\newcommand{\TT}{\mathbf{T}}
\DeclareMathOperator{\im}{im}
\DeclareMathOperator{\re}{Re}
\DeclareMathOperator{\tr}{Tr}
\DeclareMathOperator{\diam}{diam}
\DeclareMathOperator{\Rm}{Rm}
\DeclareMathOperator{\Ric}{Ric}
\DeclareMathOperator{\RRic}{\mathbf{Ric}}
\DeclareMathOperator{\RRm}{\mathbf{Rm}}
\DeclareMathOperator{\RR}{\mathbf{R}}
\DeclareMathOperator{\SO}{SO}
\DeclareMathOperator{\SU}{SU}
\DeclareMathOperator{\GL}{GL}
\DeclareMathOperator{\Vol}{Vol}
\newcommand{\dd}{\mathrm{d}}
\newcommand{\R}{\mathbb R}
\newcommand{\Z}{\mathbb Z}
\newcommand{\N}{\mathbb N}
\newcommand{\diff}{\mathrm{d}}
\newcommand{\del}{\partial}
\newcommand{\dvol}{\mathrm{dvol}}
\renewcommand{\tilde}{\widetilde}
\theoremstyle{plain}
	\newtheorem{theorem}{Theorem}
	\newtheorem{proposition}[theorem]{Proposition}
	\newtheorem{lemma}[theorem]{Lemma}
	\newtheorem{corollary}[theorem]{Corollary}
	\newtheorem{conjecture}[theorem]{Conjecture}
	\newtheorem{question}[theorem]{Question}
\theoremstyle{definition}
	\newtheorem{definition}[theorem]{Definition}
	\newtheorem{remark}[theorem]{Remark}
\theoremstyle{plain}
	\newtheorem*{theorem*}{Theorem}
	\newtheorem*{proposition*}{Proposition}
	\newtheorem*{lemma*}{Lemma}
	\newtheorem*{corollary*}{Corollary}
	\newtheorem*{conjecture*}{Conjecture}
\theoremstyle{definition}
	\newtheorem*{definition*}{Definition}
	\newtheorem*{remark*}{Remark}
	\newtheorem*{remarks*}{Remarks}
\title{A report on the hypersymplectic flow}
\author[1]{Joel Fine}
\affil[1]{Département de mathématiques, Université libre de Bruxelles.}
\author[2]{Chengjian Yao}
\affil[2]{Institute of Mathematical Sciences, ShanghaiTech University.}
\date{}                     
\numberwithin{equation}{section}
\numberwithin{theorem}{section}
\begin{document}

\maketitle

\begin{center}
Written for Sir Simon Donaldson, on the occasion of his 60th birthday.
\vspace{\baselineskip}

\end{center}

\begin{abstract}
This article discusses a relatively new geometric flow, called the \emph{hypersymplectic flow}. In the first half of the article we explain the original motivating ideas for the flow, coming from both 4-dimensional symplectic topology and 7-dimensional $G_2$-geometry. We also survey recent progress on the flow, most notably an extension theorem assuming a bound on scalar curvature. The second half contains new results. We prove that a complete torsion-free hypersymplectic structure must be hyperk\"ahler. We show that a certain integral bound involving scalar curvature rules out a finite time singularity in the hypersymplectic flow. We show that if the initial hypersymplectic structure is sufficiently close to being point-wise orthogonal then the flow exists for all time. Finally, we prove convergence of the flow under some strong assumptions including, amongst other things, long time existence.
\end{abstract}


\section{Summary of contents}

The first aim of this article is to popularise a relatively new geometric flow, called the hypersymplectic flow. The second aim is to explain some new results about extension and convergence of this flow. In \S\ref{motivation} we give the background motivation for hypersymplectic stuctures, coming from 4-dimensional symplectic topology and 7-dimensional $G_2$-geometry. In \S\ref{hypersymplectic-flow-to-date} we summarise what is already known about the hypersymplectic flow. We do this partly for context but also because several of the equations and arguments discussed here will be reused later. The new results are proved in \S\ref{new-results}. They include the following:
\begin{itemize}
\item
Theorem~\ref{torsion free-hyperkahler}: a  torsion-free complete hypersymplectic structure is hyperk\"ahler.
\item
Theorem~\ref{extendible}: an extension result for the hypersymplectic flow under an integral bound on the torsion.
\item
Theorem~\ref{C0-long-time-existence}: if the initial hypersymplectic structure is sufficiently close (in $C^0$) to being pointwise orthogonal then the hypersymplectic flow exists for all time. 
\item
Theorems~\ref{first-convergence} and~\ref{second-convergence}: assuming the hypersymplectic flow exists for all time, then certain geometric bounds imply the flow converges to a hyperk\"ahler structure. 
\end{itemize}

\subsection*{Acknowledgements}

JF was supported by ERC consolidator grant 646649 ``SymplecticEinstein''.
\section{Motivation}\label{motivation}

\subsection{Hypersymplectic structures} 

One starting point for the study of hypersymplectic structures comes from the following folklore conjecture in 4-dimensional symplectic topology.

\begin{conjecture}
\label{folklore-conjecture}
Let $M$ be a compact 4-manifold and $\omega$ a symplectic form on $M$ with $c_1(M,\omega) = 0$. If  $\pi_1(M) = 0$ or $b_+(M)=3$ then there exists an integrable almost complex structure $J$ on $M$ which is compatible with $\omega$, making $(M,J,\omega)$ a K\"ahler surface.
\end{conjecture}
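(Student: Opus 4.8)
The plan is to carry out the program, going back to Donaldson, for attacking this conjecture by means of the hypersymplectic flow. The first move is to reduce both hypotheses to the single condition $b_+(M)=3$. By the classification of symplectic Calabi--Yau surfaces, a compact symplectic $4$-manifold with $c_1(M,\omega)=0$ and $\pi_1(M)=0$ is homeomorphic to a K3 surface and so already has $b_+=3$; I may therefore assume $b_+(M)=3$ from the outset. Since $\int_M\omega^2>0$, the class $[\omega]$ lies in the positive cone of the intersection form, and as $b_+=3$ I can extend it to a basis $a_1=[\omega],a_2,a_3$ of a maximal positive-definite subspace $H^+\subset H^2(M;\R)$, so that the Gram matrix $(a_i\cdot a_j)$ is positive definite.

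The second move is to represent these classes by an honest hypersymplectic triple $\underline\omega=(\omega_1,\omega_2,\omega_3)$ with $\omega_1=\omega$ and $[\omega_i]=a_i$: one fixes closed representatives of $a_2,a_3$ and, using that hypersymplecticity is an open condition while $(a_i\cdot a_j)$ is already positive definite, perturbs them so that the pointwise matrix $(\omega_i\wedge\omega_j)/\dvol$ becomes positive definite. I expect this step to be genuinely delicate, since promoting the cohomological positivity of $(a_i\cdot a_j)$ to pointwise positivity of $\underline\omega$ is not formal; it is essentially Donaldson's question of whether a maximal positive subspace is represented by a hypersymplectic triple.

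The third move is to run the hypersymplectic flow from $\underline\omega$. The flow preserves each cohomology class $[\omega_i]=a_i$ and keeps the triple hypersymplectic, so along the flow $\omega_1(t)$ is a path of symplectic forms all lying in the fixed class $a_1$. Assuming long-time existence and invoking the convergence results of this article (Theorem~\ref{C0-long-time-existence} together with Theorems~\ref{first-convergence} and~\ref{second-convergence}), the flow converges to a torsion-free, hence by Theorem~\ref{torsion free-hyperkahler} hyperk\"ahler, triple $(\hat\omega_1,\hat\omega_2,\hat\omega_3)$. Writing $\hat J$ for the complex structure making $\hat\omega_1$ a K\"ahler form, Moser's theorem applied to the path $\omega_1(t)$ from $\omega$ to $\hat\omega_1$ produces a diffeomorphism $\phi$ with $\phi^*\hat\omega_1=\omega$; then $J:=\phi^*\hat J$ is integrable and compatible with $\omega$, so $(M,J,\omega)$ is K\"ahler.

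The principal obstacle is the analytic core of the third move: proving long-time existence and convergence of the flow \emph{without} the strong hypotheses assumed in the quoted theorems. One must rule out finite-time singularities and control the flow uniformly, and for this the extension criterion (Theorem~\ref{extendible}) and the near-orthogonal existence result (Theorem~\ref{C0-long-time-existence}) are only partial tools. Understanding, and then excluding, the singularities of the hypersymplectic flow in general is exactly the point at which this strategy, and hence the conjecture, remains out of reach.
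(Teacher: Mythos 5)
The statement you are asked to prove is a \emph{conjecture}: the paper offers no proof of it and says explicitly that it ``seems currently out of reach.'' What you have written is not a proof but a reconstruction of the motivating program (which does match the paper's motivation section: pass to a hypersymplectic triple, run the flow, obtain a hyperk\"ahler limit, and pull back the limiting complex structure by the Moser diffeomorphism --- the last step is exactly how the paper deduces Conjecture~\ref{folklore-conjecture} from Conjecture~\ref{skd-conjecture} in the hypersymplectic case). Your first move, reducing $\pi_1=0$ to $b_+=3$ via Morgan--Szab\'o/Bauer/Li and Freedman, is sound. But the remaining two moves each contain a gap that is itself an open problem, so the argument does not close.

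Concretely: in your second move, the passage from a cohomologically positive-definite triple of classes to a \emph{pointwise} positive-definite triple of closed forms is not a perturbation argument at all. Openness of the hypersymplectic condition only helps if you already have a triple that is close to hypersymplectic; arbitrary closed representatives of $a_2,a_3$ will in general have $(\omega_i\wedge\omega_j)/\mu$ degenerate or indefinite on large regions, and no amount of small perturbation within the fixed cohomology classes fixes that. Whether every maximal positive subspace of $H^2$ on such an $M$ is represented by a hypersymplectic triple is precisely Donaldson's open question, and without it your strategy does not even apply to the given $\omega$. In your third move, the theorems you invoke do not apply to a general initial triple: Theorem~\ref{C0-long-time-existence} requires the pointwise near-orthogonality $\tr Q(0)<3+\epsilon_0$, and Theorems~\ref{first-convergence} and~\ref{second-convergence} require uniform bounds on $\tr Q$, $\Lambda$ or $|\TT|^2$, diameter and Ricci that are not known to hold along the flow; assuming them is assuming the hard part. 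You acknowledge both gaps, which is to your credit, but the conclusion is that this is a statement of the open problem and of the intended line of attack, not a proof --- and indeed the paper contains no proof for you to be compared against.
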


We make the following remarks:
\begin{enumerate}
\item 
The conditions $\pi_1(M)=0$ or $b_+(M)=3$ are necessary. The Kodaira--Thurston manifold \cite{kodaira,thurston} is a homogenous compact symplectic 4-manifold, with $c_1=0$; indeed the tangent bundle is symplectically trivial. It has no K\"ahler metric, since $b_1=3$ whereas K\"ahler manifolds have $b_1$ even. It has $\pi_1 \neq 0$ and $b_+ = 2$. 
\item
Conjecture~\ref{folklore-conjecture}, if true, describes a purely 4-dimensional phenomenon. There are infinitely many simply-connected compact symplectic 6-manifolds with $c_1=0$ yet which admit no compatible complex structure \cite{fine-panov2,fine-panov3,Ak}.
\item 
The only compact K\"ahler surfaces with $c_1=0$ are complex tori or K3 surfaces. So one consequence of the conjecture is that $M$ is either diffeomorphic to $\mathbb{T}^4$ or the real manifold underlying K3 surfaces.

The best that is known in this direction is that under the hypotheses of Conjecture~\ref{folklore-conjecture}, if $\pi_1(M) = 0$ then $M$ is \emph{homeomorphic} to  a K3 surface. This follows from work of Morgan--Szab\'o, Bauer and Li to determine the integral homology of $M$ \cite{MSa, Ba,Li}, and then the celebrated work of Freedmann \cite{freedman} to determine the homeomorphism type. 

Of course, the jump from homeomorphism to diffeomorphism is large in dimension $4$. Moreover, even if we assume $M$ is diffeomorphic to a K3 surface or $\mathbb{T}^4$ it is still unknown whether or not $\omega$ can be made into a K\"ahler metric. 
\end{enumerate}

Conjecture~\ref{folklore-conjecture} seems currently out of reach. To gain a foothold, Donaldson has formulated a simpler conjecture, which may turn out to be more tractable. To state it we first need the definition of a hypersymplectic structure. 

\begin{definition}
Let $M$ be a 4-manifold. A triple $\underline{\omega}=(\omega_1,\omega_2,\omega_3)$ of symplectic forms is called a \emph{hypersymplectic structure} on $M$ if any non-zero linear combination $a_1 \omega_1 +a_2 \omega_2 + a_3 \omega_3$ of the forms is again a symplectic form (where $(a_1, a_2, a_3 )\in \R^3\setminus\{0\}$).
\end{definition}

The obvious example, and reason for the name, is the triple of K\"ahler forms associated to a hyperk\"ahler metric. Donaldson's conjecture is that, up to isotopy and on compact manifolds, these are essentially the \emph{only} examples. 

\begin{conjecture}[Donaldson \cite{D09}]
\label{skd-conjecture}
Let $\underline{\omega}$ be a hypersymplectic structure on a compact 4-manifold $M$. Then there is an isotopy $\underline{\omega}(t)$ of cohomologous hypersymplectic structures for $0 \leqslant t \leqslant1$, taking $\underline{\omega} = \underline{\omega}(0)$ to a triple~$\underline{\omega}(1)$ that is hyperk\"ahler. I.e., there is a hyperk\"ahler metric on $M$ for which the family of K\"ahler forms are generated by the components of $\underline{\omega}(1)$.
\end{conjecture}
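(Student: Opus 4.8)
The plan is to realise the required isotopy as the trajectory of a geometric flow, the \emph{hypersymplectic flow}, and to show that this flow carries an arbitrary hypersymplectic triple to a hyperk\"ahler one within its cohomology class. The key input, coming from the $7$-dimensional picture, is that a hypersymplectic structure $\underline{\omega}$ on $M$ determines a closed positive $3$-form
\begin{equation}
\varphi = \dd t_1 \wedge \dd t_2 \wedge \dd t_3 - \sum_{i=1}^3 \dd t_i \wedge \omega_i
\end{equation}
on $N = M \times \TT^3$, hence a $G_2$-structure whose intrinsic torsion vanishes precisely when $\underline{\omega}$ is hyperk\"ahler. I would define the hypersymplectic flow as the reduction to $M$ of Bryant's Laplacian flow $\partial_t \varphi = \Delta_\varphi \varphi = \dd \dd^*_\varphi \varphi$, which for closed $\varphi$ is weakly parabolic and has torsion-free structures as its fixed points.

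First I would establish short-time existence and uniqueness for the flow starting at the given $\underline{\omega}$, using the DeTurck trick to break the diffeomorphism gauge and render the equation genuinely parabolic. Because the flow moves $\varphi$ within $[\varphi]$, it fixes each class $[\omega_i]$ and keeps the triple hypersymplectic for as long as the induced metric stays non-degenerate; the resulting one-parameter family is then exactly an isotopy of cohomologous hypersymplectic structures, as the conjecture demands. It remains to show (i) the flow exists for all time, and (ii) it converges as $t \to \infty$ to a torsion-free limit, which by Theorem~\ref{torsion free-hyperkahler} is hyperk\"ahler.

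For long-time existence the strategy is to rule out finite-time singularities by a priori estimates: one would control the torsion along the flow and invoke the extension criterion of Theorem~\ref{extendible}, noting that Theorem~\ref{C0-long-time-existence} already delivers the conclusion when the initial triple is $C^0$-close to orthogonal. For convergence one would verify, once long-time existence and the accompanying geometric bounds are in hand, the hypotheses of Theorems~\ref{first-convergence} and~\ref{second-convergence}, thereby identifying the limit as a hyperk\"ahler structure and closing the argument.

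The hard part will be the a priori estimates underlying both (i) and (ii). The induced metric can in principle degenerate in finite time (the matrix pairing the $\omega_i$ losing positive-definiteness), or curvature can concentrate, and in general neither a uniform non-collapsing bound nor the integral torsion control required by Theorem~\ref{extendible} is known. Establishing these estimates for an arbitrary compact hypersymplectic $4$-manifold --- equivalently, a singularity-formation theory for the $G_2$ Laplacian flow in this highly symmetric setting --- is precisely the open heart of the conjecture, and is exactly where the present partial results, conditioned on scalar-curvature or $C^0$ hypotheses, stop short of the general case.
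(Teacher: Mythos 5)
This statement is a \emph{conjecture} (due to Donaldson), and the paper does not prove it; on the contrary, the authors describe it as the ``as yet unfulfilled'' goal motivating their entire program. Your proposal is therefore not a proof but a restatement of that program: you correctly identify the intended mechanism (lift $\underline{\omega}$ to a closed $G_2$-structure on $M \times \mathbb{T}^3$, run the Laplacian flow, observe it descends to a flow of cohomologous hypersymplectic triples, and hope to converge to a torsion-free, hence hyperk\"ahler, limit), and this matches the paper's strategy exactly. But every step beyond short-time existence is conditional, and you acknowledge as much in your final paragraph.

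To be concrete about where the argument fails to close: Theorem~\ref{extendible} requires $\int_0^s \sup_M |\TT|^2\,\dd t < \infty$, and no such integral torsion bound is known for a general initial triple; Theorem~\ref{FY} requires bounded scalar curvature, likewise not known a priori; Theorem~\ref{C0-long-time-existence} applies only when $\tr Q(0) < 3 + \epsilon_0$, i.e.\ when the initial forms are already nearly pointwise orthogonal; and the convergence results, Theorems~\ref{first-convergence} and~\ref{second-convergence}, assume long-time existence \emph{plus} uniform bounds on $\tr Q$ together with either $\Lambda(\phi(t))$ or a Ricci lower bound, diameter bound and torsion bound --- none of which are established unconditionally. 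A proof of the conjecture would require supplying exactly these missing a priori estimates (or a wholly different argument), so what you have written is a correct description of the state of the art rather than a proof of the statement.
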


Donaldson's Conjecture is a special case of Conjecture~\ref{folklore-conjecture}. First note that the hypotheses of Conjecture~\ref{skd-conjecture} imply those of Conjecture~\ref{folklore-conjecture}.

\begin{lemma}
If $\underline{\omega}$ is a hypersymplectic structure on $M$ then $c_1(M,\omega_1) =0$. Moreover, if $M$ is compact then $b_+(M)=3$.
\end{lemma}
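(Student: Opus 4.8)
The plan is to treat the two assertions separately: I extract pointwise linear-algebraic data from the triple to handle $c_1$, and I combine Hodge theory with gauge-theoretic input for $b_+$.

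For $c_1(M,\omega_1)=0$, I would first record the symmetric matrix of top-degree forms $\omega_i\wedge\omega_j=Q_{ij}\,\mu$, where $\mu$ is any fixed volume form. The hypersymplectic condition says that $(a_1\omega_1+a_2\omega_2+a_3\omega_3)^2=Q(a,a)\,\mu$ is nowhere zero for $a\neq0$, so $Q$ is a definite matrix at every point; fixing the orientation we may take it positive-definite. Pointwise Gram--Schmidt then produces a smooth triple $\hat\omega_i=\sum_jA_{ij}\omega_j$ with $\hat\omega_i\wedge\hat\omega_j=\delta_{ij}\,\dvol$. This is Donaldson's linear algebra of definite triples \cite{D09}: such a normalised triple determines a Riemannian metric $g$ for which the $\hat\omega_i$ are a pointwise orthonormal basis of self-dual $2$-forms, and hence an almost complex structure $J$, compatible with $g$, whose fundamental form is $\hat\omega_1$ and for which $\hat\omega_2+i\hat\omega_3$ has type $(2,0)$. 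Since $(\hat\omega_2+i\hat\omega_3)\wedge\overline{(\hat\omega_2+i\hat\omega_3)}=\hat\omega_2^2+\hat\omega_3^2=2\,\dvol\neq0$, this $(2,0)$-form is nowhere vanishing and trivialises the canonical bundle $K_J=\Lambda^{2,0}_J$, so $c_1(TM,J)=-c_1(K_J)=0$. Finally, because the space of positive-definite symmetric matrices is contractible, the normalising matrices $A(x)$ deform to the identity, so $J$ lies in the homotopy class of almost complex structures determined by $\omega_1$; hence $c_1(M,\omega_1)=c_1(TM,J)=0$.

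For $b_+\geqslant3$, integrating gives the cup-product pairing $[\omega_i]\cdot[\omega_j]=\int_M\omega_i\wedge\omega_j=\int_MQ_{ij}\,\mu$, and since $Q$ is positive-definite pointwise and $\mu>0$, this Gram matrix is positive-definite. Thus $[\omega_1],[\omega_2],[\omega_3]$ are linearly independent and span a $3$-dimensional subspace of $H^2(M;\R)$ on which the intersection form is positive-definite, whence $b_+\geqslant3$.

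The main obstacle is the reverse inequality $b_+\leqslant3$; this is where the soft arguments stop. Although the $\hat\omega_i$ (or, with Donaldson's metric, the closed self-dual $\omega_i$ themselves) furnish three harmonic self-dual forms, there is no pointwise obstruction to further harmonic self-dual forms, so one cannot bound $\dim\mathcal H^+$ from above by linear algebra alone. Instead I would invoke Seiberg--Witten theory: by the first part the symplectic manifold $(M,\omega_1)$ has torsion canonical class $K=-c_1=0$, and we already know $b_+\geqslant3>1$, so Taubes' theorem gives that the canonical $\mathrm{Spin}^c$ structure has Seiberg--Witten invariant $\pm1$. Feeding the constraint $K=0$ into the resulting structure of the basic classes --- exactly the input used by Morgan--Szab\'o, Bauer and Li \cite{MSa,Ba,Li} to pin down the homology of such manifolds --- forces $b_+=3$. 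I would present this last step by citing those results; supplying a self-contained gauge-theoretic argument is the genuinely hard part, and the only point where analysis beyond the pointwise structure of the triple is needed.
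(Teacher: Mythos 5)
Your argument is correct and takes essentially the same route as the paper: your Gram--Schmidt normalisation producing $\hat\omega_2+i\hat\omega_3$ as a nowhere-vanishing $(2,0)$-form is exactly the paper's nowhere-vanishing section of $K_J=\{\chi\in\left\langle\omega_1,\omega_2,\omega_3\right\rangle : \omega_1\wedge\chi=0\}$, and both arguments get $b_+\geqslant 3$ from positive-definiteness of the wedge product on the span of the $\omega_i$ and then delegate $b_+\leqslant 3$ to Bauer's Seiberg--Witten theorem for symplectic $4$-manifolds with $c_1=0$. No issues.
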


\begin{proof}[Sketch of proof]
We begin with $c_1(M,\omega_1) =0$. Let $J$ be an almost complex structure on $M$ compatible with $\omega_1$ and $g(\cdot, \cdot) = \omega_1(J \cdot, \cdot)$ the corresponding Riemannian metric. The bundle of self-dual 2-forms of $g$ has the form $\Lambda^+ =\left\langle \omega_1 \right\rangle \oplus K_J$ where $K_J \subset \Lambda^2$ is the bundle of 2-forms which are the real parts of $(2,0)$-forms with respect to $J$. Conversely, given any rank~2 subbundle $K \subset \Lambda^2$ such that the wedge product is positive definite on $\left\langle  \omega_1\right\rangle\oplus K$ then there is an almost complex structure $J$, compatible with $\omega_1$, such that $K=K_J$. 

Let $K = \{ \chi \in \left\langle  \omega_1, \omega_2, \omega_3 \right\rangle : \omega_1 \wedge \chi = 0\}$. Now $\left\langle \omega_1 \right\rangle\oplus K = \left\langle \omega_1, \omega_2, \omega_3 \right\rangle$; by the definition of a hypersymplectic structure, the wedge product is positive definite here. It follows that $K =K_J$ for some almost complex $J$, compatible with $\omega_1$. It is now a simple matter to write down a nowhere vanishing section of $K_J$, showing that $c_1(M,J) = 0$. 

To see that when $M$ is compact  $b_+(M)=3$, first note that the $J$ described above gives a metric $g$ for which $\omega_1, \omega_2, \omega_3$ are all closed self-dual 2-forms. This shows that $b_+(M) \geqslant3$. Meanwhile a result of Bauer \cite{Ba} shows that for a symplectic 4-manifold with $c_1=0$, $b_+(M) \leqslant 3$. (Note that Bauer's Theorem is relatively deep, relying on Seiberg--Witten theory. It would be interesting to know if there was a more direct proof that the existence of a hypersymplectic structure forces $b_+(M)=3$.)
\end{proof}

Next we explain why Donaldson's Conjecture implies Conjecture~\ref{folklore-conjecture} in the case of hypersymplectic structures. Let $(M, \underline{\omega})$ be a compact hypersymplectic 4-manifold and let $\underline{\omega}(t)$ be an isotopy of cohomologous hypersymplectic structures starting at this given one and ending at a hyperk\"ahler triple $\underline{\omega}(1)$. There is certainly an integrable complex structure $J$ compatible with $\omega_1(1)$. Meanwhile Moser's argument applied to the path of cohomologous symplectic forms $\omega_1(t)$ gives the existence of a diffeomorphism $f$ with $f^* \omega_1(1) = \omega_1(0)$. It follows that $(M, f^*J, \omega_1(0))$ a K\"ahler surface. 
\vspace{\baselineskip}

We now turn to a geometric flow of hypersymplectic structures, called the \emph{hypersymplectic flow}, which we hope will ultimately lead to an isotopy $\underline{\omega}(t)$ proving Donaldson's Conjecture. 

\subsection{A brief primer on $G_2$-structures}

The hypersymplectic flow has its origins in $G_2$ geometry. In this section we quickly review the required parts of the study of $G_2$-structures. For more details and justifications, see \cite{Bryant,LW1}.

Let $X$ be an oriented 7-manifold and $\phi \in \Omega^3(X)$ a 3-form. Using $\phi$ we can define a symmetric bilinear form $\beta_\phi$ on $X$ with values in $\Lambda^7T^*X$ as follows:
\begin{equation}
\beta_\phi(u,v) = \frac{1}{6}\iota_u \phi \wedge \iota_v \phi \wedge \phi
\end{equation}
\begin{definition}
The 3-form $\phi$ is called a \emph{$G_2$-structure on $X$} if $\beta_\phi$ is positive definite. More precisely, if $\nu$ is any choice of positive nowhere vanishing 7-form on $X$, then $\phi$ is a $G_2$-structure if $\beta_\phi/\nu$ defines a Riemannian metric on $X$.
\end{definition}

When $\phi$ is a $G_2$-structure, there is in fact a canonical choice of positive 7-form on $X$. To explain this, note that given any nowhere vanishing 7-form $\nu$, we obtain a metric $g_\nu = \beta_\phi/\nu$. This metric has, in turn, a metric volume form (a positive 7-form of unit length with respect to $g_\nu$) which we denote by $\dvol(\nu)$ to indicate its dependence on $\nu$. By considering the effect of scaling $\nu$, one sees that there is a unique choice of $\nu$ for which $\dvol(\nu) = \nu$. In this way a $G_2$-structure induces a distinguished 7-form and hence metric on $X$. We write these structures $\nu_\phi$ and $g_\phi$ respectively. 

The prototype of a $G_2$-structure comes from the octonians. Identify $\R^7$ with the vector space of imaginary octonians and define a 3-form $\phi_0$ on $\R^7$ by $\phi_0(u,v,w) = \left\langle \im(u \times v),w \right\rangle$. Here $u \times v$ is the octonian product of $u$ and $v$, whilst $\left\langle \cdot, \cdot \right\rangle$ denotes the Euclidean inner product. The 3-form $\phi_0$ is the 7-dimensional analogue of the vector triple product in $\R^3$. The resulting metric $g_{\phi_0}$ on $\R^7$ is just the standard Euclidean metric we started with. It turns out that, up to the action of $\GL^+(7,\R)$, $\phi_0$ is the unique linear $G_2$-structure on $\R^7$. The stabiliser of $\phi_0$ is isomorphic to the exceptional Lie group $G_2$, which in this way arises as a subgroup $G_2 \subset \SO(7)$. It is from here that $G_2$-structures get their name. 

The interest in $G_2$-structures comes from the search for 7-dimensional Riemannian manifolds with holonomy $G_2$. 

\begin{definition}
A $G_2$-structure $\phi$ is said to be \emph{torsion free}	if $\nabla \phi = 0$. Here, $\nabla$ is the Levi-Civita connection of the metric $g_\phi$, determined by $\phi$. (In particular, this equation is non-linear in $\phi$.)

If $\phi$ is torsion free, then the holonomy along loops based at $x \in X$ must preserve $\phi(x)$. It follows that the holonomy group is isomorphic to a subgroup of $G_2$. (We remark in passing that the term ``torison free'' comes from the language of $G$-structures.)
\end{definition}

One of the reasons to be interested in metrics with $\nabla \phi = 0$ is that they are automatically \emph{Ricci flat}.  This is similar to Calabi--Yau metrics, i.e., K\"ahler metrics with a parallel holomorphic volume form, which have holonomy $\SU(n)$. Again the constant differential form forces the Ricci curvature to vanish. The analogy is particularly close in real dimension~6: if $Z$ is a Calabi--Yau threefold with K\"ahler form $\omega$ and holomorphic volume form $\Omega$ then $Z \times S^1$ carries a torsion free $G_2$-structure, $\phi = \omega \wedge \diff \theta + \re \Omega$. In terms of holonomy, this corresponds to the subgroup $\SU(3) \subset G_2$ of elements which fix a given direction in $\R^7$.

It turns out that the equation $\nabla \phi = 0$ is equivalent to a system of equations which is at first sight less restrictive.

\begin{proposition} (See, for example, \cite{Bryant,LW1}.)
A $G_2$-structure $\phi$ is torsion free if and only if 
\begin{equation}
\diff \phi = 0\quad \text{ and } \quad \diff^*\phi = 0
\end{equation}
(Here the codifferential $\diff^*$ is defined via the metric $g_\phi$ determined by $\phi$ and so, again, this equation is non-linear in $\phi$.)
\end{proposition}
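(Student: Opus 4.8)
The plan is to prove the two implications separately; the forward one is immediate and the converse carries all the content. For the easy direction, suppose $\nabla\phi=0$, where $\nabla$ is the Levi--Civita connection of $g_\phi$. Since the exterior derivative is the antisymmetrisation of $\nabla$ and the codifferential $\diff^*$ is, up to sign, a metric trace of $\nabla$, both $\diff\phi$ and $\diff^*\phi$ are pointwise algebraic expressions in $\nabla\phi$ and hence vanish.

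The key observation for the converse is that $\nabla\phi$ is exactly the \emph{intrinsic torsion} of the $G_2$-structure. Because $\nabla$ preserves $g_\phi$, for each tangent vector $e$ the tensor $\nabla_e\phi$ lies in the image of $\so(T_xX)=\Lambda^2T_x^*X$ acting infinitesimally on $\phi$. The stabiliser of $\phi$ is $G_2$, and its Lie algebra $\mathfrak{g}_2\subset\so(7)$ annihilates $\phi$; hence only the $7$-dimensional complement $\mathfrak{g}_2^\perp$ acts effectively, and $\mathfrak{g}_2^\perp\cong\R^7$ as a $G_2$-module. Thus $\nabla\phi$ is a section of $T^*X\otimes\mathfrak{g}_2^\perp$, i.e. pointwise an element of $\R^7\otimes\R^7$, and $\nabla\phi=0$ if and only if this tensor is zero. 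I would therefore aim to show that $\diff\phi=0$ and $\diff^*\phi=0$ force the torsion to vanish.

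Next I would invoke representation theory. Decompose $\R^7\otimes\R^7=\R\oplus\R^7\oplus\mathfrak{g}_2\oplus S^2_0\R^7$ into irreducible $G_2$-modules of dimensions $1,7,14,27$; this splits the torsion into four classes $(\tau_0,\tau_1,\tau_2,\tau_3)$. On the other hand $\diff\phi$ and $\diff({*}\phi)$ are first-order $G_2$-equivariant expressions in $\nabla\phi$, taking values in $\Omega^4$ and $\Omega^5$. Using the Hodge isomorphisms $\Lambda^4\cong\Lambda^3\cong\R\oplus\R^7\oplus S^2_0\R^7$ and $\Lambda^5\cong\Lambda^2\cong\R^7\oplus\mathfrak{g}_2$, one sees that $\diff\phi$ carries the $\R$, the $S^2_0\R^7$ and a copy of the $\R^7$ summand, while $\diff({*}\phi)$ carries $\mathfrak{g}_2$ and another copy of $\R^7$. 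Between them all four irreducible types occur, so no torsion class is invisible to the pair $(\diff\phi,\diff^*\phi)$.

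Finally, by Schur's lemma the $G_2$-equivariant map $\nabla\phi\mapsto(\diff\phi,\diff({*}\phi))$ acts as a scalar on each irreducible summand, so the whole question reduces to checking that none of these scalars vanishes. This is the genuine content of the proposition and the step I expect to be the main obstacle, since a priori an equivariant map may kill a summand. I would settle it by deriving the classical structure equations
\[
\diff\phi=\tau_0\,{*\phi}+3\,\tau_1\wedge\phi+{*\tau_3},\qquad
\diff({*}\phi)=4\,\tau_1\wedge{*\phi}+\tau_2\wedge\phi,
\]
in which every $\tau_i$ appears with a nonzero coefficient. Since these coefficients are universal $G_2$-invariant constants, it suffices to compute them once, for instance by differentiating explicit one-parameter families of $G_2$-structures on the flat model $(\R^7,\phi_0)$ that realise each torsion type. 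Granting the four coefficients are nonzero, $\diff\phi=0$ together with $\diff^*\phi=0$ (equivalently $\diff({*}\phi)=0$) gives $\tau_0=\tau_1=\tau_2=\tau_3=0$, hence $\nabla\phi=0$, which is the converse.
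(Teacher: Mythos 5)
The paper does not prove this proposition --- it simply cites \cite{Bryant,LW1} --- and your argument is precisely the standard Fern\'andez--Gray proof found in those references: identify $\nabla\phi$ with the intrinsic torsion in $T^*X\otimes\mathfrak{g}_2^\perp\cong\R^7\otimes\R^7$, decompose into the irreducible pieces of dimensions $1,7,14,27$, and read off from Bryant's structure equations $\diff\phi=\tau_0\,{*\phi}+3\tau_1\wedge\phi+{*\tau_3}$ and $\diff({*}\phi)=4\tau_1\wedge{*\phi}+\tau_2\wedge\phi$ that every torsion class appears with a nonzero universal coefficient. This is correct and essentially the same approach as the cited sources, with the only outstanding work being the one-time verification of those coefficients, which you rightly flag as the computational crux.
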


In one direction this is obvious, the real strength of the Proposition is that $\phi$ being closed and coclosed is enough to force $\nabla \phi = 0$. Again one can draw an analogy with K\"ahler geometry: if the associated $(1,1)$-form $\omega$ of a Hermitian metric is closed then (using integrability of the complex structure) it follows that $\nabla \omega = 0$. 

This description of torsion free $G_2$-structures leads to the following natural question.

\begin{question}\label{G2-question}
Let $\phi$ be a closed $G_2$-structure. Does there exist a cohomologous torsion free $G_2$-structure $\psi \in [\phi]$?
\end{question}

Pushing the analogy with Calabi--Yau threefolds still further, one might think of this as a little similar to the Calabi--Yau theorem: given a complex manifold with trivial canonical bundle and a K\"ahler class $\kappa$, Yau proved that there is a unique K\"ahler form $\omega \in \kappa$ for which the holomorphic volume forms are parallel \cite{Yau}. We should stress, however, that there are important differences between $G_2$ and Calabi--Yau geometries. In particular, there are closed $G_2$-structures on compact 7-manifolds which admit no torsion free $G_2$-structure \cite{Fer}, and so the strongest ``$G_2$ analogue'' of Yau's theorem is false.

\subsection{From hypersymplectic to $G_2$}

In \cite{Donaldson} Donaldson observes how to turn a hyperpsymplectic structure into a $G_2$-structure (and conversely, certain 3-dimensional families of hypersymplectic structures are one of the key pieces of data in the description of general closed $G_2$-structures with co-associative fibrations). Let $M$ be a 4-manifold with hypersymplectic structure $\underline{\omega} = (\omega_1, \omega_2, \omega_3)$ and consider the following 3-form $\phi$ on $M \times \mathbb{T}^3$, where $\mathbb{T}^3$ is the 3-torus with angular coordinates $(t^1,t^2,t^3)$:
\begin{equation}\label{toric-G2}
\phi = \diff t^{123} - \diff t^1 \wedge \omega_1 - \diff t^2 \wedge \omega_2 - \diff t^3 \wedge \omega_3
\end{equation}
The fact that $\underline{\omega}$ is hypersymplectic implies $\phi$ is a closed $G_2$-structure. (See, for example, Lemma~2.2 of \cite{FY}). 

The metric $g_\phi$ on $M \times \mathbb{T}^3$ has a purely 4-dimensional description. Firstly, we define a volume form $\mu_{\underline{\omega}}$ on $M$ as follows. Given any nowhere vanishing positive 4-form $\mu$ on $M$, consider the symmetric 3-by-3 matrix
\begin{equation}
Q(\mu)_{ij} = \frac{\omega_i \wedge \omega_j}{2\mu}
\label{Q-definition}
\end{equation}
We let $\mu_{\underline{\omega}}$ be the unique positive 4-form for which $\det Q = 1$. We can now define a Riemannian metric $g_{\underline{\omega}}$ on~$M$:
\begin{equation}
g_{\underline{\omega}}(u,v) = \frac{1}{6}
\frac{ \epsilon^{ijk} 
\iota_u \omega_i \wedge \iota_v \omega_j \wedge \omega_k}{\mu_{\underline{\omega}}}
\end{equation}
(Here and throughout we use the summation convention that repeated indices are summed over 1,2,3; moreover $\epsilon^{ijk}$ is the sign of the permutaion $(ijk)$). One can check that this symmetric bilinear form is indeed a Riemmanian metric, because $\underline{\omega}$ is hypersymplectic. Moreover, the forms $\omega_i$ are actually self-dual with respect to $g_{\underline{\omega}}$, and $Q_{ij} =\frac{1}{2}\langle\omega_i, \omega_j\rangle$ is (half of) the matrix of their  $g_{\underline{\omega}}$-inner-products. Finally, the 7-dimensional metric $g_\phi$ on $M \times \mathbb{T}^3$ is related to $g_{\underline{\omega}}$ and $Q$ by 
\begin{equation}
g_{\phi} =  g_{\underline{\omega}} + Q_{ij} \diff t^i \otimes \diff t^j
\end{equation}
(where we have abused notation in writing $g_{\underline{\omega}}$ for the pull-back of this metric from $M$ to $M \times \mathbb{T}^3$). The proofs of all of these assertions can be found in \cite{FY}.

To summarise, a hypersymplectic structure $\underline{\omega}$ determines two key pieces of data: a Riemannian metric $g_{\underline{\omega}}$, which makes $\underline{\omega}$ self-dual, and a matrix valued function $Q$, given by the inner-products of the components of $\underline{\omega}$, taking values in the space of positive definite matrices with determinant~1. If we interpret $Q$ as a family of (flat) metrics on $\mathbb{T}^3$ parametrised by $M$ then, together with $g_{\underline{\omega}}$, we obtain a metric on $M \times \mathbb{T}^3$ which is induced by the closed $G_2$-structure $\phi$ as defined above in \eqref{toric-G2}.

We are now in position to interpret Donaldson's Conjecture~\ref{skd-conjecture} in terms of Question~\ref{G2-question}. If $\underline{\omega}(t)$ is a path of cohomologous hypersymplectic structures ending at a hyperk\"ahler triple, then the corresponding $G_2$-structures $\phi(t)$ are cohomologous and end at a torsion free $G_2$-structure. Indeed, the holonomy of $M \times \mathbb{T}^3$ lies in $\SU(2) \subset G_2$, the subgroup of $G_2$ which fixes a copy of the imaginary quaternions inside the imaginary octonians. From this perspective, Donaldson's Conjecture asserts that, for those closed $G_2$-structures coming from hypersymplectic structures, you can always deform them to be torsion free. 

We close this subsection with a comment on notation. Given a hypersymplectic structure $\underline{\omega}$ we now have two different Riemannian manifolds in play, the 4-dimensional $(M,g_{\underline{\omega}})$ and the 7-dimensional $(M \times \mathbb{T}^3, g_{\phi})$. To help distinguish between them we will use bold characters for all 7-dimensional geometric quantities, for example writing $\bm{g} = g_{\phi}$ for the 7-dimensional metric and $\mathbf{Rm}$ for its curvature, whilst $g = g_{\underline{\omega}}$ and $\Rm$ denote the 4-dimensional metric and curvature.

\subsection{The $G_2$-Laplacian flow}\label{background}

The hypersymplectic flow is a special case of a flow defined for general $G_2$-structures, called the \emph{$G_2$-Laplacian flow}. This flow was introduced independently by Bryant and Hitchin \cite{Bryant,Hitchin} as a way to approach Question~\ref{G2-question}. A path of closed $G_2$-structures $\phi(t)$ evolves according to the $G_2$-Laplacian flow if 
\begin{equation}
\frac{\del \phi}{\del t} = \Delta \phi
\label{G2-flow}
\end{equation}
Here, $\Delta = \Delta_{g_{\phi(t)}}$ is the Hodge Laplacian on 3-forms determined by the metric, and hence $\phi$, at time $t$. Some remarks are in order. 
\begin{enumerate}
\item
A closed $G_2$-structure $\phi$ is a fixed point of the flow if and only if it is torsion free (This follows from the computation, equation~(6.14) in \cite{Bryant}, of the pointwise change in the volume form.) 
\item
Under the flow \eqref{G2-flow}, the cohomology class of $\phi(t)$ is constant. One might hope then that the flow will deform a given closed $G_2$-structure into a cohomologous one which is torsion free.
\item
The flow \eqref{G2-flow} is actually a gradient flow. This point of view is due to Hitchin \cite{Hitchin}. Given a closed $G_2$-structure $\phi$, write $\mathcal{V}(\phi)$ for the total volume of the associated metric. Write $\mathcal{H}$ for the space of all closed $G_2$-structures in a fixed cohomology class. Then, with a suitable Riemannian metric on $\mathcal{H}$, \eqref{G2-flow} is the gradient flow of $\mathcal{V}$. (Details of this calculation can be found in \cite{FY}.)
\end{enumerate}

The first step in studying \eqref{G2-flow} is to understand short time existence, which ultimately comes down to understanding the dependence of $\Delta$ on $\phi$. A simple observation is that this dependence means that \eqref{G2-flow} is not linear in $\phi$. More subtly, despite the fact that the Laplacian on the right-hand side is \emph{positive}, the complicated way in which $\Delta$ depends on $\phi$ means that \eqref{G2-flow} behaves like a forward heat flow, and not a backward one. The flow \eqref{G2-flow} is not genuinely parabolic, however, since it is invariant under diffeomorphisms. In \cite{BX}, Bryant and Xu showed how to gauge fix the flow to avoid this problem and hence prove short time existence. The idea is similar to DeTurck's proof of short-time existence of the Ricci flow, although the technicalities are more involved. (See also the discusion in \cite{LW1}.) 

\begin{theorem}[Bryant--Xu, \cite{BX}]
Let $X$ be a compact 7-manifold and $\phi_0$ a closed $G_2$-structure on $X$. Then there exists $\epsilon >0$ and $\phi(t)$ a path of closed $G_2$-structures for $t \in [0,\epsilon)$ solving the $G_2$-Laplacian flow \eqref{G2-flow} with $\phi(0) = \phi_0$. Moreover, the solution is unique for as long as it exists.
\end{theorem}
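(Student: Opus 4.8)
This is the $G_2$-analogue of Hamilton's short-time existence theorem for Ricci flow, and the plan is to follow DeTurck's method. The first and conceptually central step is to see why \eqref{G2-flow} is not genuinely parabolic. Linearising the map $\phi \mapsto \Delta_{g_\phi}\phi$ at a closed $G_2$-structure (this is Bryant's computation in \cite{Bryant}) one finds that, modulo lower-order terms, the principal symbol in a direction $\xi$ acts on an infinitesimal variation $\dot\phi$ like $|\xi|^2$ times a projection annihilating exactly the variations of the form $\mathcal{L}_V \phi = \diff(\iota_V \phi)$ coming from infinitesimal diffeomorphisms. In representation-theoretic terms, using the pointwise decomposition $\Lambda^3 = \Lambda^3_1 \oplus \Lambda^3_7 \oplus \Lambda^3_{27}$ induced by $\phi$, the symbol is elliptic on $\Lambda^3_1 \oplus \Lambda^3_{27}$ but degenerate on $\Lambda^3_7$, which is precisely the tangent space to the $\Diff(X)$-orbit through $\phi$. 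Thus the flow is only weakly parabolic, with the degeneracy entirely due to diffeomorphism invariance.

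To remove the degeneracy I would fix a reference $G_2$-structure $\phi_0$, with its metric and Levi-Civita connection, and build a vector field $V = V(\phi)$ out of the difference between the connection data of $g_\phi$ and that of $g_{\phi_0}$, exactly as DeTurck does for the metric in Ricci flow. One then studies the gauge-fixed flow $\partial_t\phi = \Delta_{g_\phi}\phi + \mathcal{L}_{V(\phi)}\phi$. Since $\phi$ stays closed, $\mathcal{L}_{V}\phi = \diff(\iota_V\phi)$, so the right-hand side is exact: the equation preserves closedness and the cohomology class, and being a $G_2$-structure is an open condition, hence preserved for short time. The point of the construction is that the added first-order term contributes to the symbol precisely along $\Lambda^3_7$ and cancels the degeneracy, turning the system into a strictly (quasilinear) parabolic one for the $3$-form $\phi$.

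With parabolicity in hand, short-time existence and uniqueness of a solution $\tilde\phi(t)$ of the gauge-fixed flow follow from standard theory for quasilinear parabolic systems, for instance by linearising and running a contraction-mapping or inverse-function argument in parabolic H\"older or Sobolev spaces, using Schauder or $L^p$ estimates. To return to \eqref{G2-flow} I would solve the ODE $\partial_t f_t = -V(\tilde\phi(t))\circ f_t$ with $f_0 = \Id$ for a family of diffeomorphisms $f_t$, and set $\phi(t) = f_t^* \tilde\phi(t)$. Differentiating, the fact that the time-dependent generator of $f_t$ is $-V$ cancels the Lie-derivative term, while naturality of the Hodge Laplacian, $f_t^*(\Delta_{g_{\tilde\phi}}\tilde\phi) = \Delta_{g_{f_t^*\tilde\phi}}(f_t^*\tilde\phi)$, shows that $\phi(t)$ solves $\partial_t\phi = \Delta_{g_\phi}\phi$ with $\phi(0)=\phi_0$. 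Uniqueness for the original flow does not come for free: given an arbitrary solution $\phi$ one must manufacture a family of diffeomorphisms, for example by coupling to a harmonic-map-type heat flow as in the Ricci-flow treatments of DeTurck and Hamilton, whose pull-back solves the parabolic gauge-fixed flow, and then invoke uniqueness there together with uniqueness of the ODE.

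The main obstacle is the gauge-fixing step itself. Unlike Ricci flow, where the unknown is the metric directly, here $g_\phi$ is a nonlinear algebraic function of the $3$-form $\phi$, so the symbol computation must be carried out in the $\Lambda^3_1 \oplus \Lambda^3_7 \oplus \Lambda^3_{27}$ decomposition, and one has to verify that a single DeTurck vector field restores parabolicity across all components simultaneously. This is exactly where, as noted in \cite{BX}, the technicalities are heavier than in DeTurck's original argument; the uniqueness argument described above is the second delicate point.
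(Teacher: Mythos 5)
First, note that the paper does not prove this theorem: it is quoted from Bryant--Xu \cite{BX}, with only the one-line indication that the proof is a DeTurck-type gauge fixing, ``although the technicalities are more involved'' than for Ricci flow. Your proposal is exactly that strategy, so at the level of approach you are aligned with what the paper attributes to \cite{BX}; the mechanism for passing back from the gauge-fixed flow to the original one (integrating the ODE for $f_t$, naturality of the Hodge Laplacian, and a harmonic-map-type argument for uniqueness) is also the standard and correct one.

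The genuine gap is in the central parabolicity claim. It is not true that adding $\mathcal{L}_{V(\phi)}\phi$ for a single DeTurck vector field turns $\partial_t\phi=\Delta_{g_\phi}\phi$ into a strictly parabolic quasilinear system for an arbitrary $3$-form: even after gauge fixing, the linearised operator still has degenerate directions inside $\Omega^3(X)$ and is parabolic only \emph{in the direction of closed forms}. Relatedly, the degenerate directions of the ungauged flow are not the pointwise summand $\Lambda^3_7$: the tangent space to the $\Diff(X)$-orbit consists of the forms $\diff(\iota_V\phi)$, whose principal symbol $\xi\wedge\iota_V\phi$ spans a $7$-dimensional subspace of $\Lambda^3$ that does not coincide with $\Lambda^3_7$, so the clean splitting ``elliptic on $\Lambda^3_1\oplus\Lambda^3_{27}$, degenerate on $\Lambda^3_7$'' is not how the degeneracy actually sits. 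Because parabolicity holds only along the infinite-codimension, non-pointwise subspace of closed variations, you cannot simply quote ``standard theory for quasilinear parabolic systems'' after the gauge fix; this is precisely where Bryant--Xu have to work harder than DeTurck, invoking Hamilton's Nash--Moser inverse function theorem for flows that are parabolic only in the direction of a closed subspace of sections. Your last paragraph gestures at this difficulty, but the proof as written would fail at the step ``with parabolicity in hand, short-time existence \ldots follows from standard theory''.
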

 
The $G_2$-Laplacian flow is, in fact, a close cousin of the Ricci flow. To see this, one computes the evolution of the metric $\bm{g}(t) = g_{\phi(t)}$ under the flow. As a notational convenience,  put 
\begin{equation}
\TT = - \frac{1}{2} \diff ^* \phi
\end{equation}
This 2-form is called \emph{the torsion} of $\phi$. With this in hand, we can give the evolution equation for $\bm{g}(t)$.
\begin{proposition}[Lotay--Wei \cite{LW1}, equation (3.6)]
If a closed $G_2$-structure $\phi(t)$ evolves according to the $G_2$-Laplacian flow~\eqref{G2-flow}, the corresponding Riemannian metric $\bm{g}(t) = g_{\phi(t)}$ obeys
\begin{equation}\label{metric-evolution-G2}
\frac{\del \bm{g}_{ij}}{\del t}
=
- 2 \RRic_{ij} - \frac{2}{3}|\TT|^2 g_{ij} - 4 \TT_{i}^{\phantom{i}k}\TT_{kj}
\end{equation}
\end{proposition}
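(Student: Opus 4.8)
The plan is to reduce the statement to a pointwise algebraic computation on the $G_2$-module $\Lambda^3$ combined with Bryant's formula for the Laplacian of a closed $G_2$-structure. Recall that $\phi$ induces the $G_2$-irreducible splitting $\Lambda^3 T^*X = \Lambda^3_1 \oplus \Lambda^3_7 \oplus \Lambda^3_{27}$, where $\Lambda^3_1 = \langle\phi\rangle$, $\Lambda^3_7 = \{\iota_X \psi : X \in TX\}$ with $\psi = \ast\phi$, and $\Lambda^3_{27} \cong S^2_0 T^*X$ is the image of the traceless symmetric $2$-tensors under the natural $G_2$-equivariant map $h \mapsto i_\phi(h)$ (normalised so that $i_\phi(\bm g) = 3\phi$). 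First I would establish, or quote, the linearisation of the map $\phi \mapsto g_\phi$: for any smooth family with $\del_t \phi = \dot\phi$, writing $\dot\phi = i_\phi(h) + \iota_X \psi$ for a unique symmetric $2$-tensor $h$ and vector field $X$, one has $\del_t \bm g = 2h$. This is pure representation theory, obtained by differentiating the defining relation $\beta_\phi/\nu_\phi = g_\phi$, and its upshot is that only the trace part (from $\Lambda^3_1$) and the traceless symmetric part (from $\Lambda^3_{27}$) of $\dot\phi$ affect the metric; the $\Lambda^3_7$ term $\iota_X\psi$ is invisible to $\bm g$.

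Next, since $\phi(t)$ is closed, $\dot\phi = \Delta\phi = \diff\diff^*\phi = -2\,\diff\TT$, so the task becomes to compute the $\Lambda^3_1$ and $\Lambda^3_{27}$ components of $\diff\TT$. Here I would invoke the structure equations of a closed $G_2$-structure: with $\diff\phi = 0$ one has $\diff^*\phi = \tau$ for a torsion $2$-form $\tau \in \Omega^2_{14}$ (so that $\TT = -\tfrac12\tau$), together with $\diff\psi = \tau\wedge\phi$. Bryant's computation of $\Delta\phi$ then shows that it has no $\Lambda^3_7$ component for closed $\phi$ (a consequence of $\diff^2\phi = 0$), so that $\Delta\phi = i_\phi(h)$ for a symmetric tensor $h$ whose trace part is a universal multiple of $|\TT|^2\bm g$ and whose traceless part is governed by the traceless Ricci tensor of $\bm g$ plus a quadratic torsion term.

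The constants are then pinned down by Bryant's curvature identities for closed $G_2$-structures, which give $\Scal_{\bm g} = -|\tau|^2$ up to normalisation and express the full Ricci tensor through $\nabla\tau$ and $\tau\cdot\tau$. Assembling the trace and traceless pieces and matching against $\del_t \bm g = 2h$ yields $h = -\RRic - \tfrac13|\TT|^2\bm g - 2\,\TT_i^{\phantom{i}k}\TT_{kj}$, which is exactly the claimed evolution after multiplication by $2$.

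The main obstacle is the middle step: computing $\Delta\phi = \diff\diff^*\phi$ for a closed $G_2$-structure and identifying the $\Lambda^3_{27}$ component of $\diff\TT$ with the Ricci tensor. This is the technical heart, and it relies on the full package of $G_2$ contraction identities for $\phi$ and $\psi$ (for instance $\phi_{iab}\phi_j^{\phantom{j}ab} = 6\,\bm g_{ij}$ and the mixed contractions of $\phi$ with $\psi$), together with the ``$G_2$-Bianchi identity'' relating $\nabla\tau$ to the curvature of $\bm g$. I would not reprove these identities from scratch but quote them from Bryant, so that the genuine content of the argument is the bookkeeping that organises $\diff\TT$ into its irreducible pieces and recognises $\RRic$ inside the $\Lambda^3_{27}$ component.
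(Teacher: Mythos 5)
The paper offers no proof of this proposition---it is quoted verbatim from Lotay--Wei \cite{LW1}, equation (3.6)---so there is nothing internal to compare against; your sketch is the standard derivation from that reference and from Bryant \cite{Bryant}, and it is correct in outline: parametrise $\dot\phi = i_\phi(h)+\iota_X\psi$ with $\del_t\bm g = 2h$, observe that for closed $\phi$ the form $\Delta\phi = \diff\tau$ has no $\Lambda^3_7$ component, and identify $h$ via Bryant's identities expressing $\Ric$ and $\Scal$ through $\tau$ and $\bm\nabla\tau$. One small correction: the vanishing of the $\Lambda^3_7$ part of $\diff\tau$ comes from $0 = \diff^2(\ast\phi) = \diff(\tau\wedge\phi) = \diff\tau\wedge\phi$ (using $\diff\phi=0$ and the fact that wedging with $\phi$ is injective on $\Lambda^3_7$ and kills $\Lambda^3_1\oplus\Lambda^3_{27}$), not from $\diff^2\phi=0$ as you state.
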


The terms involving $\TT$ should be thought of as giving a lower order correction to Ricci flow, in the sense that the Ricci curvature is determined by $\bm\nabla \TT$, $\TT$ and $\phi$; see equation~(2.29) in \cite{LW1}. Inspired by this, much work has been done to extend results from Ricci flow to the $G_2$-Laplacian flow \cite{LW1, LW2, LW3, L}. One important result of this kind---and which will be important to us in what follows---is an extension criterion due to Lotay--Wei. Recall that if a finite time singularity occurs in the Ricci flow, then the curvature must blow up (due to Hamilton \cite{hamilton}). To state Lotay--Wei's analogous result for the $G_2$-Laplacian flow we first make a definition. Let $\Lambda$ denote the following quantity
\begin{equation}\label{Lambda}
\Lambda(\phi) = \sup_{X} \left( |\RRm(g_\phi)|^2 + |\bm\nabla \TT (\phi(t))|^2\right)^{1/2}
\end{equation}

\begin{theorem}[Lotay--Wei]\label{LW-extension}
Let $\phi(t)$ be a path of closed $G_2$-structures solving the $G_2$-Laplacian flow on a maximal time interval $[0,s)$ with $s < \infty$ on a compact manifold. Then
\[
\lim_{t \nearrow s} \Lambda(\phi(t)) = \infty
\]
\end{theorem}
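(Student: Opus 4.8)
The natural strategy is to prove the contrapositive: I would show that if $\Lambda(\phi(t))$ stays uniformly bounded, say $\Lambda \le \Lambda_0$, on the maximal interval $[0,s)$ with $s<\infty$, then the flow extends smoothly past $s$, contradicting maximality. (The reduction from ``$\lim \neq \infty$'' to a genuine uniform bound is handled by the same estimates applied on shrinking subintervals, so I concentrate on the uniform case.)

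First I would convert the bound on $\Lambda$ into control of the metric. Since $|\RRm|$ is bounded, so is $\RRic$; and because the scalar curvature of a closed $G_2$-structure is a negative multiple of $|\TT|^2$, the curvature bound also controls $|\TT|$. Feeding these into the evolution equation \eqref{metric-evolution-G2} gives $|\partial_t \bm{g}| \le C$, and integrating over the finite interval $[0,s)$ shows that all the metrics are uniformly equivalent, $C^{-1}\bm{g}(0) \le \bm{g}(t) \le C\,\bm{g}(0)$.

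The heart of the argument is a family of Shi-type (Bernstein--Bando--Shi) estimates: from the uniform bound on $|\RRm|$ and $|\bm\nabla\TT|$ I would derive uniform bounds on every covariant derivative $|\bm\nabla^k \RRm|$ and $|\bm\nabla^k \TT|$ on $[\tau,s)$ for each $\tau>0$. These follow by writing down the reaction--diffusion evolution equations satisfied by these quantities---the $G_2$-Laplacian flow analogues of Hamilton's curvature evolution---and applying the parabolic maximum principle to weighted combinations such as $t\,|\bm\nabla\RRm|^2 + A\,|\RRm|^2$, bootstrapping one derivative at a time. With the metrics uniformly equivalent and all derivatives of $\RRm$ and $\TT$ controlled, interpolation then bounds all derivatives of $\phi(t)$ itself.

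Given these uniform $C^\infty$ bounds, the forms $\phi(t)$ are Cauchy as $t\nearrow s$, since $\partial_t\phi = \Delta\phi$ and all its spatial derivatives are bounded; hence they converge in $C^\infty$ to a smooth limit $\phi(s)$, which the uniform equivalence of metrics guarantees is still a positive closed $G_2$-structure. Finally I would invoke the Bryant--Xu short-time existence theorem to run the flow from $\phi(s)$, and uniqueness to glue it to the original solution, producing a solution on $[0,s+\epsilon)$ and contradicting the maximality of $s$. The main obstacle is the Shi-type estimate: the curvature and torsion of the $G_2$-Laplacian flow are coupled through $\RRic$, $\bm\nabla\TT$, $\TT$ and $\phi$ in a considerably more intricate way than in Ricci flow, so deriving the precise evolution equations and arranging the maximum-principle test functions so that the reaction terms can be absorbed is where the real work lies.
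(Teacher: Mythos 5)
The paper does not prove this theorem---it is quoted from Lotay--Wei \cite{LW1}---so there is no internal proof to compare against; your outline is, however, essentially the argument given in that reference: Shi-type estimates from the bound on $\Lambda$, uniform equivalence of the metrics via the torsion and curvature control in \eqref{metric-evolution-G2}, $C^\infty$-convergence to a limiting closed $G_2$-structure at $t=s$, and restarting via Bryant--Xu short-time existence and uniqueness. The one step you compress too much is the passage from the failure of $\lim_{t\nearrow s}\Lambda(\phi(t))=\infty$ (which a priori only gives a bound on $\Lambda$ along some sequence $t_i\nearrow s$) to a uniform bound on a whole interval $[t_i,s)$: this requires Lotay--Wei's doubling-time estimate, a forward-in-time persistence statement for $\Lambda$ obtained from the maximum principle applied to the coupled evolution of $|\RRm|^2$ and $|\bm\nabla\TT|^2$, and it is not supplied by interior Shi-type estimates ``on shrinking subintervals'', which only propagate bounds into the interior of a time interval where $\Lambda$ is already assumed controlled.
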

%

An important problem is to find conditions under which the $G_2$-Laplacian flow exists for long time. One such result is due to Lotay--Wei, who show that torsion free $G_2$-structures are dynamically stable under $G_2$-Laplacian flow \cite{LW2}: if $\phi(0)$ is sufficiently close to a given cohomologous torsion free $G_2$-structure $\psi$ then the flow exists for all time and converges modulo diffeomorphisms to $\psi$.  Homogeneous examples of the $G_2$-Laplacian flow have been investigated by Lauret \cite{Lauret1} using the bracket/algebraic soliton approach giving, in particular, many examples of long time existence. In \cite{HWY} Huang--Wang--Yao studied a type of $G_2$-structure on the torus $\mathbb{T}^7$ with $\mathbb{T}^6$ symmetry; they show long time existence of the flow and convergence (modulo diffeomorphisms) to a standard flat structure. Fino--Raffero \cite{FR2} studied so-called ``extremely Ricci-pinched'' $G_2$-structures, a special class of $G_2$-structure introduced by Bryant \cite{Bryant} (together with the predating examples of Bryant \cite{Bryant} and Lauret \cite{Lauret1}), and showed the long time existence of the flow; in these cases the flow does \emph{not} converge as $t\to \infty$.  In \cite{LL}, Lambert--Lotay proved the long-time existence and convergence of the $G_2$-Laplacian flow in the case of semi-flat coassociative $\mathbb{T}^4$-fibrations, via the reduction of the $G_2$-Laplacian flow to a spacelike mean curvature flow in $H^2(\mathbb{T}^4)=\mathbb{R}^{3,3}$ (the connection with spacelike mean curvature flow is another observation due to Donaldson \cite{Donaldson}). 

It should be noted that among all these examples, there is no \emph{compact} example of finite time singularity. We should mention, however, an interesting example due to Lauret \cite{Lauret2} of a complete shrinking closed $G_2$-Laplacian soliton, which thus becomes singular in finite time. 
 
All of the known compact examples in which the flow exists for long time but does not converge at infinity have a common property: the volume along the flow increases without bound. In such a cohomology class, Hitchin's volume functional $\mathcal{V}$ is unbounded (see item 3 of section \ref{background}). For hypersymplectic structures, however, there is a uniform bound on the volume in terms of the cohomology classes of the $\underline\omega$. Since $\det Q =1$, we have that $\tr Q \geqslant3$ and so Equation~\eqref{Q-definition} implies that for any hypersymplectic stucture $\underline\omega$, the total volume satisfies
\begin{equation}
\int_{M \times \mathbb{T}^3} \nu_\phi = \int_M \mu_{\underline\omega} \leqslant\frac{1}{6} \int_M \omega_1^2 + \omega_2^2 + \omega_3^2
\label{volume-bound}
\end{equation}
where the right-hand side depends only on the cohomology classes of the $\omega_i$.


\section{The hypersymplectic flow}\label{hypersymplectic-flow-to-date}

\subsection{Evolution equations}

In \cite{FY}, the authors began the study of the $G_2$-Laplacian flow in the setting of hypersymplectic structures, with the (as yet unfulfilled!) aim of proving Donaldson's Conjecture. In this section we review this work, setting the scene for the new results which we will explain in the following section, some of which will depend at various points on similar arguments.

Let $(M, \underline{\omega})$ be a compact 4-manifold with a hypersymplectic structure and let $\phi$ be the associated closed $G_2$-structure on $X = M \times \mathbb{T}^3$ as defined in \eqref{toric-G2}. The first simple observation is that the $G_2$-Laplacian flow starting at $\phi$ descends to a flow of hypersympectic structures.

\begin{lemma}[Lemma 2.8 of \cite{FY}]
Let $\phi(t)$ solve the $G_2$-Laplacian flow~\eqref{G2-flow} on $M \times \mathbb{T}^3$, starting at $\phi(0)$ of the form~\eqref{toric-G2} for a hypersymplectic structure $\underline{\omega}(0)$ on $M$. Then there is a path $\underline{\omega}(t)$ of cohomologous hypersymplectic structures on $M$ for which $\phi(t)$ and $\underline{\omega}(t)$ are related by~\eqref{toric-G2}.
\end{lemma}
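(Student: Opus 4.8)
The plan is to exploit the symmetries of the initial data together with the uniqueness half of the Bryant--Xu theorem, reducing everything to the fact that the $G_2$-Laplacian flow is natural under diffeomorphisms. First I would show that $\mathbb{T}^3$-invariance is preserved. Let $R_a\colon M\times\mathbb{T}^3\to M\times\mathbb{T}^3$ be translation by $a\in\mathbb{T}^3$ in the torus factor; since the coefficients in \eqref{toric-G2} are pulled back from $M$, we have $R_a^*\phi(0)=\phi(0)$. Because the Hodge Laplacian is natural, $R_a^*\Delta_{g_\phi}=\Delta_{g_{R_a^*\phi}}R_a^*$ and $R_a^*g_\phi=g_{R_a^*\phi}$, the path $R_a^*\phi(t)$ solves the same flow with the same initial datum, so by uniqueness $R_a^*\phi(t)=\phi(t)$; that is, $\phi(t)$ is $\mathbb{T}^3$-invariant. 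Decomposing by the number of torus legs, invariance forces
\[
\phi(t)=f\,\diff t^{123}-\diff t^i\wedge\omega_i+\diff t^i\wedge\diff t^j\wedge\tau_{ij}+\gamma,
\]
where $f$ is a function, the $\omega_i$ are $2$-forms, the $\tau_{ij}$ are $1$-forms and $\gamma$ is a $3$-form, all on $M$ and depending on $t$. Since $\phi(t)$ is closed and $\diff$ acts as $\diff_M$ on invariant forms, separating torus degrees gives $\diff_M f=0$ (so $f$ is a time-dependent constant), $\diff_M\omega_i=0$, $\diff_M\tau_{ij}=0$ and $\diff_M\gamma=0$.

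The heart of the matter is to show that the unwanted components $\tau_{ij}$ and $\gamma$ vanish for all time. Here I would use the antipodal involution $\sigma\colon(x,t)\mapsto(x,-t)$. Checking the four types of components shows that $\sigma^*\phi(t)=-\phi(t)$ if and only if $\tau_{ij}=0$ and $\gamma=0$; in particular $\sigma^*\phi(0)=-\phi(0)$ because $\phi(0)$ has the form \eqref{toric-G2}. The key point is that $-\phi$ induces the \emph{same} metric as $\phi$: from $\beta_{-\phi}=-\beta_\phi$, the form $-\phi$ is the $G_2$-structure determined by $\phi$ for the opposite orientation, so $g_{-\phi}=g_\phi$ and hence $\Delta_{g_{-\phi}}=\Delta_{g_\phi}$. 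Therefore $-\phi(t)$ solves the flow with initial datum $-\phi(0)$, while naturality shows $\sigma^*\phi(t)$ also solves the flow, with the same initial datum $\sigma^*\phi(0)=-\phi(0)$. Uniqueness then forces $\sigma^*\phi(t)=-\phi(t)$ for all $t$, whence $\tau_{ij}(t)=0$ and $\gamma(t)=0$.

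It remains to pin down $f$ and to identify $\underline{\omega}(t)=(\omega_1(t),\omega_2(t),\omega_3(t))$. Because $\phi(t)$ is closed, $\Delta\phi=\diff\diff^*\phi$; as $\diff$ raises the $M$-degree and preserves the torus degree on invariant forms, no component of $\diff^*\phi$ can map onto $\diff t^{123}$, so the $\diff t^{123}$-component of $\Delta\phi$ vanishes, $\partial_t f=0$, and $f\equiv f(0)=1$. Thus $\phi(t)$ has exactly the shape \eqref{toric-G2}. Since the flow stays within $G_2$-structures, $\phi(t)$ is a $G_2$-structure as long as it exists, and by the equivalence between \eqref{toric-G2} and hypersymplectic triples (Lemma~2.2 of \cite{FY}) the closed triple $\underline{\omega}(t)$ is hypersymplectic. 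Finally, $[\phi(t)]$ is constant along the flow (item~2 of the remarks on \eqref{G2-flow}), and by the K\"unneth decomposition of $H^3(M\times\mathbb{T}^3)$ this determines each $[\omega_i(t)]\in H^2(M)$, so the $\underline{\omega}(t)$ are cohomologous.

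The main obstacle is the middle step: $\mathbb{T}^3$-invariance and closedness alone still permit the extra pieces $\tau_{ij}$ and $\gamma$, and showing these stay zero by a direct evolution computation is awkward because the induced metric, and hence the Laplacian, would have to be controlled off the special locus. The device that bypasses this is the orientation-reversing symmetry $\sigma$ combined with the uniqueness of the flow; its validity rests on the simple but not wholly obvious observation that $\phi$ and $-\phi$ share the same induced metric, so that $-\phi(t)$ is genuinely a solution of the same flow. If one instead prefers an explicit route, the same conclusions follow by computing $\Delta\phi$ for $\phi$ of the form \eqref{toric-G2} and reading off the resulting evolution of the $\omega_i$, at the cost of a longer calculation.
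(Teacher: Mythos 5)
Your argument is correct, and it is essentially the argument of the cited source: the paper itself gives no proof, deferring to Lemma~2.8 of \cite{FY}, whose proof likewise establishes $\mathbb{T}^3$-invariance of $\phi(t)$ via naturality plus Bryant--Xu uniqueness and then kills the extra invariant components using the involution $t\mapsto -t$ together with the observation that $\phi$ and $-\phi$ induce the same metric (for opposite orientations), so that $-\phi(t)$ is again a solution. Your treatment of the remaining points --- constancy of the $\diff t^{123}$-coefficient, positivity of the triple via the equivalence in Lemma~2.2 of \cite{FY}, and constancy of $[\omega_i(t)]$ by K\"unneth --- is complete and accurate.
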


The evolution equation for $\underline{\omega}(t)$ has a purely 4-dimensional formulation. Recall that, for each $t$, $\underline{\omega}(t)$ determines a Riemannian metric $g(t) = g_{\underline{\omega}(t)}$ on $M$ and also matrix-valued function $Q_{ij}(t) = \frac{1}{2} \langle \omega_i(t), \omega_j(t)\rangle$ given by (half of) the pointwise inner-products of the components of $\underline{\omega}(t)$ with respect to $g(t)$. The matrix $Q(t)$ is positive definite (because $\underline{\omega}(t)$ is hypersymplectic and its components are self-dual) and hence invertible. The $G_2$-Laplacian flow for $\phi(t)$ becomes the following evolution equation for $\underline{\omega}(t)$:
\begin{equation}\label{hypersymplectic-flow}
\frac{\del \underline{\omega}}{\del t}
=
\diff \left( Q\, \diff^* \left( Q^{-1}\underline{\omega} \right) \right)
\end{equation}
More explicitly, writing $Q^{ij}$ for the components of the inverse matrix $Q^{-1}$ and with the summation convention we have the following evolution equation for each component of $\underline{\omega}$.
\begin{equation}
\frac{\del \omega_i}{\del t}
=
\diff \left( 
Q_{ik} \,\diff^* \left( Q^{kl} \omega_l \right) 
\right)
\end{equation}

To give the evolution equations for the 4-dimensional metric $g(t)$ and the matrix-valued function $Q(t)$, we first need a short digression. Write $\mathcal{P}$ for the space of symmetric positive-definite 3-by-3 matrices. By definition, $Q \colon M \to \mathcal{P}$. Since $\mathcal{P}$ is an open set in the vector space $S^2\R^3$ of all symmetric 3-by-3 matrices, we can identify the tangent space $T_P\mathcal{P}$ at any point with the vector space $S^2\R^3$ in a natural way. With this in hand, we can define a Riemannian metric on $\mathcal{P}$, which makes it into a complete symmetric space of non-positive curvature. Given $A,B \in S^2\R^3 \cong T_P\mathcal{P}$ their Riemannian inner-product is given by
\begin{equation}
\left\langle A,B \right\rangle_P = \tr (P^{-1}AP^{-1}B)
\end{equation}
The action of $\GL_+(3,\R)$ on $\mathcal{P}$ given by $G \cdot P = G P G^T$ is isometric, giving 
\[
\mathcal{P} \cong \GL_+(3,\R)/\SO(3)
\]

We will treat $Q \colon M \to \mathcal{P}$ as a map between two Riemannian manifolds, using the complete symmetric metric on $\mathcal{P}$. We write $\hat{\Delta}Q$ for the harmonic map Laplacian of $Q$. The ``hat'' is to distinguish it from the ordinary Laplacian applied to each component. The two are related by
\begin{equation}
\left( \hat{\Delta}Q \right)_{ij}
	=
		\Delta(Q_{ij}) 
		-
		\langle\diff Q_{ik}Q^{km},\diff Q_{mj}\rangle_{g_{\underline{\omega}}}
\end{equation}
Another piece of notation we will use is $|\diff Q|^2_Q$ for the norm-squared of $\diff Q \colon T_xM \to T_{Q(x)}\mathcal{P}$. Explicitly,
\begin{equation}
|\diff Q|^2_Q
	=
		\langle
		Q^{ij}\diff Q_{jk},Q^{km}\diff Q_{mi}
		\rangle_{g_{\underline{\omega}}}
\end{equation}
More generally, if $A,B$ are tensors of the same type, with values in $Q^*T\mathcal{P}$, we write $\left\langle A,B \right\rangle_Q$ for their inner-product defined using $g_{\underline{\omega}}$ and the symmetric metric on $\mathcal{P}$. We can also use the metric on $\mathcal{P}$ only on the $Q^*T\mathcal{P}$ factors. For example, we write $\langle \diff Q \otimes \diff Q\rangle_{Q}$ for the symmetric 2-tensor given by
\begin{equation}
\langle \diff Q \otimes \diff Q\rangle_{Q}(u,v)
	=
		\left\langle \nabla_u Q,\nabla_v Q \right\rangle_Q
\end{equation}

The last piece of information we need is the triple of 1-forms $\underline{\tau} = Q\, \diff^*(Q^{-1} \underline{\omega})$ on $M$. The point here is that the hypersymplectic flow is $\del_t \underline{\omega} = \diff \underline{\tau}$ whilst the torsion of the $G_2$-structure $\phi$ is given by
\begin{equation}
\TT 
    =
       - \frac{1}{2} \diff^* \phi
	=  
		- \frac{1}{2}\left( 
		\diff t^1 \wedge \tau_1 
		+
		\diff t^2 \wedge \tau_2
		+
		\diff t^3 \wedge \tau_3
		\right)
\end{equation}
Starting with $\underline{\tau}$ we write $\left\langle \underline{\tau},\underline{\tau} \right\rangle$ for the 3-by-3 matrix whose $(i,j)$-element is $\left\langle \tau_i, \tau_j \right\rangle$. We also write $\underline{\tau} \otimes \underline{\tau}$ for the 3-by-3 matrix of 2-tensors whose $(i,j)$-element is $\tau_i \otimes \tau_j$. 

With all of this in hand, we can now write the evolution equations for $g(t)$ and $Q(t)$ under the hypersymplectic flow:
\begin{align}
\frac{\del Q}{\del t}
	&=
		\hat{\Delta} Q 
		+ 
		\left\langle \underline{\tau}, \underline{\tau} \right\rangle
		-
		\frac{1}{3}\tr \left(  
		Q^{-1} \left\langle \underline{\tau}, \underline{\tau} \right\rangle
		\right)
		Q
		\label{Q-evolution}\\
\frac{\del g}{\del t}
	&=
		- 2\Ric(g)
		+ 
		\frac{1}{2} \left\langle \diff Q \otimes \diff Q  \right\rangle_Q
		+
		\tr \left( Q^{-1}\underline{\tau} \otimes \underline{\tau} \right)
		-
		\frac{2}{3} \tr \left(  
		Q^{-1} \left\langle \underline{\tau}, \underline{\tau} \right\rangle
		\right) g
		\label{metric-evolution}
\end{align}
It is interesting to compare this to the Ricci flow coupled to the harmonic map flow, introduced by Buzano (né Müller) in \cite{Muller}. If one drops all the terms involving $\underline{\tau}$, then one obtains exactly this coupled flow. The terms involving $\underline{\tau}$ are lower order, just as the metric evolution of the $G_2$-Laplacian flow is a low order adjustment to the Ricci flow. It is easily seen that $|\TT|^2=\tr \left(  
Q^{-1} \left\langle \underline{\tau}, \underline{\tau} \right\rangle
\right)$. 

The two following evolution inequalities are crucial to what follows. 
\begin{proposition}
Under the hypersymplectic flow,
\begin{align}
\left( \frac{\del}{\del t} - \Delta \right) \tr Q
	& \leqslant
		\frac{5}{3} |\TT|^2 \tr Q
		\label{heat-tr}\\
\left( \frac{\del}{\del t} - \Delta \right) |\diff Q|^2_Q
	& \leqslant
		-|\hat{\nabla}\diff Q|^2_Q
		-\frac{1}{16}|\diff Q|^4_Q
		+
		C \left( \tr Q \right)^{19}|\TT|^2|\diff Q|^2_Q
		\label{heat-dQ}
\end{align}
where $\hat{\nabla}\diff Q$ is the Hessian of $Q \colon M \to \mathcal{P}$ and $C$ is a universal constant.
\end{proposition}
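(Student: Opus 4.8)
The plan is to derive both inequalities directly from the evolution equations~\eqref{Q-evolution} and~\eqref{metric-evolution}, using the comparison $(\hat{\Delta}Q)_{ij} = \Delta(Q_{ij}) - \langle \diff Q_{ik}Q^{km},\diff Q_{mj}\rangle$ between the harmonic-map and component-wise Laplacians, together with two elementary linear-algebra facts: that $\lambda_{\max}(Q) \leqslant \tr Q$, and that $\det Q = 1$ lets one bound every entry of $Q^{-1}$ by a fixed power of $\tr Q$.

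For~\eqref{heat-tr}, since $\tr Q$ is linear in the entries of $Q$ we have $(\partial_t - \Delta)\tr Q = \tr(\partial_t Q - \Delta Q)$, with $\Delta$ applied component-wise. Substituting~\eqref{Q-evolution} and the comparison above, the contribution of $\hat{\Delta}Q - \Delta Q$ to the trace is $-\sum_\alpha \tr\!\big(Q^{-1}(\partial_\alpha Q)^2\big) \leqslant 0$, which I discard. The reaction terms leave $\tr\langle\underline{\tau},\underline{\tau}\rangle - \tfrac13 |\TT|^2 \tr Q$, and since $\langle\underline{\tau},\underline{\tau}\rangle$ is positive semidefinite, $\tr\langle\underline{\tau},\underline{\tau}\rangle \leqslant \lambda_{\max}(Q)\,\tr(Q^{-1}\langle\underline{\tau},\underline{\tau}\rangle) \leqslant (\tr Q)|\TT|^2$. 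This already gives the sharper bound $\tfrac23|\TT|^2\tr Q$, comfortably inside $\tfrac53 |\TT|^2 \tr Q$.

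For~\eqref{heat-dQ} the scheme is a Bochner computation, viewing $Q$ as a map into the nonpositively curved symmetric space $\mathcal{P}$. Discarding the $\underline{\tau}$-terms, \eqref{Q-evolution} and~\eqref{metric-evolution} are exactly the Ricci flow coupled to harmonic map flow of~\cite{Muller}, so I would follow that computation. Working intrinsically, $(\partial_t - \Delta)|\diff Q|^2_Q$ produces the good Hessian term $-2|\hat{\nabla}\diff Q|^2_Q$, an $M$-Ricci term, and a target-curvature term; nonpositivity of the curvature of $\mathcal{P}$ makes the latter $\leqslant 0$, so it is dropped, while the $M$-Ricci term cancels against the $-2\Ric$ contribution coming from $\partial_t g^{\alpha\beta}$ --- this cancellation is exactly what the coupling is designed to produce. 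The surviving reaction comes from the $\tfrac12\langle\diff Q\otimes\diff Q\rangle_Q$ term in~\eqref{metric-evolution}, contributing $-\tfrac12|A|^2$ with $A_{\alpha\beta}=\langle\nabla_\alpha Q,\nabla_\beta Q\rangle_Q$. As $A\succeq 0$ and $\dim M = 4$, Cauchy--Schwarz gives $|A|^2 \geqslant \tfrac14(\tr A)^2 = \tfrac14|\diff Q|^4_Q$, hence $-\tfrac12|A|^2 \leqslant -\tfrac18|\diff Q|^4_Q$, twice what is claimed, leaving slack to absorb sign-indefinite cross-terms.

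The main obstacle is the torsion error. The $\underline{\tau}$-pieces of $\partial_t g$ contract against $A$ to give terms of the shape $|\TT|^2|\diff Q|^2$ directly, but the $\underline{\tau}$-source in~\eqref{Q-evolution} contributes $2\langle \hat{\nabla}(\langle\underline{\tau},\underline{\tau}\rangle - \tfrac13|\TT|^2 Q),\diff Q\rangle_Q$, which a priori involves $\hat{\nabla}\underline{\tau}$ --- a derivative of the torsion that does not appear on the right-hand side. The resolution is the explicit formula $\tau_i = -Q_{ik}\,\iota_{(\diff Q^{kl})^\sharp}\omega_l$, valid because each $\omega_l$ is closed and $g$-self-dual so that $\diff^*\omega_l = 0$; it shows $\underline{\tau}$ is algebraically of the form $\mathrm{poly}(Q)\cdot\diff Q$ and $\hat{\nabla}\underline{\tau}$ of the form $\mathrm{poly}(Q)\,(\hat{\nabla}\diff Q + \diff Q\otimes\diff Q + \cdots)$. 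Young's inequality then trades the $\hat{\nabla}\diff Q$ part for a controlled multiple of $|\hat{\nabla}\diff Q|^2_Q$ --- consuming exactly one of the two available units, which explains the coefficient $-1$ rather than $-2$ in the statement --- and converts the remainder, via $|\underline{\tau}|^2 \leqslant (\tr Q)|\TT|^2$ together with the $\det Q = 1$ bounds on $Q^{-1}$, into $C(\tr Q)^{19}|\TT|^2|\diff Q|^2_Q$. The genuinely laborious part is this last step: keeping track of $\hat{\nabla}\omega_l$, of every factor of $Q$ and $Q^{-1}$ generated along the way, and verifying that the accumulated power of $\tr Q$ does not exceed $19$.
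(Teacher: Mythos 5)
Your proposal is correct and takes essentially the same route as the paper: the first inequality by tracing \eqref{Q-evolution} (your argument even yields the sharper constant $\tfrac{2}{3}$ in place of $\tfrac{5}{3}$), and the second via the Eells--Sampson/Buzano Bochner identity with the Ricci terms cancelling, the nonpositive target-curvature term discarded, and the torsion controlled through $\tau_i = -Q_{ik}\,\iota_{(\diff Q^{kl})^\sharp}\omega_l$. The paper itself only sketches this and defers the torsion bookkeeping (in particular the exponent $19$) to \cite{FY}, which is also the one step you leave unverified.
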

Inequality~\eqref{heat-tr} follows directly from taking the trace of~\eqref{Q-evolution}. The proof of~\eqref{heat-dQ} uses ideas of Eells--Sampson in their seminal work on the harmonic map heat flow \cite{ES}. The key is the Bochner formula:
\begin{equation}
\frac{1}{2}\Delta |\diff Q|^2_Q
=
|\hat{\nabla}\diff Q|^2_Q 
+ 
g^{\alpha \beta}\left\langle \hat{\nabla}_\alpha\hat{\Delta} Q, \hat{\nabla}_\beta Q \right\rangle_Q
+
\Ric^{\alpha \beta}\left\langle \nabla_\alpha Q, \nabla_\beta Q \right\rangle_Q
-
K_{\mathcal P}
\label{bochner}
\end{equation}
Here $K_{\mathcal{P}}$ is a term involving the curvature of $\mathcal{P}$. Since this is non-positive, this term can safely be ignored. This is the same reason why harmonic map heat flow is so successful when the target has non-positive curvature, as is exploited in \cite{ES}. The second point to note is that when we include $\del_t|\diff Q|^2_Q$, the term in~\eqref{bochner} involving the Ricci curvature is cancelled by the Ricci term in the time derivative of the metric. This is exactly what occurs in Buzano's study of the harmonic map flow coupled to the Ricci flow \cite{Muller}. For the remaining parts of the proof of~\eqref{heat-dQ} we refer to the original article \cite{FY}.

\subsection{Extension assuming bounded scalar curvature}\label{survey-FY}

The main result of \cite{FY} is an extension theorem for the hypersymplectic flow. In the following, $\RR(\phi(t))$ denotes the scalar curvature of the closed $G_2$-structure $\phi(t)$ on $M \times \mathbb{T}^3$.

\begin{theorem}[Theorem~1.3 of \cite{FY}]\label{FY}
Let $\underline{\omega}(t)$ be a solution to the hypersymplectic flow~\eqref{hypersymplectic-flow} defined on a time interval $[0,s)$ with $s< \infty$. If $|\RR(\phi(t))|$ is bounded on $[0,s)$ then the flow extends beyond $t=s$ to the time interval $[0, s+ \epsilon)$ for some $\epsilon >0$.
\end{theorem}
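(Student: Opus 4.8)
The plan is to use the Lotay--Wei extension criterion (Theorem~\ref{LW-extension}) in contrapositive form: I will show that the scalar curvature bound forces the quantity $\Lambda(\phi(t))$ of \eqref{Lambda} to stay bounded as $t \nearrow s$, so the flow must extend. The first point is that for a closed $G_2$-structure the scalar curvature equals $-|\TT|^2$ up to a fixed positive factor; hence the hypothesis that $|\RR(\phi(t))|$ is bounded on $[0,s)$ is equivalent to a uniform torsion bound $|\TT|^2 \leqslant C_0$. Since $|\TT|^2 = \tr(Q^{-1}\langle \underline{\tau},\underline{\tau}\rangle)$, this is the only channel through which the torsion enters the estimates below.

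First I would convert the torsion bound into control of $Q$. Taking the spatial maximum in \eqref{heat-tr} and using $|\TT|^2 \leqslant C_0$, Hamilton's maximum-principle trick gives $\tfrac{\diff}{\diff t}\max_M \tr Q \leqslant \tfrac{5}{3}C_0 \max_M \tr Q$, so $\max_M \tr Q$ grows at most exponentially and is bounded on the finite interval $[0,s)$. As $\det Q = 1$ and $Q$ is a positive $3\times 3$ matrix, a bound on $\tr Q$ bounds all eigenvalues above and (their product being $1$) below, so $Q$ and $Q^{-1}$ are uniformly controlled. Feeding $\tr Q \leqslant C$ and $|\TT|^2 \leqslant C_0$ into \eqref{heat-dQ} the final term is dominated by $C'|\diff Q|^2_Q$, and discarding the favourable Hessian term leaves
\[
\left( \frac{\del}{\del t} - \Delta \right)|\diff Q|^2_Q \leqslant -\frac{1}{16}|\diff Q|^4_Q + C'|\diff Q|^2_Q .
\]
The maximum principle then yields the logistic inequality $\tfrac{\diff}{\diff t}F \leqslant -\tfrac{1}{16}F^2 + C'F$ for $F(t)=\max_M|\diff Q|^2_Q$, whose right-hand side is negative once $F > 16C'$; hence $F \leqslant \max(F(0),16C')$ throughout $[0,s)$. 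At this stage $Q$, $Q^{-1}$, $\diff Q$ and $\TT$ are all under pointwise control.

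The main obstacle is the last step: upgrading these bounds to a bound on the full curvature $\RRm$ and on $\bm\nabla\TT$. Because $\bm{g} = g_{\underline{\omega}} + Q_{ij}\diff t^i\otimes\diff t^j$ is a bundle-type metric over $M$ with flat torus fibres, a Kaluza--Klein computation expresses $\RRm$ through $\Rm(g_{\underline{\omega}})$, the Hessian $\hat{\nabla}\diff Q$, and polynomials in $Q, Q^{-1}, \diff Q$; so what is still missing is control of the four-dimensional curvature and of the Hessian of $Q$. A direct maximum principle on $|\RRm|^2$ fails, since its evolution carries a cubic reaction term $\sim|\RRm|^3$ that cannot be absorbed without assuming a curvature bound in advance. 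I would therefore argue by contradiction through a blow-up. If the flow did not extend past $s$, then $\Lambda(\phi(t))\to\infty$ by Theorem~\ref{LW-extension}, and one can select $(x_k,t_k)$ with $t_k\nearrow s$ nearly realising the growing supremum $\lambda_k^2\to\infty$ of $|\RRm|^2+|\bm\nabla\TT|^2$. Rescaling parabolically by $\lambda_k$ and passing to a pointed limit, the scalar curvature scales like $\lambda_k^{-2}\RR\to 0$ uniformly, so the limiting $G_2$-structure is scalar-flat and therefore, being closed, torsion-free and a Ricci-flat fixed point. The pointwise bounds on $Q$ and $\diff Q$ keep the torus fibres from degenerating, so the limit splits off a flat factor and its four-dimensional part is a complete torsion-free hypersymplectic structure, hence hyperk\"ahler by Theorem~\ref{torsion free-hyperkahler}.

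The remaining task is to contradict the normalisation that $|\RRm|=1$ at the basepoint of the limit (note $\bm\nabla\TT$ vanishes there, the limit being torsion-free), i.e.\ to prove the limit is in fact flat. This is exactly where the hypersymplectic hypothesis is essential, since a priori a non-flat complete hyperk\"ahler space such as an ALE gravitational instanton is a legitimate bubble. The uniform volume bound \eqref{volume-bound}---available precisely because $\det Q = 1$ forces $\tr Q \geqslant 3$, so the total volume is controlled by the fixed cohomology classes of the $\omega_i$---together with a non-collapsing estimate for the flow, is what should rule such a bubble out and force flatness, contradicting the normalisation. I expect this volume/energy-concentration step to be the genuinely hard part of the argument, the earlier maximum-principle estimates being preparatory.
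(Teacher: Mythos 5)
Your preparatory steps are essentially the paper's: the identity $\RR(\phi)=-|\TT|^2$ converts the scalar curvature bound into a torsion bound, the maximum principle applied to \eqref{heat-tr} and \eqref{heat-dQ} controls $\tr Q$ and $|\diff Q|^2_Q$, Chen's non-collapsing theorem (applicable because \eqref{metric-evolution-G2} shows $\del_t\bm{g}+2\RRic$ is controlled by $|\TT|$) together with the Lotay--Wei Shi-type estimates lets one extract a pointed blow-up limit, and the bounded scalar curvature forces the limit to be torsion free, complete, of Euclidean volume growth, with finite energy by a Simon-type integral argument --- i.e.\ a non-trivial ALE gravitational instanton. (Two smaller caveats: passing the \emph{forms} $\tilde{\underline{\omega}}_i$ to the limit requires a $C^2$ bound on $Q$, not just the $C^0$ and $C^1$ bounds your maximum principles supply; and the rescaling \eqref{hypersymplectic-parabolic-rescale} stretches only the 4-manifold directions, so the 7-dimensional picture does not literally rescale as in \eqref{G2-parabolic-rescale}.)

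The genuine gap is the final step, which you leave as a hope that the volume bound \eqref{volume-bound} plus non-collapsing ``should rule such a bubble out and force flatness.'' They cannot. A non-flat ALE instanton (e.g.\ Eguchi--Hanson) has Euclidean volume growth and finite energy, and it forms at scale $\Lambda_i^{-1/2}\to 0$, so it consumes an arbitrarily small amount of the unrescaled volume and a bounded, non-zero amount of the scale-invariant energy; neither the volume bound nor the $L^2$ curvature bound excludes it. What the paper actually uses is Kronheimer's classification: a non-flat ALE gravitational instanton contains an embedded holomorphic $2$-sphere $S$ with $[S]^2=-2$. This $S$ is the limit of embedded spheres $S_i\subset M$ which, for large $i$, are symplectic for one of the $\omega_j$, while the rescaling forces $\int_{S_i}\omega_j(t_i)\to 0$. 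Since the cohomology classes $[\omega_j(t)]$ are fixed along the flow and the classes $[S_i]$ satisfy $[S_i]^2=-2$ with bounded pairing against the $[\omega_j]$, they lie in a finite subset of $H_2(M,\Z)$, so $\int_{S_i}\omega_j(t_i)$ is bounded below by a positive constant $\eta$ --- a contradiction. This symplectic--topological argument, not a volume or energy concentration argument, is the essential input that distinguishes the hypersymplectic flow from the general $G_2$-Laplacian flow, and it is missing from your proposal.
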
 

It is an important open question to decide if bounded scalar curvature prevents a finite time singularity for a general $G_2$-Laplacian flow (cf.~Section 10 of \cite{LW1}).

Before giving an outline of the proof of Theorem~\ref{FY}, we make a small remark. A calculation due to Bryant \cite{Bryant} shows that for \emph{any} closed $G_2$-structure $\phi$, the scalar curvature is given by
\begin{equation}\label{G2-scalar-curvature}
\RR(\phi) = - | \TT |^2
\end{equation}
where $\TT = - \frac{1}{2} \diff^* \phi$ is the torsion of $\phi$. In particular $\RR(\phi) \leqslant 0$ and so the two-sided bound in Theorem~\ref{FY} is equivalent to a lower bound on $\RR$. 

We now sketch the proof of Theorem~\ref{FY}. It is also interesting to compare along the way with what is known for Ricci flow and a general $G_2$-Laplacian flow. Suppose that the flow becomes singular at $t=s$; one can zoom in on the singularity via parabolic rescaling and analyse what happens. The first step is to prove that one can take a limit, giving a complete Riemannian manifold modelling the singularity formation. This can be done for any Ricci flow or any $G_2$-Laplacian flow with bounded scalar curvature. This is well established for Ricci flow and similar ideas apply in the $G_2$ setting as we explain below. The result is a Ricci-flat complete metric with non-zero curvature and Euclidean volume growth. What is special in the case of the hypersymplectic flow is that the limit is actually \emph{hyperk\"ahler}. This brings us to the second step. Kronheimer has classified hyperk\"ahler 4-manifolds with Euclidean volume growth  \cite{kron}. In particular they all contain a holomorphic 2-sphere (for one of their complex structures). This can then be used to deduce a contradiction. We now give some more details. 

\subsubsection*{Step 1. Parabolic rescaling and singularity models}

To describe the parabolic rescaling we begin with the Ricci flow, for which this process is well known. Suppose $(M,g(t))$ is a compact Ricci flow, defined on a maximal time interval $[0,s)$ with $s < \infty$. By Hamilton's extension theorem \cite{hamilton}, we know that $\limsup_{t \to s} \|\Rm(g(t))\|_{C^0} \to \infty$. So there exists an increasing sequence of times $t_i \to s$ such that 
\[
\| \Rm(g(t_i))\|_{C^0} = \sup_{t \leqslant t_i}\| \Rm(g(t))\|_{C^0}\quad
\text{and}\quad
\| \Rm(g(t_i))\|_{C^0} \to \infty
\]
To ease the notation we set $\Lambda_i := \| \Rm(g(t_i))\|_{C^0}$. Now let 
\begin{equation}\label{metric-parabolic-rescale}
\tilde{g}_i(t) = \Lambda_i g(\Lambda^{-1}_i t + t_i)
\end{equation}
This is a sequence of Ricci flows defined on $[-\Lambda_i t_i, 0]$, with 
\[
\| \Rm(\tilde{g}_i(t))\|_{C^0} \leqslant \| \Rm(\tilde{g}(0))\|_{C^0} = 1
\]
Now Shi's estimates \cite{LW1} show that for any $A>0$, $l \in \N$ there exists $C_{l,A}$ such that
\begin{equation}\label{derivative-bounds}
\sup_{t \in [-A,0]} \| \nabla^l \Rm(\tilde{g}_i(t))\|_{C^0} \leqslant C_{l,A}
\end{equation}
So curvature of $\tilde{g}_i$ is bounded with all derivatives. To take a limit, we also need Perelman's famous non-collapsing theorem:

\begin{theorem}[Perelman's $\kappa$-non-collapsing, \cite{Perelman}. See also \cite{KL} for this particular formulation]\label{perelman-non-collapsing}
Let $g(t)$ be a Ricci flow on a compact $n$-manifold $M$, defined for $t \in [0,s)$. There exists $\kappa>0$ such that for all $t \in [0,s)$, and for all $B_{g(t)}(x,r)$ with $r < \sqrt{s}$, 
\[
\text{if}\quad
\sup_{B(x,r)} R \leqslant \frac{1}{r^2}\quad
\text{then}\quad
\Vol_{g(t)}B_{g(t)}(x,r) \geqslant\kappa r^n
\]
\end{theorem}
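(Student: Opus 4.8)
The plan is to prove this by Perelman's entropy method, which converts the desired local volume bound into a consequence of the monotonicity of a scale-invariant energy. First I would recall the $\mathcal{W}$-entropy: for a metric $g$, a function $f$ on $M$ and a scale $\tau>0$, set
\[
\mathcal{W}(g,f,\tau)
=
(4\pi\tau)^{-n/2}\int_M \left[ \tau\left(|\nabla f|^2 + R\right) + f - n \right] e^{-f}\,\dd V,
\]
and define $\mu(g,\tau) = \inf \mathcal{W}(g,f,\tau)$, the infimum taken over all $f$ subject to the normalisation $(4\pi\tau)^{-n/2}\int_M e^{-f}\,\dd V = 1$. On the compact manifold $M$ this infimum is finite and attained by a smooth minimiser, which is a standard variational fact. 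The whole argument rests on comparing $\mu$ at the two ends of the flow.

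The key input, and the step I expect to be the main obstacle, is Perelman's monotonicity formula. Coupling the Ricci flow to the backward scale $\tau(t) = r^2 + (t_0 - t)$ and to the conjugate heat equation for $f$, one must show
\[
\frac{\dd}{\dd t}\mathcal{W}(g(t),f(t),\tau(t))
=
2\tau \int_M \left| \Ric + \nabla^2 f - \tfrac{1}{2\tau} g \right|^2 (4\pi\tau)^{-n/2} e^{-f}\,\dd V
\geqslant 0.
\]
Establishing this identity is the technical heart of the proof: it requires a careful (and initially unmotivated) computation of the evolution of each term together with integration by parts. Granting it, and fixing a time $t_0 \in [0,s)$, I would take $f_1$ to be the minimiser realising $\mu(g(t_0), r^2)$, solve the conjugate heat equation backwards to produce $f(t)$ on $[0,t_0]$ with $f(t_0)=f_1$ (which keeps $f(t)$ admissible, since the conjugate heat equation preserves the normalisation), and read off from the monotonicity formula that
\[
\mu(g(0),\, r^2 + t_0)
\leqslant
\mathcal{W}(g(0),f(0),r^2+t_0)
\leqslant
\mathcal{W}(g(t_0),f_1,r^2)
=
\mu(g(t_0),r^2).
\]

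Next I would produce a lower bound on the left and an upper bound on the right. For the lower bound, since $g(0)$ is a single fixed smooth metric on the compact $M$ and the scale $r^2 + t_0$ ranges in the bounded interval $(0,2s)$ (using $r < \sqrt{s}$ and $t_0 < s$), a logarithmic Sobolev inequality for $g(0)$ gives $\mu(g(0),\tau) \geqslant -C_0$ for a constant $C_0$ depending only on $g(0)$ and $s$. For the upper bound, I would feed a concentrated test function into $\mathcal{W}(g(t_0),\,\cdot\,,r^2)$: taking $u = e^{-f/2}$ to be a smooth cutoff function supported in $B_{g(t_0)}(x,r)$ and normalised so that the constraint holds, the hypothesis $\sup_{B(x,r)} R \leqslant r^{-2}$ controls the scalar-curvature term while the gradient term is controlled by the cutoff. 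The dominant contribution is the entropy term $-\int u^2 \log u^2$, and a direct computation yields
\[
\mu(g(t_0),r^2)
\leqslant
\mathcal{W}(g(t_0),f,r^2)
\leqslant
C_1 + \log\!\left( \frac{\Vol_{g(t_0)} B(x,r)}{r^n} \right)
\]
for a universal constant $C_1$.

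Finally I would chain the three inequalities. Combining them gives $-C_0 \leqslant C_1 + \log\!\left(\Vol_{g(t_0)} B(x,r)/r^n\right)$, whence
\[
\Vol_{g(t_0)} B_{g(t_0)}(x,r) \geqslant e^{-(C_0+C_1)}\, r^n,
\]
so the theorem holds with $\kappa = e^{-(C_0+C_1)}$, a constant depending only on $g(0)$, $n$ and $s$ and, crucially, independent of the chosen time $t_0$. Two points deserve care: one must verify existence of the minimiser $f_1$ and admissibility of the backward solution (both standard on a compact manifold), and one should note that the same conclusion can be reached, somewhat more geometrically, through Perelman's reduced-volume monotonicity in place of the $\mathcal{W}$-entropy. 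The entropy monotonicity formula remains the one genuinely hard ingredient; everything else is bookkeeping with test functions and a priori control of the fixed initial metric.
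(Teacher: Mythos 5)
There is nothing to compare against here: the paper does not prove Theorem~\ref{perelman-non-collapsing} at all, but quotes it as a known black box from Perelman's work, citing \cite{Perelman} and \cite{KL} for this particular formulation (scalar curvature rather than full curvature controlling the ball). Your proposal is therefore supplying a proof where the paper supplies a citation, and what you have written is a faithful outline of the standard entropy argument from exactly those references: $\mathcal{W}$-monotonicity along the coupled system, the chain $\mu(g(0),r^2+t_0)\leqslant \mathcal{W}(g(0),f(0),r^2+t_0)\leqslant \mathcal{W}(g(t_0),f_1,r^2)=\mu(g(t_0),r^2)$, a log-Sobolev lower bound for the fixed initial metric on the bounded range of scales $(0,2s)$, and a concentrated test function to convert collapsing into very negative entropy. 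The directions of all the inequalities and the preservation of the normalisation under the conjugate heat equation are correct.

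The one step you pass over too quickly is the test-function estimate. With $e^{-f}=Au^2$ for a cutoff $u$ supported in $B(x,r)$ and equal to $1$ on $B(x,r/2)$, the gradient term contributes roughly $C\,\Vol B(x,r)/\Vol B(x,r/2)$ to $\mathcal{W}(g(t_0),f,r^2)$, and this ratio is \emph{not} a priori bounded; so the clean inequality $\mu(g(t_0),r^2)\leqslant C_1+\log\bigl(\Vol B(x,r)/r^n\bigr)$ does not follow from ``the gradient term is controlled by the cutoff'' alone. The standard repair (as in \cite{KL}) is a dichotomy: either $\Vol B(x,r/2)\geqslant 2^{-(n+2)}\Vol B(x,r)$, in which case the estimate goes through, or one replaces $r$ by $r/2$ (noting the hypotheses persist and the collapsing ratio only worsens) and iterates, terminating because the volume ratio tends to $\omega_n$ as the radius shrinks. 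This is a routine but genuinely necessary piece of bookkeeping; with it inserted, your argument is complete and is the intended proof of the quoted statement.
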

With this in hand, standard convergence theorems from Riemannian geometry \cite{hamilton} enable us to take a pointed limit (in the sense of Cheeger--Gromov \cite{P}), centred at the points of maximum curvature, $\tilde{g}(t) = \lim_{i \to \infty} \tilde{g}_i(t)$. The limit is again a solution to Ricci flow, defined on an open manifold $N$ for $t \in (-\infty,0]$.

If we assume in addition that the original Ricci flow satisfies $|R(g(t))| < K$, then the rescaled limit has $R(\tilde{g}(t))= 0$ for all $t$. The evolution equation for scalar curvature under Ricci flow reads
\begin{equation}
\frac{\del R}{\del t} = -  \Delta R + 2|\Ric|^2
\end{equation}
So a scalar-flat Ricci flow is necessarily Ricci flat, and $\tilde{g}(t) = \tilde{g}$ is actually independent of $t$. We remark that, by choice of the rescaling factors and the centres in the pointed limit, $\| \Rm(\tilde{g})\|_{C^0} = 1$ and so $(N,\tilde{g})$ is not simply flat. 

The bound $|R(g(t))|<K$ also means that Perelman's non-collapsing result gives information about volume of \emph{all} balls below a certain radius: there exists $\kappa>0$ and $\rho >0$ such that for any $x \in M$, $t \in [0,s)$ and $r \leqslant \rho$, 
\begin{equation}\label{non-collapse}
\Vol_{g(t)}B_{g(t)}(x,r) \geqslant\kappa r^n
\end{equation}
After rescaling, this implies that $(N, \tilde{g})$ has Euclidean volume growth. 

Up to this stage the argument holds in all dimensions. Thanks to a theorem of M.~Simon \cite{Simon}, in dimension~4 we can make one further deduction about $(N, \tilde{g})$.
\begin{theorem}[Simon \cite{Simon}]
\label{simon-L2-curvature}
Let $g(t)$ be a solution to Ricci flow on a compact 4-manifold $M$, for $t \in [0,s)$ with $s <\infty$. Suppose moreover that $|R(g(t))| < K$ for all $t \in [0,s)$. Then there is a constant $C$ such that for all $t \in [0,s)$,
\[
\int_M |\Rm(g(t))|^2\, \dvol_{g(t)} \leqslant C
\]
\end{theorem}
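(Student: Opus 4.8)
The plan is to reduce the instantaneous bound on $\int_M|\Rm|^2$ to an instantaneous bound on $\int_M|\Ric|^2$ by exploiting the topology of $M$ in dimension four, and then to control $\int_M|\Ric|^2$ by combining a space--time $L^2$-bound on the Ricci curvature with a differential inequality. The first ingredient is the Chern--Gauss--Bonnet identity, which for any metric $g$ on a closed $4$-manifold can be written
\[
\int_M |\Rm|^2\,\dvol = 8\pi^2\chi(M) + \frac{5}{2}\int_M|\Ric|^2\,\dvol - \frac12\int_M R^2\,\dvol .
\]
Since $\chi(M)$ is a topological constant and the last term is non-positive, this gives $\int_M|\Rm(g(t))|^2\,\dvol \leqslant 8\pi^2\chi(M) + \frac52\int_M|\Ric(g(t))|^2\,\dvol$, so it suffices to bound $f(t):=\int_M|\Ric(g(t))|^2\,\dvol$ uniformly in $t\in[0,s)$.

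Second, I would use the scalar curvature bound to control the volume and to produce a space--time bound on $\Ric$. From $\partial_t\dvol=-R\,\dvol$ and $|R|\leqslant K$ one gets $\Vol(M,g(t))\leqslant e^{Ks}\Vol(M,g(0))$, so the volume is uniformly bounded since $s<\infty$. Integrating the evolution equation for $R$ over $M$, using $\int_M\Delta R\,\dvol=0$ and $\partial_t\dvol=-R\,\dvol$, yields $2\int_M|\Ric|^2\,\dvol = \frac{d}{dt}\int_M R\,\dvol + \int_M R^2\,\dvol$. Integrating this in time and bounding the right-hand side with $|R|\leqslant K$ and the volume bound gives the space--time estimate $\int_0^s f(\tau)\,d\tau \leqslant C$.

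Third, and this is the \emph{crux}, I would derive a differential inequality for $f$. Differentiating and inserting Hamilton's evolution equation for the Ricci tensor produces
\[
f'(t) \leqslant -2\int_M|\nabla\Ric|^2\,\dvol + C\int_M|\Rm|\,|\Ric|^2\,\dvol + K f(t).
\]
The cubic term $\int_M|\Rm|\,|\Ric|^2$ is the difficulty. I would estimate it by Cauchy--Schwarz together with the identity from the first step (which bounds $\int_M|\Rm|^2$ by $C+\frac52 f$) and a Sobolev inequality controlling $\int_M|\Ric|^4$ by $\int_M|\nabla\Ric|^2$; here it is essential that the Sobolev constant be uniform along the flow on $[0,s)$, a consequence of Perelman's monotonicity \cite{Perelman} underpinned by the $\kappa$-non-collapsing of Theorem~\ref{perelman-non-collapsing} and the scale-uniform volume estimate~\eqref{non-collapse}. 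The delicate point is that in dimension four the relevant Sobolev embedding $W^{1,2}\hookrightarrow L^4$ is borderline, so the good term $-2\int_M|\nabla\Ric|^2$ can absorb the cubic contribution only while $f$ stays below a fixed threshold, in which regime the inequality reduces to $f'\leqslant Cf+C$.

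Finally, I would convert the space--time bound into the desired pointwise-in-time bound. On any interval $[0,t_0]$ with $t_0<s$ the flow is smooth on a closed time interval, so $f$ is automatically bounded there, and the only issue is the behaviour as $t\nearrow s$. Since $\int_0^s f<\infty$, choosing $t_0$ close to $s$ makes the space--time Ricci energy on $[t_0,s)$ as small as we like, and this smallness is exactly what is needed to run the absorption argument above and close the estimate for $f$ up to $t=s$. I expect the \emph{main obstacle} to be precisely this last step: making the energy absorption survive all the way to the singular time despite the critical dimension-four Sobolev exponent, for which the uniform Sobolev constant and the finiteness of the space--time Ricci energy are the indispensable inputs.
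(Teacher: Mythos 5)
You should know at the outset that the paper does not prove this statement: it is quoted verbatim from Simon \cite{Simon} and used as a black box, so there is no in-paper proof to match yours against. What the paper does do is describe the mechanism of Simon's proof (``compute time derivatives of the integral of various curvature quantities\dots{} each integral produces a good term\dots{} and a bad term\dots{} the next integral has a good term which cancels the bad one'') and re-implement it in seven dimensions in Lemma~\ref{energy-bound}. Measured against that, your first two steps are correct and standard: the Chern--Gauss--Bonnet reduction of $\int_M|\Rm|^2$ to $\int_M|\Ric|^2$ (this is exactly how the paper passes from a Ricci bound to a full curvature bound in Lemma~\ref{energy-bound}), the volume bound from $\del_t\dvol=-R\,\dvol$, and the space--time estimate $\int_0^s\int_M|\Ric|^2<\infty$ obtained by integrating $\del_t R=\Delta R+2|\Ric|^2$.

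The crux step, however, has a genuine gap. After Cauchy--Schwarz and the Sobolev inequality, your bound on the cubic term reads $\int_M|\Rm|\,|\Ric|^2\leqslant C_S\,\|\Rm\|_{L^2}\bigl(\int_M|\nabla\Ric|^2+\cdots\bigr)$, and absorbing this into $-2\int_M|\nabla\Ric|^2$ requires $C_S\|\Rm\|_{L^2}<2$. But $\|\Rm\|_{L^2}$ is precisely the quantity you are trying to bound, and even in the most favourable regime $f\to0$ your own Gauss--Bonnet identity pins it from below by roughly $\sqrt{8\pi^2\chi(M)}$; no choice of $t_0$ makes it small, and the Sobolev constant coming from Perelman's non-collapsing carries no smallness either. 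Your proposed rescue --- that $\int_{t_0}^s f$ can be made small --- controls only a time average: in the dangerous scenario $f(t)\sim(s-t)^{-1/2}$ the space--time Ricci energy is finite yet $f$ never re-enters any fixed small regime, so the continuity argument never starts. The fix, and what Simon actually does, is to differentiate $\int_M|\Ric|^2/(R+2K)$ (well defined since $R+2K\geqslant K>0$); its evolution produces the good term $-c\int_M|\Ric|^4\leqslant -c\,f^2/\Vol$, giving a Riccati inequality $f'\leqslant -af^2+bf+c$ that bounds $f$ for \emph{arbitrary} initial data with no threshold, while the bad terms it generates are cancelled by adding a chain of further integrals. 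This is exactly the structure of the quantity $\mathcal{Z}(A_1,A_2,A_3)$ in Lemma~\ref{energy-bound}, whose first summand $|\RRic|^2/(\RR+\beta)$ is the seven-dimensional avatar of Simon's integrand; if you want to repair your argument, that is the term to import.
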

Since the $L^2$-norm of curvature is scale invariant, this information passes to the limit in our parabolic rescaling and we can conclude that 
\[
\int_N |\Rm (\tilde{g})|^2\, \dvol_{\tilde{g}} \leqslant C
\]
We can summarise this rescaling argument in the following result (which is well known to experts in Ricci flow).
\begin{theorem}[Corollary 1.9 of \cite{BZ}]
Let $g(t)$ be a solution to the Ricci flow on a compact manifold $M$. Suppose that the flow exists on a maximal time interval $[0,s)$ with $s< \infty$ and, moreover, that $|R(g(t))| <K$ for all $t \in [0,s)$. Then parabolic rescaling~\eqref{metric-parabolic-rescale} produces a limit $(N, \tilde{g})$ which is complete, Ricci flat with Euclidean volume growth, but which isn't simply flat. If in addition $M$ is four-dimensional, then the limit $(N,\tilde{g})$ has finite energy, i.e.\ the $L^2$-norm of the curvature of $\tilde{g}$ is finite. 
\end{theorem}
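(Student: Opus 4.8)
The plan is to assemble the ingredients already displayed in this section into a single Cheeger--Gromov compactness argument; the statement is really a summary of the preceding discussion, so the proof is largely an act of bookkeeping. First I would invoke Hamilton's extension theorem to produce an increasing sequence of times $t_i \nearrow s$ at which $\Lambda_i := \|\Rm(g(t_i))\|_{C^0}$ equals the supremum of $\|\Rm(g(t))\|_{C^0}$ over $[0,t_i]$ and satisfies $\Lambda_i \to \infty$. Parabolic rescaling via~\eqref{metric-parabolic-rescale} then gives a sequence of Ricci flows $\tilde{g}_i(t)$ on the expanding intervals $[-\Lambda_i t_i, 0]$, normalised so that $\|\Rm(\tilde{g}_i(t))\|_{C^0} \leqslant 1$ for all $t \leqslant 0$, with value $1$ attained at the chosen basepoint at $t=0$ (the basepoints being the spatial points of maximal curvature).

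The second step is to extract a limit. Shi's estimates~\eqref{derivative-bounds} bound all covariant derivatives of curvature uniformly on each compact time interval, while the scalar bound $|R(g(t))| < K$ upgrades Perelman's $\kappa$-non-collapsing (Theorem~\ref{perelman-non-collapsing}) to the uniform lower volume estimate~\eqref{non-collapse}, valid for every ball below a fixed radius $\rho$. These are precisely the hypotheses of the Cheeger--Gromov compactness theorem for Ricci flows, so passing to a subsequence produces a pointed complete limit $(N, \tilde{g}(t))$, again a Ricci flow, now defined for $t \in (-\infty, 0]$, with $\|\Rm(\tilde{g}(0))\|_{C^0} = 1$ at the basepoint; in particular $(N,\tilde{g})$ is not flat.

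The third step reads off the geometric conclusions. Since the original scalar curvature is bounded while the rescaling factors $\Lambda_i \to \infty$, the rescaled scalar curvatures tend to zero and the limit has $R(\tilde{g}(t)) \equiv 0$; feeding this into the evolution equation $\del_t R = -\Delta R + 2|\Ric|^2$ forces $\Ric(\tilde{g}) = 0$, so $\tilde{g}$ is Ricci flat and hence static. Rescaling~\eqref{non-collapse} yields Euclidean volume growth of $(N,\tilde{g})$. Finally, when $M$ is four-dimensional, Simon's theorem (Theorem~\ref{simon-L2-curvature}) supplies a uniform bound on $\int_M |\Rm(g(t))|^2\,\dvol_{g(t)}$ along the original flow; because this $L^2$ energy is scale-invariant in dimension four, the bound passes to the limit and gives $\int_N |\Rm(\tilde{g})|^2\,\dvol_{\tilde{g}} < \infty$.

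The genuinely substantial inputs are the cited black-box theorems, so the main obstacle is not the assembly but confirming that the non-collapsing and compactness machinery apply under only a \emph{scalar} curvature bound rather than a full curvature bound. This is exactly where the hypothesis $|R| < K$ does the real work: it is needed both to localise Perelman's non-collapsing to all sufficiently small balls (yielding the uniform bound~\eqref{non-collapse}) and, through the scalar evolution equation, to force the limit to be a static Ricci-flat metric rather than a nontrivial ancient solution. I would therefore take care to state precisely which version of Cheeger--Gromov convergence is being used and to check that the basepoint normalisation survives the limit, so that the ``not flat'' conclusion is legitimate.
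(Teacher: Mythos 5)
Your proposal is correct and follows essentially the same route as the paper's own discussion: Hamilton's extension theorem to choose the rescaling times, Shi's estimates plus the scalar-curvature-upgraded Perelman non-collapsing to extract a Cheeger--Gromov limit, the evolution equation for $R$ to force the limit to be static and Ricci flat, the rescaled non-collapsing for Euclidean volume growth, and Simon's scale-invariant $L^2$ bound for finite energy in dimension four. No substantive differences to report.
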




Next consider be a $G_2$-Laplacian flow $\phi(t)$ of closed $G_2$-structures on a compact 7-manifold $X$, defined on a maximal time interval $[0,s)$ with $s< \infty$. (For the moment we work in full generality, not assuming the $G_2$-structure is determined by a hypersymplectic triple.) By Lotay--Wei's extension theorem (stated as Theorem~\ref{LW-extension} above) we know that $\limsup_{t \to s} \Lambda(\phi(t)) = \infty$, where 
\[
\Lambda(\phi) = \sup_{X} \left( |\RRm(g_{\phi(t)})|^2 + |\bm\nabla \TT(\phi(t))|^2\right)^{1/2}
\]
So there is a sequence $t_i \to s$ for which 
\[
\Lambda(\phi(t_i))
	= \sup_{t \leqslant t_i}\Lambda(\phi(t))\quad
\text{and}\quad
\Lambda(\phi(t_i)) \to \infty
\] 
We write $\Lambda_i = \Lambda(\phi(t_i))$ and make the parabolic rescaling 
\begin{equation}
\label{G2-parabolic-rescale}
\tilde{\phi}_i(t) = \Lambda_i \phi(\Lambda_i^{-1} t + t_i)
\end{equation}
to produce a sequence of $G_2$-Laplacian flows defined on $[-\Lambda_it_i, 0]$, with $\Lambda(\tilde{\phi}(t)) \leqslant\Lambda(\tilde{\phi}(0)) = 1$. (Note the associated metrics are also related by the same parabolic rescaling~\eqref{metric-parabolic-rescale} as before.)

In \cite{LW1} Lotay--Wei also prove Shi-type estimates for the $G_2$-Laplacian flow. 
From this it follows that for any $A>0$ and $l \in \N$ there is a constant $C_{A,l}$ such that 
\begin{equation}\label{G2-shi}
\sup_{X \times [-A,0]}\left(
|\bm\nabla^l\RRm(g_{\tilde{\phi}_i(t)})|^2 + |\bm\nabla^{l+1}\TT(\tilde{\phi}_i(t))|^2
\right)^{1/2}
< C_{A,l}
\end{equation}
In particular, all derivatives of curvature are bounded. We would like to take a limit, but now we come to an important difference between Ricci flow and $G_2$-Laplacian flow: there is, in general, no known analogue of Perelman's non-collapsing theorem. When $R(\phi(t))$ is bounded, however, we can appeal to a recent generalisation of Perelman's non-collapsing theorem, proved by Chen \cite{Chen}.

\begin{theorem}[Chen \cite{Chen}]
Let $g(t)$ be a path of Riemannian metrics on a compact manifold, defined for $t \in [0,s)$ and suppose that for all $t$,
\[
\| \del_t g +2 \Ric(g(t)) \|_{C^0(g(t))} \leqslant K
\]
Then $g(t)$ is $\kappa$-non-collapsed relative to scalar curvature for all $t \in [0,s)$, i.e., the conclusion of Theorem~\ref{perelman-non-collapsing} holds.
\end{theorem}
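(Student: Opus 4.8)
The plan is to adapt Perelman's entropy argument, the key point being that Perelman's $\mathcal{W}$-functional and its infimum $\mu$ are defined for an \emph{arbitrary} metric on the compact manifold $M$, and that their monotonicity along Ricci flow degrades in a controlled way when one has only a bound on the Ricci-flow defect $h := \del_t g + 2\Ric(g)$. Recall that for a metric $g$, a function $f$ and a scale $\tau > 0$ one sets
\begin{equation*}
\mathcal{W}(g,f,\tau) = \int_M \left[ \tau\left( |\nabla f|^2 + R \right) + f - n \right] (4\pi\tau)^{-n/2} e^{-f} \, \dvol_g,
\end{equation*}
subject to $\int_M (4\pi\tau)^{-n/2} e^{-f}\, \dvol_g = 1$, and $\mu(g,\tau) = \inf_f \mathcal{W}(g,f,\tau)$, the infimum being attained since $M$ is compact. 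The goal is a uniform lower bound $\mu(g(t),\tau) \geqslant -A$ valid for all $t \in [0,s)$ and all scales $\tau < s$, for then the non-collapsing follows by the standard test-function argument below.

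For the initial lower bound, note that $g(0)$ is a \emph{fixed} smooth metric on a compact manifold, so the logarithmic Sobolev inequality gives $\mu_0 := \inf_{\sigma \in (0,2s]} \mu(g(0),\sigma) > -\infty$. The heart of the matter is the monotonicity with error. Letting $\tau(t) = \tau^* + (t_0 - t)$ decrease with unit speed, and letting the weight $u = (4\pi\tau)^{-n/2}e^{-f}$ solve the \emph{modified} conjugate heat equation $\del_t u = -\Delta u + Ru - \tfrac12(\tr_g h)\,u$ — which is exactly what preserves $\int u\,\dvol_{g(t)} = 1$, since the volume form now evolves by $\tfrac12\tr_g(\del_t g) = -R + \tfrac12\tr_g h$ — one repeats Perelman's computation of $\tfrac{\dd}{\dd t}\mathcal{W}(g(t),f(t),\tau(t))$. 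Along genuine Ricci flow this derivative is a nonnegative Perelman integrand; here the substitution $\del_t g = -2\Ric + h$ produces extra terms, each \emph{linear} in $h$ and paired against the probability measure $u\,\dvol_g$. Consequently
\begin{equation*}
\frac{\dd}{\dd t}\,\mu(g(t),\tau(t)) \;\geqslant\; -\,C(n)\,\|h\|_{C^0(g(t))} \;\geqslant\; -\,C(n)\,K.
\end{equation*}
Integrating over $[0,t_0] \subset [0,s)$ and using $\tau(0) = \tau^* + t_0 < 2s$ gives $\mu(g(t_0),\tau^*) \geqslant \mu_0 - C(n)Ks =: -A$, a bound independent of $t_0$ and of $\tau^* < s$.

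Finally I would run Perelman's passage from the entropy bound to volume non-collapsing, which is precisely where the hypothesis enters only through the \emph{scalar} curvature. Given $x \in M$, $r < \sqrt{s}$ with $\sup_{B(x,r)} R \leqslant r^{-2}$, insert into $\mathcal{W}(g,\cdot,r^2)$ a test weight of the form $e^{-f} = c\,\chi^2$ with $\chi$ a standard cut-off supported in $B(x,r)$ and $c$ fixed by the normalisation. The $R$-term is controlled by $r^2 \sup_{B} R \leqslant 1$, while the gradient and $f$ terms are universally bounded in terms of $\log\big(\Vol B(x,r)/r^n\big)$; if the volume ratio $\Vol B(x,r)/r^n$ were too small, $\mathcal{W}$ would be forced below $-A$, contradicting $\mu(g,r^2) \geqslant -A$. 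This yields $\Vol_{g(t)} B_{g(t)}(x,r) \geqslant \kappa r^n$ with $\kappa = \kappa(n,A) > 0$, which is the conclusion of Theorem~\ref{perelman-non-collapsing}.

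The main obstacle is the monotonicity-with-error step: one must verify that every term generated by the defect $h$ in Perelman's differentiated entropy is bounded, after pairing with $u\,\dvol_g$, by $C(n)\|h\|_{C^0}$, uniformly in $t$ and despite the $t$-dependence of the minimiser $f$. This requires the correct modification of the conjugate heat equation recorded above, together with the observation that differentiating at a minimiser kills the first-variation contribution, leaving only the explicit $\tau$- and $h$-terms to estimate. Once this is in place, the finiteness of $s$ converts the $O(K)$ rate into a uniform bound and closes the argument.
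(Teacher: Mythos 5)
The paper does not actually prove this statement---it is quoted from Chen's article---so the only meaningful comparison is with Chen's argument, which is indeed the adaptation of Perelman's entropy monotonicity that you outline. Your overall strategy (a uniform lower bound on $\mu(g(t),\tau)$ for $\tau<2s$ via almost-monotonicity of $\mathcal{W}$ along the modified conjugate heat equation, followed by the standard cut-off test function at scale $r^2$) is the correct one, and the first and last steps are fine.

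The genuine gap is in the almost-monotonicity step, precisely the one you flag as ``the main obstacle''. Substituting $\del_t g = -2\Ric + h$ into Perelman's computation does produce terms linear in $h$, but they are \emph{not} merely paired against the probability measure $u\,\dvol_g$: the dominant one is
\[
-\int_M \tau\, h^{ij}\Bigl(R_{ij}+\nabla_i\nabla_j f - \tfrac{1}{2\tau}g_{ij}\Bigr)\,u\,\dvol_g,
\]
and further terms come from $\nabla_i\nabla_j h^{ij}$ in the variation of $R$, which after integration by parts pair $h$ against $\nabla f\otimes\nabla f$ and $\nabla^2 f$. None of the factors $\Ric$, $\nabla^2 f$, $|\nabla f|^2$ is bounded by the hypotheses, so the inequality $\tfrac{\dd}{\dd t}\mu \geqslant -C(n)\|h\|_{C^0}$ does not follow from the reason you give, and it is false in that form: $\mu(g,\tau)$ is not $C^0$-Lipschitz in $g$ with a dimensional constant. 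The standard repair is to absorb each such cross term into Perelman's nonnegative square $2\tau\int_M\bigl|\Ric+\nabla^2 f-\tfrac{g}{2\tau}\bigr|^2u\,\dvol_g$ by Young's inequality, at the cost of an error of size $C\tau\|h\|_{C^0}^2\leqslant C\tau K^2$ (plus terms controlled by $\mathcal{W}$ itself and disposed of by Gronwall). Since $\tau\leqslant 2s$ and the time interval has finite length, this still integrates to a uniform constant $A=A(n,K,s,g(0))$, and the rest of your argument goes through unchanged. A smaller point: one does not need ``the first variation at a minimiser vanishes''; the standard device is to fix the minimiser at the final time, flow $u$ backwards, and compare $\mathcal{W}$ at the two endpoints, bounding the initial value below by $\mu(g(0),\tau(0))$.
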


Recalling the evolution equation~\eqref{metric-evolution-G2} for $g(t)$ under $G_2$-Laplacian flow, and the fact that $\RR(\phi(t)) = -|\TT|^2$, we see that when the scalar curvature is bounded, the $G_2$-Laplacian flow enjoys the same non-collapsing as the Ricci flow:

\begin{corollary}[Chen's non-collapsing for the $G_2$-Laplacian flow with bounded scalar curvature \cite{Chen}]
Let $\phi(t)$ be a closed $G_2$-Laplacian flow on a compact 7-manifold defined for $t \in [0,s)$ with $s<\infty$. Suppose that the scalar curvature is uniformly bounded, then there exists $\kappa>0$ and $\rho>0$ such that for all $B_{g(t)}(x,r)$ with $r < \rho$, \[\Vol_{g(t)}B_{g(t)}(x,r) \geqslant\kappa r^7\]
\end{corollary}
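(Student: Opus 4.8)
The plan is to verify that the hypotheses of Chen's theorem hold along the $G_2$-Laplacian flow whenever the scalar curvature is bounded, and then to exploit the sign of the $G_2$ scalar curvature to convert $\kappa$-non-collapsing relative to scalar curvature into unconditional non-collapsing of all sufficiently small balls. Chen's theorem requires a uniform $C^0$-bound on the tensor $\del_t \bm{g} + 2\RRic$, so the first step is simply to read this quantity off from the metric evolution equation~\eqref{metric-evolution-G2}. Rearranging that equation gives
\[
\del_t \bm{g}_{ij} + 2\RRic_{ij} = -\tfrac{2}{3}|\TT|^2 \bm{g}_{ij} - 4\TT_{i}^{\phantom{i}k}\TT_{kj},
\]
so the entire right-hand side is built algebraically from the torsion $\TT$.

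The second step is the only place any estimate is needed, and it is elementary. Both terms on the right are controlled by $|\TT|^2$: the norm of $\tfrac{2}{3}|\TT|^2 \bm{g}$ is $\tfrac{2\sqrt{7}}{3}|\TT|^2$ since $|\bm{g}|_{\bm{g}} = \sqrt{7}$ on the $7$-manifold $M \times \mathbb{T}^3$, while viewing $\TT$ as a skew endomorphism and using $\|A^2\|\leqslant \|A\|^2$ shows that $\TT_{i}^{\phantom{i}k}\TT_{kj}$ has norm at most $|\TT|^2$. Hence $\|\del_t \bm{g} + 2\RRic\|_{C^0} \leqslant C|\TT|^2$ for a universal constant $C$. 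By Bryant's identity~\eqref{G2-scalar-curvature} we have $|\TT|^2 = -\RR(\phi)$, so the assumed uniform bound on scalar curvature yields exactly the bound $\|\del_t \bm{g} + 2\RRic\|_{C^0} \leqslant K$ required to invoke Chen's theorem. This produces $\kappa>0$ such that $\bm{g}(t)$ is $\kappa$-non-collapsed relative to scalar curvature, i.e.\ the conclusion of Theorem~\ref{perelman-non-collapsing} holds for all balls of radius $r < \sqrt{s}$.

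The final step removes the conditional hypothesis $\sup_{B(x,r)} \RR \leqslant r^{-2}$ that appears in that conclusion. Here the $G_2$ setting is in fact cleaner than the general Ricci flow discussion surrounding~\eqref{non-collapse}: because $\RR(\phi) = -|\TT|^2 \leqslant 0$ everywhere, one automatically has $\sup_{B(x,r)} \RR \leqslant 0 \leqslant r^{-2}$ for \emph{every} ball and every radius. The scalar-curvature hypothesis is therefore vacuous, and non-collapsing applies to all balls of radius $r < \rho := \sqrt{s}$, giving $\Vol_{\bm{g}(t)} B_{\bm{g}(t)}(x,r) \geqslant \kappa r^7$ as claimed.

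I do not expect any serious obstacle here; this is a routine corollary of Chen's theorem. If anything requires care it is the $C^0$-bound of the second step, but that reduces to the single observation that the two correction terms to Ricci flow in~\eqref{metric-evolution-G2} are quadratic in the torsion and hence, through~\eqref{G2-scalar-curvature}, are controlled precisely by the scalar curvature we have assumed bounded.
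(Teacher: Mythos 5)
Your proposal is correct and follows essentially the same route as the paper: read off $\del_t \bm{g} + 2\RRic$ from the evolution equation~\eqref{metric-evolution-G2}, note it is quadratic in $\TT$ and hence controlled by $|\RR| = |\TT|^2$ via~\eqref{G2-scalar-curvature}, and invoke Chen's theorem. Your final step is marginally cleaner than the paper's (which, following the Ricci flow discussion around~\eqref{non-collapse}, takes $\rho$ depending on the scalar curvature bound): you observe that $\RR \leqslant 0$ makes the hypothesis $\sup_{B(x,r)}\RR \leqslant r^{-2}$ vacuous, so $\rho = \sqrt{s}$ works outright.
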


Together with the curvature bounds from \eqref{G2-shi}, this enables us to take a limit. This time, we do not even need to take a limit of the whole flow, merely of the closed $G_2$-structures $\tilde{\phi}_i(0)$. (In the case of Ricci flow, we needed a limiting \emph{flow} to show the limit was actually Ricci flat, in the $G_2$-case this will follow from a different argument.) The non-collapsing and  curvature bounds show that the associated metrics converge (in the sense of Cheeger--Gromov \cite{P}) to a complete scalar-flat metric $\tilde{g}$ on a 7-manifold $Y$ with Euclidean volume growth. 

We now explain how to take a limit of the $G_2$-structures themselves. By equation (2.13) in \cite{LW1}, for any $G_2$-structure $\phi$, we have $\bm\nabla \phi = \TT * \phi$, where $*$ denotes an algebraic contraction of $\TT \otimes \phi$ defined using the metric. It follows that there is a constant $C$ such that for any $G_2$-structure, $\| \phi \|_{C^{k+1}} \leqslant C\| \TT \|_{C^k}$ (where $C$ depends on $k$, and $k\geqslant 1$). So, in view of~\eqref{G2-shi}, the forms $\tilde\phi_i$ are bounded in $C^{k+1}$ and so, by passing to a subsequence, we can assume they converge to a closed $G_2$-structure $\tilde{\phi}$ on the limit $Y$ (after pulling back by the diffeomorphisms involved in the Cheeger--Gromov convergence). The metric associated to $\tilde{\phi}$  is simply $\tilde{g}$ and, since this is scalar flat, by equation~\eqref{G2-scalar-curvature} $\tilde{\phi}$ is actually torsion free.

We summarise this in the following result, which is in complete analogy with what occurs for a Ricci flow with bounded scalar curvature. 

\begin{theorem}
Let $\phi(t)$ be a path of closed $G_2$-structures on a compact 7-manifold $X$ solving the $G_2$-Laplacian flow, defined on a maximal time interval $t \in [0,s)$ with $s< \infty$. Suppose, moreover, that the scalar curvature is bounded for all $t \in [0,s)$. Then the parabolic rescaling~\eqref{G2-parabolic-rescale} converges to a limiting torsion free $G_2$-structure $(Y, \tilde{\phi})$ on a manifold $Y$. The associated metric is complete, not flat, with holonomy contained in $G_2$ (and so, in particular, is Ricci flat), with Euclidean volume growth. 
\end{theorem}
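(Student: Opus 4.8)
The argument assembles, in the $G_2$ setting, the standard singularity analysis for a Ricci flow with bounded scalar curvature, and most of the pieces have been recalled above. The plan is to blow up at the singularity, extract a Cheeger--Gromov limit of the rescaled metrics, promote this to a limit of the $G_2$-forms, and identify the limit using $\RR(\phi) = -|\TT|^2$. To set up the blow-up, note that since $s < \infty$ is maximal, Theorem~\ref{LW-extension} gives $\limsup_{t \to s}\Lambda(\phi(t)) = \infty$ for $\Lambda$ as in~\eqref{Lambda}. Choosing times $t_i \to s$ and points $x_i \in X$ that realise the space-time supremum of $\Lambda$ on $X \times [0,t_i]$, and writing $\Lambda_i = \Lambda(\phi(t_i)) \to \infty$, I form the parabolic rescaling~\eqref{G2-parabolic-rescale}. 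This yields $G_2$-Laplacian flows on $[-\Lambda_i t_i, 0]$ with $\Lambda(\tilde\phi_i(t)) \leqslant \Lambda(\tilde\phi_i(0)) = 1$, the value $1$ being attained at the basepoint $(x_i,0)$.

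The two ingredients needed for a pointed limit are uniform local derivative bounds and non-collapsing. The first is supplied by Lotay--Wei's Shi-type estimates~\eqref{G2-shi}, bounding all covariant derivatives of $\RRm$ and $\TT$ on compact time intervals. The second is where the scalar-curvature hypothesis genuinely enters, and is the main obstacle: unlike for Ricci flow there is no unconditional non-collapsing for the $G_2$-Laplacian flow. Here I would combine the bound on $\RR$ with the metric evolution~\eqref{metric-evolution-G2}: since $\del_t \bm g + 2\RRic$ is, by~\eqref{metric-evolution-G2}, algebraically controlled by $|\TT|^2 = -\RR$, boundedness of $\RR$ makes $\|\del_t\bm g + 2\RRic\|_{C^0}$ uniformly bounded, so Chen's non-collapsing theorem applies and yields uniform $\kappa, \rho > 0$ with $\Vol_{\bm g(t)} B(x,r) \geqslant \kappa r^7$ for every $x$ and all $r < \rho$.

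With bounded geometry and non-collapsing in hand, the Cheeger--Gromov compactness theorem extracts, after passing to a subsequence and pulling back by the relevant diffeomorphisms, a complete pointed limit metric $\tilde g$ on a $7$-manifold $Y$; applying non-collapsing at all radii below the rescaled threshold gives Euclidean volume growth. To lift this to the forms I would use $\bm\nabla\phi = \TT * \phi$: the pointwise norm of a $G_2$-form is constant, so this identity together with~\eqref{G2-shi} bootstraps uniform $C^{k+1}$ bounds on the $\tilde\phi_i$ for every $k$. Hence a subsequence converges to a closed $G_2$-structure $\tilde\phi$ on $Y$ whose associated metric is $\tilde g$. In contrast with Ricci flow, no limiting flow is required, since torsion-freeness will follow from a pointwise identity rather than from the evolution of scalar curvature.

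It remains to identify $\tilde\phi$. The rescaled scalar curvature tends to zero, since $\RR$ scales by a negative power of $\Lambda_i \to \infty$ while $|\RR|$ stays bounded on the original flow; hence $\tilde g$ is scalar flat, and~\eqref{G2-scalar-curvature} forces $|\TT(\tilde\phi)|^2 = -\RR(\tilde\phi) = 0$, so $\tilde\phi$ is torsion free. Therefore $\bm\nabla\tilde\phi = 0$, the holonomy of $\tilde g$ is contained in $G_2$, and $\tilde g$ is Ricci flat. That $\tilde g$ is not flat follows from the normalisation: at the basepoint $(|\RRm|^2 + |\bm\nabla\TT|^2)^{1/2} = 1$, and since $|\bm\nabla\TT|$ scales to zero together with the torsion, the limit satisfies $|\RRm(\tilde g)| = 1$ there. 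This produces the complete, non-flat, Ricci-flat metric with holonomy in $G_2$ and Euclidean volume growth asserted by the theorem.
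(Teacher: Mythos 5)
Your proposal is correct and follows essentially the same route as the paper: the Lotay--Wei extension theorem and Shi-type estimates set up the blow-up, Chen's non-collapsing theorem (applicable because bounded $\RR$ controls $\|\del_t\bm g + 2\RRic\|_{C^0}$ via the metric evolution equation) allows a Cheeger--Gromov limit with Euclidean volume growth, the identity $\bm\nabla\phi = \TT * \phi$ upgrades this to convergence of the forms, and $\RR(\phi) = -|\TT|^2$ identifies the limit as torsion free without needing a limiting flow. Your explicit justification of non-flatness---that $|\bm\nabla\TT|$ vanishes in the $C^\infty$ limit because the limiting torsion vanishes identically, so the normalisation $\Lambda = 1$ forces $|\RRm(\tilde g)| = 1$ at the basepoint---is a correct filling-in of a point the paper leaves implicit.
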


We now turn to the case of a hypersymplectic flow. Let $\underline{\omega}(t)$ be a solution to the hypersymplectic flow, defined on a compact 4-manifold $M$, on a maximal time interval $t \in [0,s)$. Suppose moreover that the 7-dimensional scalar curvature $\RR(\phi(t))$ is bounded uniformly in $t$. We let $t_i \to s$ and $\Lambda_i$ be as before, for the general discussion of the $G_2$-Laplacian flow. We then consider the parabolic rescaling
\begin{equation}
\label{hypersymplectic-parabolic-rescale}
\tilde{\underline{\omega}}_i(t) 
	=
		\Lambda_i\, \underline{\omega} (\Lambda_i^{-1}t + t_i)
\end{equation}
Under this rescaling, the \emph{four}-dimensional metric $g_{\tilde{\underline{\omega}}_i}(t)$ scales as in~\eqref{metric-parabolic-rescale}. The $G_2$-structures and seven-dimensional metrics, however, do not scale as in~\eqref{G2-parabolic-rescale}, because only the 4-manifold directions are stretched whilst the $\mathbb{T}^3$ directions are unchanged.

The singularity limit we obtain is given by the following theorem.

\begin{theorem}[\cite{FY}]\label{hypersymplectic-bubble}
Let $\underline{\omega}(t)$ be a solution to the hypersymplectic flow, defined on a compact 4-manifold $M$, on a maximal time interval $t \in [0,s)$ with $s < \infty$. Suppose moreover that the 7-dimensional scalar curvature $\RR(\phi(t))$ is bounded uniformly in $t$. Them the parabolic rescaling~\eqref{hypersymplectic-parabolic-rescale} converges to a limiting hyperk\"ahler 4-manifold $(N, \tilde{\underline{\omega}})$. The metric is complete, its curvature is non-zero, with finite $L^2$-norm, and it has Euclidean volume growth. In other words, it is a non-trivial ALE gravitational instanton.  
\end{theorem}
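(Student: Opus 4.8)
The plan is to run the blow-up analysis in four dimensions, the key point being that the rescaling~\eqref{hypersymplectic-parabolic-rescale} stretches the base metric $g_{\underline{\omega}}$ but leaves the fibre data $Q$ scale-invariant, so that $Q$ becomes \emph{constant} in the limit and the bubble drops from a torsion-free $G_2$-structure on a $7$-manifold to a hyperk\"ahler structure on a $4$-manifold. The first step is to record two a priori estimates on $[0,s)$. Since $\RR(\phi(t))=-|\TT|^2$ is bounded, inequality~\eqref{heat-tr} and the maximum principle bound $\tr Q$ on the finite interval $[0,s)$; together with $\det Q=1$ this confines the eigenvalues of $Q$ to a fixed compact subset of $(0,\infty)$. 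Feeding $\tr Q\leqslant C$ and $|\TT|^2\leqslant K$ into~\eqref{heat-dQ}, the good term $-\tfrac1{16}|\diff Q|^4_Q$ dominates and a second maximum-principle argument yields $|\diff Q|^2_Q\leqslant C'$ uniformly. As $Q$ is unchanged by the rescaling while $g_{\underline{\omega}}\mapsto\Lambda_i g_{\underline{\omega}}$, the energy density obeys $|\diff Q|^2_Q\mapsto\Lambda_i^{-1}|\diff Q|^2_Q\leqslant C'\Lambda_i^{-1}\to0$. This is the crucial estimate.

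Next I would establish convergence of~\eqref{hypersymplectic-parabolic-rescale} directly in four dimensions and then identify the limit. View $M\times\mathbb{T}^3\to M$ as a Riemannian submersion: its horizontal distribution $TM$ is integrable, so the O'Neill $A$-tensor vanishes and the four-dimensional curvature is exactly the horizontal block of $\RRm$, giving $|\Rm(g_{\underline{\omega}})|\leqslant|\RRm|$. Hence $\Lambda_i=\Lambda(\phi(t_i))$ bounds the rescaled four-dimensional curvature, Lotay--Wei's Shi estimates \cite{LW1} bound all its derivatives, and Chen's non-collapsing for $\bm g$ \cite{Chen}---applied at small scales and factored through the fibres of constant volume $\sqrt{\det Q}=1$---descends to $\Vol_{g_{\underline{\omega}}}B(x,r)\geqslant\kappa r^4$, which after rescaling holds at every scale. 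A pointed Cheeger--Gromov limit, centred at the maximisers of $\Lambda$, then yields a complete four-manifold $(N,\tilde g)$, and the bounded self-dual forms $\tilde\omega_j$ (with $|\tilde\omega_j|^2=2\tilde Q_{jj}$) converge to a limiting hypersymplectic triple $\tilde{\underline{\omega}}$. To see $N$ is hyperk\"ahler I feed the same flow into the general $G_2$ bubbling theorem just proved: the uniform rescaling~\eqref{G2-parabolic-rescale} converges to a complete, non-flat, Ricci-flat, torsion-free $G_2$-limit $(Y,\hat\phi)$ of Euclidean volume growth, and since $|\diff Q|^2_Q\to0$ the fibre second fundamental form vanishes in the limit, so $Y$ splits isometrically as $N\times\R^3$ with $N$ the base just constructed. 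Being a complete torsion-free hypersymplectic structure, $\tilde{\underline{\omega}}$ is hyperk\"ahler by Theorem~\ref{torsion free-hyperkahler}.

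The stated geometric properties are now inherited from the splitting $Y=N\times\R^3$: completeness and Ricci-flatness are immediate, non-triviality of the curvature of $N$ follows from non-flatness of $(Y,\hat\phi)$, and Euclidean volume growth of $N$ follows from that of $Y$ together with $\Vol_{Y}B(r)\asymp\Vol_{N}B(r)\,r^3$. It remains to prove the finite-energy statement $\int_N|\Rm(\tilde g)|^2\,\dvol<\infty$. For this I would adapt Simon's four-dimensional $L^2$-curvature estimate (Theorem~\ref{simon-L2-curvature}) to the metric evolution~\eqref{metric-evolution}, which differs from Ricci flow only by the lower-order terms built from $\diff Q$ and $\underline{\tau}$; these are controlled uniformly by the first-step bounds on $\tr Q$, $|\diff Q|_Q$ and $|\TT|$, yielding $\int_M|\Rm(g_{\underline{\omega}}(t))|^2\,\dvol\leqslant C$. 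Since this quantity is scale-invariant in dimension four it passes to the Cheeger--Gromov limit. A complete hyperk\"ahler $4$-manifold with finite $L^2$ curvature and maximal volume growth is ALE by Bando--Kasue--Nakajima, giving the claimed non-trivial ALE gravitational instanton.

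The main obstacle is this last $L^2$-curvature bound: Simon's theorem is proved for genuine Ricci flow, and transplanting its argument requires absorbing the extra $\diff Q$- and $\underline{\tau}$-terms in~\eqref{metric-evolution} into the energy estimates uniformly in time, which is exactly where the bounds on $\tr Q$ and $|\diff Q|_Q$---rather than the scalar-curvature bound alone---are indispensable. A secondary point needing care is the rigorous identification of the splitting $Y=N\times\R^3$, i.e.\ showing that $|\diff Q|_Q\to0$ forces the O'Neill tensors to vanish and the fibre directions to integrate to a flat Euclidean factor.
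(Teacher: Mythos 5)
Your outline follows the same overall strategy the paper attributes to \cite{FY}: the scalar curvature bound gives $\sup|\TT|^2<\infty$, the heat inequalities \eqref{heat-tr} and \eqref{heat-dQ} plus the maximum principle give uniform bounds on $\tr Q$ and $|\diff Q|^2_Q$ on $[0,s)$, the limit manifold is extracted by shuttling curvature and non-collapsing information between $M$ and $M\times\mathbb{T}^3$, the scale-invariance of $Q$ forces it to become constant in the blow-up, and finite energy is obtained by adapting Simon's four-dimensional $L^2$ estimate. The one place you genuinely diverge is the identification of the hyperk\"ahler structure: you pass to the \emph{uniformly} rescaled $7$-manifold, argue that $|\diff Q|^2_Q\to 0$ splits the limit as $N\times\R^3$, and invoke Theorem~\ref{torsion free-hyperkahler} (which appears later in this paper but is logically independent, so this is anachronistic rather than circular --- and in fact unnecessary, since $\diff Q=0$ in the limit already). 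The paper instead works with the rescaled forms on the $4$-manifold and is explicit that what is needed is a uniform \emph{$C^2$} bound on $Q$, not merely the $C^0$ bound on $|\diff Q|_Q$ you establish. This is a real gap in your version: a pointwise bound on $|\diff Q|_Q$ does not give $C^\infty_{loc}$ convergence of the rescaled triples $\tilde{\underline{\omega}}_i$, nor a rigorous isometric splitting of $Y$; you need derivative control on $Q$, either proved directly as in \cite{FY} or pulled down from the bounds on $\bm\nabla^k\TT$ via $\bm\nabla\phi=\TT*\phi$ --- and translating between your two different rescalings itself costs derivatives of $Q$.

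Two further points. The descent of Chen's non-collapsing from $7$ to $4$ dimensions is not simply ``dividing by the fibre volume'': for small $r$ the bound $\Vol B_7(r)\geqslant\kappa r^7$ is \emph{weaker} than $\kappa r^4$, and one must use that small $7$-balls are comparable to products $B_4(r)\times B_{\R^3}(r)$ (which needs the eigenvalue pinching of $Q$ you established) to convert $r^7/r^3$ into $r^4$, treating the intermediate scales by monotonicity. And you correctly locate the $L^2$ curvature bound as the main obstacle --- the paper confirms it is one of the two genuinely new ingredients of \cite{FY} --- but it is not a matter of uniformly absorbing lower-order terms into Simon's estimate: Simon's proof is itself a cascade in which each integrated quantity's bad term is cancelled by the next one's good term, and \cite{FY} has to redesign the whole cascade around the torsion terms. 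In short, your proposal is a faithful roadmap with the two hard steps correctly identified but not executed.
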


One approach to proving this theorem would be to prove an extension result and Shi estimates directly for the hypersymplectic flow. Instead, the argument in \cite{FY} leverages the known estimates for $G_2$-Laplacian flow. This means the discussion has to pass back and forth between the 4-manifold $M$ and the 7-manifold $M \times \mathbb{T}^3$, but there is nothing essentially new involved here in producing the limiting Riemannian manifold.  The two genuinely new ideas needed are to show firstly that the limit has finite energy and secondly to show that is hyperk\"ahler. 

The proof of finite energy is inspired by Simon's proof in the case of Ricci flow. The idea is to compute time derivatives of the integral of various curvature quantities. Each integral produces a ``good'' term with the right sign and a ``bad'' term with the wrong sign. The next integral has a good term which cancels the bad one from the previous integral until eventually one is able to close the circle. Compared with Ricci flow, the evolution equations for hypersymplectic flow are more complicated, because of the torsion terms, and so one has to be correspondingly a little more imaginative with the choice of quantities to integrate. 

To show that the limit is hyperk\"ahler one must prove that the rescaled forms $\tilde{\omega}_i$ pass to the limit. It turns out that to do this it suffices to prove that $Q(t)$ is bounded in $C^2$ uniformly in $t$. This ultimately comes down to judicious use of the evolution inequalities~\eqref{heat-tr} and~\eqref{heat-dQ}. Again, we refer the reader to the article \cite{FY} for the details.

\subsubsection*{Step 2. Ruling out ALE gravitational instanton singularity models}

We are now in a position to complete the proof of Theorem~\ref{FY}. We begin with a compact hypersymplectic flow for $t \in [0,s)$ with $s< \infty$ and for which the 7-dimensional scalar curvature is bounded. We  assume for a contradiction that the flow does not extend to $t=s$. Then Theorem~\ref{hypersymplectic-bubble} enables us to take a limit via parabolic rescaling to obtain an ALE gravitational instanton which, moreover, is not simply flat. Kronhiemer has classified these \cite{kron}. We will need a single important consequence of this classification:
\begin{theorem}[Kronheimer \cite{kron}]
Let $(N,h)$ be an ALE gravitational instanton, i.e., a complete hyperk\"ahler 4-manifold with finite energy and with Euclidean volume growth. If $h$ is not flat, then there is an embedded 2-sphere $S \subset N$ which is holomorphic for one of the hyperk\"ahler complex structures.
\end{theorem}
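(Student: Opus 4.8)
The plan is to fix one of the three complex structures and realise $(N,h)$ as the minimal resolution of a Kleinian singularity, after which the $2$-sphere drops out from the classical theory of rational double points. Choose a hyperk\"ahler complex structure, say $I$, and let $\omega_J + i\,\omega_K$ be the associated holomorphic symplectic form; this trivialises the canonical bundle, so $K_N = \O_N$. Since $(N,h)$ is ALE, asymptotic to $\R^4/\Gamma = \C^2/\Gamma$ for a finite $\Gamma \subset \SU(2)$ acting freely on $S^3$, the complex surface $(N,I)$ looks at infinity like $\C^2/\Gamma$ with a ball removed. I would first construct a proper holomorphic contraction $\pi\colon N \to \C^2/\Gamma$ onto the affine model: concretely, take the ring $R$ of $I$-holomorphic functions on $N$ of polynomial growth, use the ALE asymptotics together with a Hartogs-type extension at infinity to identify $R$ with the invariant ring $\C[z_1,z_2]^\Gamma$, and let $\pi$ be evaluation on $R$, so that $\pi$ restricts to a biholomorphism over $(\C^2/\Gamma)\setminus\{0\}$.

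Next I would use the non-flatness hypothesis to show that $\pi$ is not an isomorphism. If the exceptional set $E = \pi^{-1}(0)$ were empty, then $\pi$ would be a biholomorphism of the smooth manifold $N$ onto $\C^2/\Gamma$; smoothness forces $\Gamma = \{1\}$ and hence $(N,I)\cong\C^2$. The Gauss--Bonnet formula for ALE Ricci-flat $4$-manifolds, $\int_N |\Rm|^2\,\dvol = 8\pi^2\big(\chi(N) - |\Gamma|^{-1}\big)$, would then give $\int_N|\Rm|^2 = 8\pi^2(1-1) = 0$, so $h$ would be flat, contrary to assumption. Therefore $E$ is a non-empty compact analytic subset, and since $\pi$ is biholomorphic off $E$ it has pure dimension $1$: it is a compact holomorphic curve contracted by $\pi$ to the point $0$.

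It then remains to identify the irreducible components of $E$ as smooth rational curves. By Grauert's contractibility criterion the intersection form of $E$ is negative definite, so each component $C$ has $C^2 \leqslant -1$. Adjunction together with $K_N = \O_N$ gives $2p_a(C) - 2 = C^2 + K_N\cdot C = C^2$, which forces $C^2 = -2$ and $p_a(C)=0$; hence $C$ is a smooth embedded $\P^1 \cong S^2$, holomorphic for $I$. This produces the desired holomorphic $2$-sphere.

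I expect the genuine difficulty to lie entirely in the first step: rigorously constructing the contraction $\pi$ and matching the asymptotic complex geometry of $(N,I)$ with that of $\C^2/\Gamma$. Controlling holomorphic functions of prescribed growth, establishing properness and connectedness of fibres, and pinning down the identification at infinity is the analytic heart of the matter --- this is essentially Kronheimer's Torelli-type theorem, whereas everything downstream (Grauert's criterion and adjunction) is formal.
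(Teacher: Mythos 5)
The paper gives no proof of this statement---it is quoted verbatim from Kronheimer's classification \cite{kron}---so I can only judge your argument on its own terms, and it has a genuine gap in the first step. You fix an \emph{arbitrary} complex structure $I$ and assert that the ring $R$ of polynomial-growth $I$-holomorphic functions is $\C[z_1,z_2]^\Gamma$, so that $\pi\colon N\to\C^2/\Gamma$ has non-empty exceptional set whenever $\Gamma\neq\{1\}$. What the ALE asymptotics actually give is that $R$ is a \emph{filtered} ring whose associated graded is $\C[z_1,z_2]^\Gamma$: the affine model $\mathrm{Spec}(R)$ is a \emph{deformation} of the Kleinian singularity, not necessarily the singularity itself. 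Already for Eguchi--Hanson with the complex structure $I_2$ (instead of $I_1$, for which the zero section of $T^*\P^1$ is holomorphic), $(N,I_2)$ is biholomorphic to a smooth affine quadric $\{xy=z^2-\epsilon\}$, which contains no compact holomorphic curve at all; your $\pi$ is then a biholomorphism, $E=\emptyset$, yet $h$ is not flat. The Gauss--Bonnet step does not rescue you, because $E=\emptyset$ does not force $N\cong\C^2$: one gets $\chi(N)=1+\operatorname{rank}H_2(N)>1$ and $\int_N|\Rm|^2=8\pi^2(\chi(N)-|\Gamma|^{-1})>0$ with no contradiction. So as written the argument proves a false statement (``the sphere is holomorphic for \emph{any} of the complex structures'').

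The missing idea is that the complex structure must be \emph{chosen} adapted to the periods, which is where the real content of \cite{kron} lies. One needs to know that $H_2(N;\Z)\neq 0$ and contains a class $\Sigma$ with $\Sigma^2=-2$ (this already uses the identification of the diffeomorphism type with the minimal resolution of $\C^2/\Gamma$), and that the period vector $(\langle[\omega_1],\Sigma\rangle,\langle[\omega_2],\Sigma\rangle,\langle[\omega_3],\Sigma\rangle)$ is non-zero; rotating within the $S^2$-family of complex structures so that $\Sigma$ becomes a $(1,1)$-class pairing positively with the K\"ahler form, one must then show $\Sigma$ is actually represented by an $I$-holomorphic curve on this non-compact surface---for which Kronheimer uses the explicit hyperk\"ahler-quotient construction together with the Torelli theorem rather than anything formal. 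Only for such an adapted $I$ does your contraction acquire a non-empty exceptional curve, after which the Grauert-plus-adjunction endgame (which is correct) delivers the embedded $-2$-sphere. In short, the ``analytic heart'' you deferred is not merely the construction of $\pi$ but the period-theoretic selection of $I$; without it the first two steps fail.
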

From here the idea for deriving a contradiction is simple. The sphere $S \subset N$ is the limit of a sequence of spheres $S_{i} \subset M$ in the compact 4-manifold. We must have 
\begin{equation}
\int_{S_i} \omega_j(t_i) \to 0
\label{symplectic-classes}
\end{equation}
as $i \to \infty$. This is because the rescaled forms $\tilde{\omega}_j(t_i)$  converge to the hyperk\"ahler structure and so the rescaled integral has a finite limit. To see why this should give a contradiction, suppose for a moment that the $S_i$ were all homologous. Then the above integral would be fixed and so vanish for $j=1,2,3$. But in the limit $S$ is holomorphic for one of the hyperk\"ahler structures and so for large $i$, $S_i$ must be symplectic for one of the hyper\emph{symplectic} structures, meaning the integral for at least one value of $j$ must always be strictly positive.   

To complete the argument one must show that the classes $[S_i]$ actually take on only finitely many different values. To see this note that in the hyperk\"ahler limit, $[S]^2=-2$ by the adjunction formula. Since a neighbourhood of $S_i$ is diffeomorphic to a neighbourhood of $S$, it follows that for all $i$, $[S_i]^2=-2$. Meanwhile, by~\eqref{symplectic-classes} the evaluation of the symplectic classes is bounded. Since they generate $b_+$, we see that the positive and  part of $[S_i]$ is bounded. Since $[S_i]^2 = -2$, it follows that the negative part is also bounded and so $[S_i]$ lies in a bounded subset $F \subset H_2(M,\Z)$, which is hence finite. We know  that $S_i$ is symplectic for, say $\omega_1$ for all large $i$. Now $\omega_1$ has a smallest strictly positive value $\eta>0$ on $F \subset H_2(M,\Z)$  and so 
\[
\int_{S_i} \omega_1(t_i) \geqslant\eta >0
\]
for all large $i$, which contradicts~\eqref{symplectic-classes}. This completes the proof of Theorem~\ref{FY}. 


\section{New results}\label{new-results}

 \subsection{Torsion free hypersymplectic structures}

In this subsection we do not assume the manifold $M$ is compact. A torsion-free closed $G_2$-structure determines a Riemannian metric with  holonomy group contained in $G_2$ and so in particular the metric is Ricci flat. By contrast, a torsion free hypersymplectic structure in dimension $4$ is not necessarily hyperk\"ahler. To give examples, we recall a construction due to Donaldson \cite[Section 3]{Don2}. 

\begin{proposition}[Donaldson \cite{Don2}]\label{Donaldson}
	Any $S^1$-invariant torsion-free hypersymplectic structure $\underline{\omega}$ is (locally) determined by a convex potential function
	$u\colon P\to \mathbb{R}$ defined on an open subset $P \subset \R^3$,  and a smooth function $S: P\to \mathbb{R}^+$ satisfying
	$$\det \left( \frac{\partial^2 u}{\partial x^i \partial x^j} \right)=1;$$
	and 
	$$U^{ij}S_{ij}=0$$
	via the formula
	\[
	\omega_i 
	= 
	\alpha\wedge \dd x^i 
	+\frac{1}{2}S U^{ij}\epsilon_{jkl}\dd x^k\wedge\dd x^l\;\;, \; i=1, 2, 3
	\]
	where $\left(U^{ij} \right) = \left( \frac{\partial^2 u}{\partial x^i \partial x^j} \right)^{-1}$ and $S_{ij}=\frac{\partial^2 S}{\partial x^i \partial x^j}$. The function $\frac{1}{\sqrt{S}}$ measures the length of the $S^1$-orbit under the associated metric $g_{\underline\omega}$. The metric $g_{\underline\omega}$ is hyperk\"ahler if and only $\left( \frac{\partial^2 u}{\partial x^i \partial x^j} \right)$ is constant. 
\end{proposition}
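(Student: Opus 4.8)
The plan is to work on the open dense set where the generating Killing field $V$ of the $S^1$-action is nonzero, and to produce the coordinates $x^i$, the potential $u$ and the function $S$ directly from the geometry, reading off the two PDEs from the closed, self-dual and torsion-free conditions in turn.

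\emph{Coordinates and connection.} Since each $\omega_i$ is closed and $S^1$-invariant, $\diff(\iota_V\omega_i)=\mathcal L_V\omega_i-\iota_V\diff\omega_i=0$, so locally $\iota_V\omega_i=\dd x^i$ for functions $x^i$; these are the components of the hypersymplectic moment map and give coordinates on the local orbit space $P\subset\R^3$. I would fix the invariant connection $1$-form $\alpha=V^\flat/|V|^2$, so that $\iota_V\alpha=1$ and the horizontal distribution $\ker\alpha$ is the $g_{\underline\omega}$-orthogonal complement of $V$; since $\iota_V\dd x^i=\iota_V\iota_V\omega_i=0$, the $\dd x^i$ are horizontal and $g_{\underline\omega}$ splits as $|V|^2\alpha\otimes\alpha+h$, with $h$ the orbit-space metric in the coordinates $x^i$.

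\emph{The ansatz from self-duality.} Writing $\Omega_j=\tfrac12\epsilon_{jkl}\dd x^k\wedge\dd x^l$, every invariant $2$-form with $\iota_V\omega_i=\dd x^i$ has the shape $\omega_i=\alpha\wedge\dd x^i+\beta_i$ with $\beta_i$ horizontal, and self-duality of $\omega_i$ pins down $\beta_i$ in terms of its vertical–horizontal part: in a unit coframe it forces $\beta_i=\tfrac{1}{|V|}\star_h\dd x^i=b_{ij}\Omega_j$ with $b_{ij}=\tfrac{\sqrt{\det h}}{|V|}h^{ij}$, which is automatically symmetric. Pairing $\iota_V\omega_i$ with $\iota_V\omega_j$ shows $h^{ij}=|V|^2Q_{ij}$, and the normalisation $\det Q=1$ that defines $\mu_{\underline\omega}$ fixes $\sqrt{\det h}$ in terms of $|V|$; combining these gives $b_{ij}=S\,Q_{ij}$ for $S$ a constant multiple of $|V|^{-2}$, so that $1/\sqrt S$ is, up to a universal factor, the orbit length. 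At this point I set $U^{ij}:=Q_{ij}$, so that $\omega_i=\alpha\wedge\dd x^i+\tfrac12 S\,U^{ij}\epsilon_{jkl}\dd x^k\wedge\dd x^l$ already has the claimed form; it remains to see that $U^{ij}$ is the inverse Hessian of a convex $u$ with $\det(\partial^2u)=1$, and that $S$ satisfies the linear equation.

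\emph{Reading off the two PDEs.} Torsion-freeness of $\phi$ is equivalent to $\diff^*(Q^{-1}\underline\omega)=0$, and since each closed self-dual $\omega_k$ is harmonic the identity $\diff^*(f\omega)=f\diff^*\omega-\iota_{\nabla f}\omega$ reduces this to $\sum_k\iota_{\nabla W_{jk}}\omega_k=0$ with $W:=Q^{-1}$. The gradient $\nabla W_{jk}$ is horizontal, so substituting the ansatz and splitting into the $\alpha$-component and the $\dd x$-components gives two conditions. Using $\partial W=-W(\partial Q)W$, the $\alpha$-part collapses to $\partial_jQ_{ij}=0$, and the horizontal part is, after the same substitution and a contraction with $Q$, equivalent to $\partial_kW_{ij}=\partial_jW_{ik}$ — exactly the integrability condition for $W=Q^{-1}$ to be a Hessian. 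Hence locally there is a function $u$ with $\partial^2u=Q^{-1}$; convexity follows because $Q$ is positive definite, and the normalisation $\det Q=1$ becomes the Monge–Ampère equation $\det(\partial^2u)=1$, with $U^{ij}=Q_{ij}$ indeed the inverse Hessian. Finally, closedness of $\omega_i$ gives $\diff\alpha=\hat F_j\Omega_j$ with $\hat F_i=-\partial_j(SQ_{ij})$; using $\partial_jQ_{ij}=0$ this becomes $\hat F_i=-Q_{ij}\partial_jS$, and then $\diff^2\alpha=0$, i.e. $\partial_i\hat F_i=0$, yields $Q_{ij}S_{ij}=U^{ij}S_{ij}=0$, as claimed. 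Reassembling these facts reproduces the stated formula for $\omega_i$; the converse, that such $(u,S)$ generate a torsion-free hypersymplectic triple, follows by running the computation backwards.

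\emph{The hyperk\"ahler criterion and the main obstacle.} For the last assertion I would use that a hypersymplectic triple is hyperk\"ahler precisely when $Q$ is constant: if $Q\equiv\text{const}$ one rescales the $\omega_i$ by a fixed element of $\GL(3,\R)$ to a pointwise-orthonormal closed self-dual triple, which is hyperk\"ahler, and conversely the K\"ahler forms of a hyperk\"ahler metric have constant inner products. Since $\partial^2u=Q^{-1}$, this is exactly ``$\partial^2u$ constant''. The main obstacle, and the real content, is the middle step: verifying that the horizontal part of the torsion-free equation is \emph{identically} the Hessian-integrability condition $\partial_kW_{ij}=\partial_jW_{ik}$. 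This is the clean point that makes the potential $u$ exist at all — closedness and self-duality alone yield only the single scalar relation $\partial_i\partial_j(SQ_{ij})=0$ — and it must be extracted through the inverse-derivative identity and a careful contraction with $Q$; the accompanying bookkeeping of the normalising constants relating $|V|$, $S$, $\det h$ and $Q$ is the other place demanding care.
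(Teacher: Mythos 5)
Two preliminary remarks: the paper itself contains no proof of this proposition — it is quoted from Donaldson \cite{Don2} — so your argument can only be measured against the statement and against the way the paper uses it (notably in the Corollary that follows it). Most of your construction is correct, and the key computation checks out: with $\omega_i=\alpha\wedge\dd x^i+S\,Q_{ij}\Omega_j$, $\Omega_j=\tfrac12\epsilon_{jkl}\dd x^k\wedge\dd x^l$, the vertical ($\alpha$-) part of $\dd^*\bigl(Q^{-1}\underline\omega\bigr)=0$ does reduce to $\partial_jQ_{ij}=0$, and the horizontal part, after substituting $\partial W=-W(\partial Q)W$ and contracting with $Q\otimes Q$, is equivalent to the Hessian integrability $\partial_kW_{ij}=\partial_jW_{ik}$ for $W=Q^{-1}$, whence $W=\partial^2u$ locally, with $\det(\partial^2u)=1$ from $\det Q=1$; closedness of the $\omega_i$ together with $\dd(\dd\alpha)=0$ then gives $U^{ij}S_{ij}=0$. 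This is a sound route to the formula, and the identifications $U^{ij}=Q_{ij}$, $S=|V|^{-2}$ agree with the paper's remarks.

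The genuine gap is in the final assertion, the hyperk\"ahler criterion. You write that ``conversely the K\"ahler forms of a hyperk\"ahler metric have constant inner products,'' but the proposition asserts that the \emph{metric} $g_{\underline\omega}$ is hyperk\"ahler if and only if $\partial^2u$ is constant, and your argument only covers the case where the given triple $\underline\omega$ consists of constant linear combinations of the parallel K\"ahler forms. A priori, if $g_{\underline\omega}$ is hyperk\"ahler, the $\omega_i$ are merely closed self-dual forms for that metric, and such forms need not be parallel: writing $\omega_i=f_{ij}\eta_j$ in a parallel self-dual frame, closedness imposes only the first-order system $\dd f_{ij}\wedge\eta_j=0$, which admits non-constant local solutions. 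So ``$g_{\underline\omega}$ hyperk\"ahler $\Rightarrow Q$ constant'' needs an argument you have not supplied. The fix is exactly the mechanism the paper uses in its Corollary: for torsion-free structures one has \eqref{Ric-equation}, $\Ric\,g_{\underline\omega}=\tfrac14\langle\dd Q\otimes\dd Q\rangle_Q$ (equivalently $R=\tfrac14|\dd Q|_Q^2$), so a hyperk\"ahler — hence Ricci-flat — metric forces $|\dd Q|_Q^2=0$, i.e.\ $Q$ constant, i.e.\ $\partial^2u=Q^{-1}$ constant. Your ``if'' direction (constant $Q$ implies hyperk\"ahler, by normalising to a pointwise-orthonormal closed self-dual triple) is fine.
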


In the above formula, $\alpha=\dd t+ \alpha_1\dd x^1 + \alpha_2\dd x^2 + \alpha_3 \dd x^3$ satisfies $\dd \alpha = - \frac{1}{2}U^{ip}S_p \epsilon_{ijk}\dd x^j\wedge \dd x^k$. The associated Riemannian metric is 
\begin{equation}
\begin{split}
g_{\underline\omega}
& =
S^{-1}\alpha^2 + S u_{ij}\dd x^i\otimes \dd x^j\\
& =
S^{-1}\dd t^2 + S^{-1}\alpha_i \dd t\otimes \dd x^i + S^{-1}\alpha_i\dd x^i\otimes \dd t
+ (Su_{ij}+ S^{-1}\alpha_i\alpha_j)\dd x^i\otimes \dd x^j
\end{split}
\end{equation}
Corresponding to the matrix of metric in the coordinates $t, x_1, x_2, x_3$, the inverse matrix is

\[
\left( g_{\underline\omega}^{ij} \right)
=
\left(
\begin{array}{cc}
S + S^{-1}\alpha U\alpha^t & - S^{-1}\alpha U\\
- S^{-1}U\alpha^t & S^{-1}U
\end{array}
\right)
= 
\left(
\begin{array}{cc}
S+S^{-1}\alpha_iU^{ij}\alpha_j &  -S^{-1}\alpha_k U^{kj}\\
-S^{-1}\alpha_k U^{ik} & S^{-1}U^{ij}
\end{array}
\right)
\]
and the matrix $Q$ is precisely $U=\left( U^{ij} \right)$. There is a related interesting description of torsion-free $G_2$-structures with torus symmetry by Madsen and Swann \cite{MS}, generalizing the Gibbons--Hawking construction of $S^1$-invariant hyperk\"ahler metrics in dimension $4$.

\begin{corollary}[Donaldson \cite{Don2}]
	There exists a (local) torsion free hypersymplectic structure $\underline\omega$ whose associated metric $g_{\underline\omega}$ is not hyperk\"ahler. 
\end{corollary}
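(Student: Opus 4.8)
The plan is to exhibit an explicit example in the framework of Proposition~\ref{Donaldson}. By that result, an $S^1$-invariant torsion-free hypersymplectic structure is determined by a convex potential $u\colon P\to\R$ solving $\det(u_{ij})=1$ together with a positive function $S\colon P\to\R^+$ solving $U^{ij}S_{ij}=0$, where $(U^{ij})=(u_{ij})^{-1}$. The final clause of the proposition states that $g_{\underline\omega}$ is hyperk\"ahler if and only if the Hessian $(u_{ij})$ is \emph{constant}. So to produce a non-hyperk\"ahler torsion-free example it suffices to find a convex $u$ with $\det(u_{ij})=1$ whose Hessian is \emph{non}-constant, together with any admissible $S$ for it (e.g.\ the constant function $S\equiv 1$, which trivially satisfies $U^{ij}S_{ij}=0$ and is positive).

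Concretely, I would first reduce the problem to the two Monge--Amp\`ere-type conditions and then choose $u$ depending on a single variable to make matters trivial. For instance, take $P$ to be a suitable open box in $\R^3$ and set
\begin{equation}
u(x^1,x^2,x^3) = \tfrac{1}{2}\bigl((x^2)^2 + (x^3)^2\bigr) + f(x^1),
\end{equation}
so that the Hessian is diagonal with entries $f''(x^1),1,1$ and $\det(u_{ij}) = f''(x^1)$. The equation $\det(u_{ij})=1$ then forces $f''(x^1)=1$, which gives a constant Hessian and hence a hyperk\"ahler (in fact flat) structure---so this particular ansatz is too rigid. The point of recording it is to see that one needs the genuine coupling among the variables that the real Monge--Amp\`ere equation provides; I would instead look for a convex $u$ on an open $P\subset\R^3$ solving $\det(\mathrm{Hess}\,u)=1$ whose Hessian is demonstrably non-constant, for example by perturbing the quadratic $u_0=\tfrac12|x|^2$. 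Standard solvability of the Dirichlet problem for the Monge--Amp\`ere equation on a convex domain (Caffarelli--Nirenberg--Spruck) guarantees smooth convex solutions with prescribed non-quadratic boundary data, and such a solution cannot have constant Hessian since a constant positive-definite Hessian forces $u$ to be a quadratic polynomial.

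Having fixed such a $u$, I would then solve the linear second-order equation $U^{ij}S_{ij}=0$ for a positive $S$ on $P$; this is an elliptic equation (since $(U^{ij})$ is positive definite) and admits positive solutions---indeed the constant $S\equiv 1$ already works. Feeding $u$ and $S$ into the formula
\begin{equation}
\omega_i = \alpha\wedge\dd x^i + \tfrac{1}{2}S\,U^{ij}\epsilon_{jkl}\,\dd x^k\wedge\dd x^l, \qquad i=1,2,3,
\end{equation}
(where $\alpha$ is determined by $\dd\alpha = -\tfrac12 U^{ip}S_p\,\epsilon_{ijk}\,\dd x^j\wedge\dd x^k$, which is solvable since the right-hand side is closed by the defining equations for $u$ and $S$) produces, by Proposition~\ref{Donaldson}, a local torsion-free hypersymplectic structure with $Q=U$. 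Since its defining Hessian $(u_{ij})$ is non-constant, the last sentence of Proposition~\ref{Donaldson} shows $g_{\underline\omega}$ is not hyperk\"ahler, giving the desired example.

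I expect the main obstacle to be producing a convex solution of $\det(\mathrm{Hess}\,u)=1$ with provably non-constant Hessian while keeping everything explicit enough to verify. The cleanest route is non-constructive: invoke existence for the Monge--Amp\`ere Dirichlet problem with non-quadratic boundary data and argue that constant Hessian would force $u$ quadratic, contradicting the boundary condition. Alternatively, one can seek a closed-form solution (for instance of the form $u = g(x^1,x^2) + \tfrac12(x^3)^2$ reducing to a two-dimensional Monge--Amp\`ere equation $g_{11}g_{22}-g_{12}^2=1$) and check by inspection that its Hessian is non-constant. Either way the rest of the construction is purely mechanical, so the whole difficulty is concentrated in exhibiting one honest non-quadratic solution of the real homogeneous Monge--Amp\`ere equation.
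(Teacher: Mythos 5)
Your proposal is correct and follows essentially the same route as the paper: both reduce, via Proposition~\ref{Donaldson}, to exhibiting a non-quadratic convex solution of $\det(\mathrm{Hess}\,u)=1$ on some open set (together with any admissible $S$, e.g.\ $S\equiv 1$), and then conclude that the resulting torsion-free structure is not hyperk\"ahler. The only differences are in execution: the paper certifies the failure of the hyperk\"ahler condition by computing the scalar curvature $R=\frac{1}{4}|\dd Q|_Q^2=\frac{1}{4S}U^{ai}U^{bj}U^{ck}u_{abc}u_{ijk}\neq 0$ rather than quoting the final clause of the proposition as you do, and it produces the non-quadratic solution via the explicit ansatz $u=r^{4/3}w(x_1)$ reduced to an ODE, whereas you invoke Monge--Amp\`ere existence theory or a two-dimensional reduction --- all of which are legitimate.
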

\begin{proof}
	Let $\underline\omega$ be a torsion free hypersymplectic structure with $S^1$ symmetry determined by the data $(u, S)$ of Proposition \ref{Donaldson}. The $3$-form $\phi=\dd t^{123}- \dd t^1 \wedge \omega_1 - \dd t^2\wedge \omega_2 - \dd t^3\wedge \omega_3$ on $U\times S^1\times \mathbb{T}^3$ is a torsion free $G_2$-structure, and therefore the associated Riemannian metric $g_\phi$ is Ricci-flat. According to the formula \cite[Lemma 3.5]{FY}, 
	\[
	R
	= \frac{1}{4}|\dd Q|_Q^2
	= \frac{1}{4} Q^{ij}\nabla_\alpha Q_{jk}Q^{kl}\nabla_\beta Q_{li} g^{\alpha\beta}
	= \frac{1}{4} u_{ij}U^{jk}_{\;\;\; ,\alpha} u_{kl} U^{li}_{\;\;\;,\beta}\frac{1}{S}U^{\alpha\beta}
	= \frac{1}{4S} U^{ai}U^{bj}U^{ck} u_{abc} u_{ijk}
	\] 
	Any solution $u$ to $\det \left( \frac{\partial^2 u}{\partial x^i \partial x^j} \right)=1$ which is not a quadratic polynomial gives a metric where
	$|\dd Q|_Q^2 \neq 0$ and thus $g_{\underline\omega}$ is not scalar-flat. 
	
	As an almost explicit example, we could take $u(x_1, x_2, x_3)=r^\frac{4}{3} w(x_1)$ where $r=\sqrt{x_2^2+x_3^2}$.  Then 
	\[
	\det \left(u_{ij} \right) 
	=\frac{16}{27}(w^2 w'' - 4ww'^2)
	\]
	By ODE theory, an even function $w$ defined on some interval $(-\delta, \delta)$ such that $\det (u_{ij})=1$ exists. This non-quadratic function is strictly convex on the domain $(-\delta, \delta)\times \{x_2=x_3=0\}$.
\end{proof}

The condition of being {torsion-free} for a hypersymplectic structure $\underline\omega$ is equivalent to the following equations (according to \cite[Lemma 3.5]{FY}):

\begin{align}
\hat\Delta Q 
&
=0\label{Q-harmonic}\\
\text{Ric }g_{\underline\omega} 
&
= \frac{1}{4}\left\langle \dd Q\otimes \dd Q \right\rangle_Q
\label{Ric-equation}
\end{align}
Therefore a torsion free hypersymplectic structure leads to a harmonic map from a 4-manifold with non-negative Ricci curvature to a 5 dimensional symmetric space of non-positive curvature. In the presence of geometrical completeness of the domain and target manifolds, one should expect a Liouville type theorem to hold, implying the map must be constant. The following theorem shows this is indeed true.

\begin{theorem}\label{torsion free-hyperkahler}
	Let $(M, \omega_1, \omega_2, \omega_3)$ be a hypersymplectic structure. Suppose $\TT=0$ and the associated metric $g_{\underline\omega}$ is complete, then $\underline\omega$ is hyperk\"ahler. 
\end{theorem}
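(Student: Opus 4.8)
The plan is to reduce the theorem to a Liouville-type vanishing statement for the map $Q$. It suffices to show that $Q\colon M\to\mathcal{P}$ is constant: a constant $Q$ can be normalised to $\tfrac12\Id$ by a fixed linear change of basis of the $\omega_i$, and the resulting triple is closed, pointwise-orthonormal and self-dual for $g_{\underline\omega}$ (see the discussion of self-duality and of $Q$ above), i.e.\ a hyperk\"ahler structure (this is also the content of the last sentence of Proposition~\ref{Donaldson}). The torsion-free hypothesis $\TT=0$ hands us exactly the two equations \eqref{Q-harmonic} and \eqref{Ric-equation}: the first says $Q$ is a harmonic map into the non-positively curved symmetric space $\mathcal{P}$, and the second says $\Ric(g_{\underline\omega})=\tfrac14\langle\diff Q\otimes\diff Q\rangle_Q\geqslant0$. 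So the problem is a harmonic map from a complete $4$-manifold with $\Ric\geqslant0$ into an NPC target, and one expects the map to be constant; the aim is to show the energy density $e:=|\diff Q|_Q^2$ vanishes identically.

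First I would feed $\hat\Delta Q=0$ into the Bochner formula \eqref{bochner}. The middle term drops out and the curvature term $-K_{\mathcal P}$ is non-negative (as $\mathcal P$ is non-positively curved), so
\[
\tfrac12\Delta e \;\geqslant\; |\hat{\nabla}\diff Q|_Q^2 + \Ric^{\alpha\beta}\langle\nabla_\alpha Q,\nabla_\beta Q\rangle_Q \;\geqslant\; \Ric^{\alpha\beta}\langle\nabla_\alpha Q,\nabla_\beta Q\rangle_Q .
\]
The crucial point is that here $\Ric$ is not merely non-negative: by \eqref{Ric-equation} it equals $\tfrac14$ times the energy tensor $A_{\alpha\beta}:=\langle\nabla_\alpha Q,\nabla_\beta Q\rangle_Q$. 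Hence the Ricci term equals $\tfrac14|A|^2$, and since $\tr A=e$, Cauchy--Schwarz in dimension $4$ gives $|A|^2\geqslant\tfrac14 e^2$. This produces the genuinely \emph{quadratic} differential inequality $\Delta e\geqslant\tfrac18 e^2$ with $e\geqslant0$, which is the gain over plain subharmonicity that makes the non-compact argument work.

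To finish I would run a Keller--Osserman barrier argument. Fix $p\in M$ and $R>0$. Using only $\Ric\geqslant0$, the Laplacian comparison $\Delta r\leqslant 3/r$ (with $r=d(p,\cdot)$) shows that the Euclidean radial Keller--Osserman barrier $w=h(r)$ --- the increasing solution of $h''+\tfrac{3}{r}h'=\tfrac18 h^2$ on $[0,R)$ that blows up at $r=R$ --- is a supersolution on the geodesic ball, $\Delta w\leqslant\tfrac18 w^2$, since $h'\geqslant0$. Then $\Delta(e-w)\geqslant\tfrac18(e+w)(e-w)$, so an interior positive maximum of $e-w$ is impossible while $e-w\to-\infty$ at $\partial B(p,R)$ (handling the cut locus in the usual way); hence $e\leqslant w$ on the ball. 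By the scaling $h\mapsto\lambda^2 h(\lambda\,\cdot)$ of the model equation one has $w(p)=h(0)\leqslant C/R^2$ for a dimensional constant $C$, so $e(p)\leqslant C/R^2$. Completeness of $g_{\underline\omega}$ lets $R\to\infty$, forcing $e(p)=0$; as $p$ was arbitrary, $e\equiv0$, so $\diff Q=0$, $Q$ is constant, and $\underline\omega$ is hyperk\"ahler.

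The main obstacle is precisely this passage from the pointwise Bochner inequality to a global vanishing statement without compactness or an a priori energy bound: a non-negative subharmonic function on a complete manifold need not be constant. What rescues the argument is the quadratic term $\tfrac18 e^2$ coming from the coincidence $\Ric=\tfrac14 A$; it places us in the Keller--Osserman regime, where the only geometric input is a lower Ricci bound (to construct the comparison barrier), which we have for free. A superficially slicker route via the Omori--Yau maximum principle would a priori require $e$ to be bounded above, which is not known in advance, so the interior a priori estimate is the cleaner path.
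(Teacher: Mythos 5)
Your proposal is correct and takes essentially the same route as the paper: the paper likewise feeds \eqref{Q-harmonic} and \eqref{Ric-equation} into the Bochner formula \eqref{bochner} to produce a Keller--Osserman-type inequality for the energy density, and then proves a Liouville statement (Lemma~\ref{Calabi}) by exactly your comparison argument --- a radial barrier solving $v''+\tfrac{3}{x}v'=\tfrac14 v^3$ together with the Laplacian comparison $\Delta r\leqslant 3/r$ under $\Ric\geqslant 0$, letting the radius tend to infinity by completeness. The only cosmetic difference is that the paper retains the Hessian term via Kato's inequality and regularises with $h_\epsilon=(f+\epsilon)^{1/2}$ to get $\Delta h\gtrsim h^3$, whereas you discard that term and work directly with $\Delta e\geqslant\tfrac18 e^2$; both inequalities lie in the Keller--Osserman regime and the barrier argument is identical in structure.
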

We give the proof since it is elementary and the results in the literature are not directly applicable.  Note first that by \eqref{Ric-equation}, the Ricci curvature of $g_{\underline{\omega}}$ is non-negative. Next we recall the Bochner formula \eqref{bochner}
\begin{equation*}
\frac{1}{2} \Delta |\diff Q|^2_Q
=
|\hat{\nabla}\diff Q|^2_Q
+
g^{\alpha \beta}\left\langle 
\hat{\nabla}_\alpha \hat{\Delta}Q, \nabla_\beta Q
\right\rangle_Q
+
R^{\alpha \beta} \left\langle 
\nabla_\alpha Q,\nabla_\beta Q 
\right\rangle_Q
-
K_{\mathcal{P}}
\end{equation*}
where $K_{\mathcal{P}}$ is a term involving the sectional curvature of $\mathcal{P}$:
\[
K_{\mathcal{P}} = \Rm_{\mathcal{P}}(
\nabla_\alpha Q, \nabla_\beta Q, \nabla^\beta Q, \nabla^\alpha Q)
\]
and is non positive. From this, \eqref{Q-harmonic}, \eqref{Ric-equation} and the fact that $|\nabla |\diff Q|_Q| \leqslant |\hat{\nabla}\diff Q|_Q$ (Kato's inequality) we get
\begin{equation}\label{Bochner-inequality}
\begin{split}
\frac{1}{2} \left|\dd Q\right|_Q^2 \Delta |\dd Q|_Q^2 
&=
| \hat\nabla \dd Q |_Q^2 |\dd Q|_Q^2 
+  \frac{1}{4} \left|\left\langle \dd Q\otimes \dd Q \right\rangle_Q \right|^2 |\dd Q|_Q^2
- K_\mathcal{P} |\dd Q|_Q^2\\
& \geqslant
\frac{1}{4} \left| \nabla \left| \dd Q\right|^2_Q \right|^2	
+ \frac{1}{4} \left| \frac{1}{4} |\dd Q|_Q^2 g_{\underline\omega} \right|^2 \\
& =
\frac{1}{4} \left| \nabla |\dd Q|^2_Q\right|^2 + \frac{1}{16} |\dd Q|_Q^6
\end{split}	
\end{equation}

We now need the following lemma:

\begin{lemma}\label{Calabi}
	Let $(M, g)$ be a complete Riemannian $4$-manifold with nonnegative Ricci curvature. The only nonnegative function $f$ solving the following inequality is $f \equiv 0$.
\begin{equation}\label{calabi-ineq}	
f\Delta f
	\geqslant
		\frac{1}{2} |\nabla f|^2 + \frac{1}{2} f^3
\end{equation}
\end{lemma}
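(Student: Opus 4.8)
The plan is to reduce \eqref{calabi-ineq} to a gradient-free inequality for $v=\sqrt f$ and then run a Keller--Osserman-type comparison against an explicit radial supersolution on geodesic balls, using completeness and $\Ric\geqslant 0$ to let the radius tend to infinity.

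First I would clean up the inequality. Since $f\in C^2$ is nonnegative, at any point where $f>0$ I divide \eqref{calabi-ineq} by $f$ to get $\Delta f\geqslant \frac{1}{2f}|\nabla f|^2+\frac12 f^2$, and I set $v=f^{1/2}$. Using $\Delta v=\frac12 f^{-1/2}\Delta f-\frac14 f^{-3/2}|\nabla f|^2$ the gradient terms cancel and one is left with
\[
\Delta v\ \geqslant\ \tfrac14\,v^3\qquad\text{wherever }v>0.
\]
The function $v$ is continuous and nonnegative on $M$, and $C^2$ wherever it is positive; at its zeros it attains a global minimum, so those points will play no role in the comparison below.

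Next I would construct the supersolution. Fix $x_0\in M$ and $R>0$, write $r=d(x_0,\cdot)$, and on the geodesic ball $B_R=B_R(x_0)$ set
\[
w=\frac{A}{R^2-r^2},\qquad A=4\sqrt2\,R .
\]
This $w$ is radial, increasing, smooth at $x_0$ (because $w'(0)=0$), and blows up as $r\to R$. Since $\Ric\geqslant 0$ in dimension four, the Laplacian comparison theorem gives $\Delta r\leqslant 3/r$, and as $w'\geqslant 0$ this yields $\Delta w\leqslant w''+\frac3r w'$. A short computation shows $w''+\frac3r w'=8AR^2\,(R^2-r^2)^{-3}$, while $\frac14 w^3=\frac14 A^3(R^2-r^2)^{-3}$, so $\Delta w\leqslant\frac14 w^3$ precisely when $A^2\geqslant 32R^2$, which holds (with equality) for our choice of $A$. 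Thus $w$ is a supersolution of $\Delta u=\frac14 u^3$ on $B_R$. Now I compare: closed balls are compact by completeness, so $v$ is bounded on $\overline{B_R}$ while $w\to+\infty$ at $\partial B_R$; hence if $\sup_{B_R}(v-w)>0$ it is attained at an interior point $p$ with $v(p)>w(p)>0$. There $v$ is smooth and $\Delta(v-w)(p)\leqslant 0$, yet $\Delta(v-w)(p)\geqslant\frac14\bigl(v(p)^3-w(p)^3\bigr)>0$, a contradiction. Therefore $v\leqslant w$ on $B_R$, and in particular $v(x_0)\leqslant w(x_0)=A/R^2=4\sqrt2/R$. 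Letting $R\to\infty$ forces $v(x_0)=0$; as $x_0$ is arbitrary, $v\equiv 0$ and so $f\equiv 0$.

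The main technical obstacle is the cut locus of $x_0$, where $r$ (hence $w$) is not smooth and the Laplacian comparison only holds in the barrier sense; this is exactly where Calabi's trick enters, and it is what gives the lemma its name. At the maximum point $p$ one replaces $r$ by the smooth upper barrier $r_\epsilon=\epsilon+d(\cdot,x_\epsilon)$, with $x_\epsilon$ a point a distance $\epsilon$ along a minimizing geodesic from $x_0$ to $p$, runs the maximum-principle argument with the corresponding $w_\epsilon$, and lets $\epsilon\to0$. Everything else is routine: the substitution $v=\sqrt f$ is an elementary calculation, and verifying the supersolution inequality amounts only to computing $w''+\frac3r w'$ and comparing constants.
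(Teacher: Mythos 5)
Your proof is correct and follows essentially the same route as the paper's: the same radial supersolution (your $w=4\sqrt{2}\,R/(R^2-r^2)$ is exactly the paper's comparison function $v_a$ with $a=4\sqrt{2}/R$), the same Laplacian comparison $\Delta r\leqslant 3/r$ under $\Ric\geqslant 0$, and the same interior maximum-principle comparison on balls whose radius is then sent to infinity. The only differences are presentational and in your favour: you avoid the paper's $\sqrt{f+\epsilon}$ regularization (and its preliminary claim $\Delta f\geqslant 0$) by observing that a positive maximum of $v-w$ can only occur where $v>w>0$, where $v=\sqrt{f}$ is already $C^2$, and you explicitly handle the cut locus via Calabi's barrier trick, a point the paper passes over in silence.
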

Results of this kind date back to Haviland, Osserman and Calabi \cite{Ca}.  
\begin{proof}[Proof of Lemma \ref{Calabi}]
	We first claim that $\Delta f \geqslant  0$ on $M$. To see this suppose that at some point $p$,  $\Delta f < 0$. The inequality~\eqref{calabi-ineq} shows that $f(p)=0$. But this means that $p$ is a minimum of $f$ and so at $p$ we must have $\Delta f \geq0$, a contradiction.
	
	Next, for any $\epsilon>0$, define $h_\epsilon=(f+\epsilon)^\frac{1}{2}$, then $h_\epsilon$ is a strictly positive smooth function, and 
	\begin{equation}\label{barrier}
	\begin{split}
	\Delta h_\epsilon
	& =\frac{1}{2}(f+\epsilon)^{-\frac{1}{2}} \Delta f 
	- \frac{1}{4} (f+\epsilon)^{-\frac{3}{2}} |\nabla f|^2 \\
	& \geqslant\frac{1}{2} (f+\epsilon)^{-\frac{3}{2}} \epsilon \Delta f
	+ \frac{1}{4} f^3 (f+\epsilon)^{-\frac{3}{2}}\\
	& \geqslant\frac{1}{4} f^3 (f+\epsilon)^{-\frac{3}{2}}\\
	& = \frac{1}{4}( h_\epsilon^3 - 3 h_\epsilon\epsilon + \frac{3}{h_\epsilon}\epsilon^2 - \frac{\epsilon^3}{h_\epsilon^3})\\
	& \geqslant\frac{1}{4}h_\epsilon^3 - \frac{3}{4}h_\epsilon\epsilon - \frac{1}{4} \epsilon^\frac{3}{2}
	\end{split}
	\end{equation}
For any $a>0$, one checks that $v_a(x)=\frac{32a}{32-a^2 x^2}, x\in [0, \frac{4\sqrt{2}}{a})$ is the unique one-variable real-valued function satisfying 
	\[
	\left\{ \begin{array}{l}
	v''(x)+\frac{3}{x} v'(x) = \frac{1}{4} v(x)^3 \\
	v(0)=a, \; v'(0)=0
	\end{array}   \right.
	\]      
Now let $p_0\in M$ be any given point, $r_{p_0}(\cdot)=d(p_0, \cdot)$ be the distance function,  and define a local comparison function $V_a(\cdot)=v_a\left( r_{p_0}(\cdot)\right)$ on the metric ball $B_{g_{\underline\omega}}(p_0, \frac{4\sqrt{2}}{a})\subset M$. Then on this metric ball we have
	\begin{equation}\label{superlinear}
	\Delta V_a
	=v''_a+v'_a\Delta r_{p_0}\leqslant v_a''+\frac{3}{r_{p_0}}v_a'
	=\frac{1}{4}V_a^3
	\end{equation}
The function $h_\epsilon -V_a$ approaches $-\infty$ near $\partial B_{g_{\underline\omega}}(p_0, \frac{4\sqrt{2}}{a})$, therefore it takes its 
	maximum at some point $q\in B_{g_{\underline\omega}}(p_0, \frac{4\sqrt{2}}{a})$. By \eqref{barrier} and \eqref{superlinear}, at $q$, 
	
	\begin{equation*}
	0
	\geqslant
	\Delta (h_\epsilon - V_a)|_q
	\geqslant\frac{1}{4} \left( h_\epsilon(q) - V_a(q) \right)
	\left( h_\epsilon(q)^2 + h_\epsilon(q) V_a(q)+V_a(q)^2 \right)
	- \frac{3}{4}h_\epsilon(q) \epsilon - \frac{1}{4} \epsilon^\frac{3}{2}
	\end{equation*}
	which implies that
	\begin{equation*}
	\begin{split}
	h_\epsilon(p_0) - V_a(p_0)
	& \leqslant
	h_\epsilon(q) - V_a(q)\\
	& \leqslant
	\frac{3\epsilon}{h_\epsilon(q) + \frac{V_a(q)^2}{h_\epsilon(q)} + V_a(q)} 
	+ \frac{\epsilon^\frac{3}{2}}{h_\epsilon(q)^2 + h_\epsilon(q) V_a(q) + V_a(q)^2}\\
	& \leqslant
	\frac{\epsilon}{V_a(q)} + \frac{\epsilon^\frac{3}{2}}{V_a(q)^2}\\
	& \leqslant
	\frac{\epsilon}{a} + \frac{\epsilon^\frac{3}{2}}{a^2}
	\end{split}
	\end{equation*}
	
	Fixing $a>0$ and let $\epsilon\to 0$, we get $\sqrt{f(p_0)} \leqslant a$ and then letting $a\to 0$ we conclude that $f(p_0)=0$. It follows $f\equiv 0$ since $p_0$ is arbitrary. 
\end{proof}

\begin{proof}[Proof of Theorem \ref{torsion free-hyperkahler}]
	Let $f=\frac{1}{4}|\dd Q|_Q^2$ be the scalar curvature of the Riemannian metric associated to a torsion free hypersymplectic structure. It follows from \eqref{Ric-equation} and \eqref{Bochner-inequality} that $g_{\underline{\omega}}$ and $f$ satisfy the assumption of Lemma \ref{Calabi} and therefore $f$ vanishes. In other words, $Q$ is constant and $g_{\underline\omega}$ is hyperk\"ahler. 
\end{proof}

\begin{remark}
	Among the ``local conditions'' of being \emph{1) hyperk\"ahler}, \emph{
		2) torsion free} and \emph{3) Ricci flat} for a hypersymplectic structure, we know the implications \emph{1)$\implies$ 2)}, \emph{1)$\implies$ 3)} hold, and the implications \emph{2)$\implies$ 1)} and \emph{2)$\implies$ 3)} do not hold. The other implications are not clear, but we do not expect them to hold. 
\end{remark}

We note in passing that the comparison argument (applied to the metric ball $B_{g_{\underline\omega}}(p_0, A)$ instead of metric balls of larger and larger radius) in Theorem \ref{Calabi} shows the following local estimate for torsion-free hypersymplectic structures which might be of future use.

\begin{proposition}
	Let $\underline\omega=(\omega_1, \omega_2, \omega_3)$ be a torsion free hypersymplectic structure on an open set $U$. If $B_{g_{\underline{\omega}}}(p_0, A)\subset U$ with $\partial B_{g_{\underline\omega}}(p_0, A)\neq \emptyset$, then we have
	$$|\dd Q|_Q^2(p_0) \leqslant \frac{128}{A^2}$$
\end{proposition}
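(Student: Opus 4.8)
The plan is to re-run the comparison argument from the proof of Lemma~\ref{Calabi} verbatim, but with the radius of the comparison ball fixed at $A$ rather than sent to infinity. Set $f = \frac{1}{4}|\dd Q|_Q^2$, the scalar curvature of $g_{\underline\omega}$. As in the proof of Theorem~\ref{torsion free-hyperkahler}, the torsion-free equations \eqref{Q-harmonic} and \eqref{Ric-equation} together with the Bochner inequality \eqref{Bochner-inequality} show that $g_{\underline\omega}$ has $\Ric \geqslant 0$ and that $f$ satisfies $f\Delta f \geqslant \frac{1}{2}|\nabla f|^2 + \frac{1}{2}f^3$ on $U$. For each $\epsilon > 0$ the function $h_\epsilon = (f+\epsilon)^{1/2}$ then obeys the barrier inequality \eqref{barrier}.

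First I would calibrate the one-dimensional model $v_a(x) = \frac{32a}{32 - a^2 x^2}$ so that its blow-up radius $\frac{4\sqrt{2}}{a}$ equals exactly $A$; this forces the choice $a = \frac{4\sqrt{2}}{A}$, and gives $V_a(p_0) = v_a(0) = a = \frac{4\sqrt{2}}{A}$. Since $B_{g_{\underline\omega}}(p_0,A) \subset U$ and $\Ric \geqslant 0$, the Laplacian comparison $\Delta r_{p_0} \leqslant \frac{3}{r_{p_0}}$ holds (in the barrier sense) on this ball, so $V_a = v_a(r_{p_0})$ satisfies $\Delta V_a \leqslant \frac{1}{4} V_a^3$ there, exactly as in \eqref{superlinear}.

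Next I would apply the maximum principle to $h_\epsilon - V_a$ on $B_{g_{\underline\omega}}(p_0, A)$. Because $v_a(x) \to +\infty$ as $x \to A^-$ while $h_\epsilon$ stays finite, the difference tends to $-\infty$ near the metric sphere of radius $A$ and hence attains its maximum at an interior point $q$. Combining \eqref{barrier} and \eqref{superlinear} at $q$ yields, just as in Lemma~\ref{Calabi}, the estimate $h_\epsilon(p_0) - V_a(p_0) \leqslant \frac{\epsilon}{a} + \frac{\epsilon^{3/2}}{a^2}$. Letting $\epsilon \to 0$ gives $\sqrt{f(p_0)} \leqslant V_a(p_0) = \frac{4\sqrt{2}}{A}$, hence $f(p_0) \leqslant \frac{32}{A^2}$, which is exactly $|\dd Q|_Q^2(p_0) \leqslant \frac{128}{A^2}$.

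The one genuinely new point compared with Lemma~\ref{Calabi}---and the step I expect to require the most care---is justifying that the maximum of $h_\epsilon - V_a$ is attained at an interior point, since we no longer have global completeness to fall back on. This is where the two hypotheses enter: $B_{g_{\underline\omega}}(p_0,A) \subset U$ guarantees that minimizing geodesics from $p_0$ of length up to $A$ remain in $U$, so that the Laplacian comparison and the barrier are valid on the whole ball, while $\partial B_{g_{\underline\omega}}(p_0,A) \neq \emptyset$ guarantees that $r_{p_0}$ genuinely ranges up to $A$, so that $V_a$ does blow up at the edge of the ball and the closed sub-balls $\{r_{p_0} \leqslant A - \delta\}$, on which $h_\epsilon$ attains its maximum, are compact. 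With these points secured the remainder is a direct transcription of the earlier computation.
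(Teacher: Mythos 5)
Your proposal is correct and is precisely the argument the paper intends: it states this proposition only as a remark, observing that the comparison argument from Lemma~\ref{Calabi} run on the fixed ball $B_{g_{\underline\omega}}(p_0,A)$ (rather than on balls of radius tending to infinity) gives the estimate, and your calibration $a = 4\sqrt{2}/A$, yielding $\sqrt{f(p_0)} \leqslant a$ and hence $|\dd Q|_Q^2(p_0) = 4f(p_0) \leqslant 128/A^2$, is exactly the computation required. Your closing discussion of why the maximum of $h_\epsilon - V_a$ is attained at an interior point is also the right place to focus the extra care that the local version demands.
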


\subsection{Extension under weaker assumptions}

Turning to the hypersymplectic flow now, our first new result is a finite time extension for flows with an integral bound on $\TT$, strengthening Theorem \ref{FY}.

\begin{theorem}\label{extendible}
	Let $\{\underline\omega(t)\}$ be a flow of hypersymplectic structure existing on $[0, s)$, where $s< \infty$. Suppose 
	\[\int_{0}^s \sup_{M\times\{t\}} |\TT|^2 \dd t <\infty,
	\] then the flow extends across $t=s$. 
\end{theorem}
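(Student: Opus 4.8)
The plan is to argue by contradiction. If the flow does not extend past $s$ then, by the Lotay--Wei extension criterion (Theorem~\ref{LW-extension}), the quantity $\Lambda(\phi(t))$ blows up as $t\nearrow s$. I would then run the parabolic rescaling machinery of Theorem~\ref{FY}, producing a non-trivial singularity model, and show this model is a complete hyperk\"ahler $4$-manifold---a non-trivial ALE gravitational instanton---which is ruled out exactly as in Step~2 of the proof of Theorem~\ref{FY} (Kronheimer's classification together with the holomorphic sphere argument). The novelty, compared with Theorem~\ref{FY}, is that the uniform control on the scalar curvature is replaced by the weaker integral hypothesis; the whole point is to show that this integral bound still suffices to produce, and to identify, the singularity model.

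The first step is to extract uniform bounds on $Q$ from the integral hypothesis. Applying the maximum principle to~\eqref{heat-tr} and integrating, Gr\"onwall's inequality gives
\[
\sup_{M}\tr Q(\cdot,t)\ \leqslant\ \Big(\sup_{M}\tr Q(\cdot,0)\Big)\exp\Big(\tfrac{5}{3}\!\int_0^{s}\sup_{M\times\{\tau\}}|\TT|^2\,\diff\tau\Big)\ =:\ C_0<\infty .
\]
A matching lower bound on the smallest eigenvalue of $Q$ follows in the same way from~\eqref{Q-evolution} (the reaction term $\langle\underline\tau,\underline\tau\rangle$ is non-negative, while $-\tfrac13|\TT|^2 Q$ contributes only to the Gr\"onwall exponent), so that $Q$ and $Q^{-1}$ are both uniformly bounded on $[0,s)$. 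Feeding $\tr Q\leqslant C_0$ into~\eqref{heat-dQ} and again combining the maximum principle with Gr\"onwall---the good terms $-|\hat\nabla\diff Q|_Q^2-\tfrac1{16}|\diff Q|_Q^4$ only help---yields a uniform bound $|\diff Q|_Q^2\leqslant C_1$. The key mechanism is that $\int_0^s\sup_M|\TT|^2\,\diff t$ enters precisely as the exponent in these Gr\"onwall estimates, which is exactly why an integral, rather than a pointwise, bound on the torsion is enough at this stage.

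With $Q$, $Q^{-1}$ and $\diff Q$ controlled, I would turn to the rescaling~\eqref{hypersymplectic-parabolic-rescale} centred at points and times of maximal $\Lambda$. The decisive observation is that the functional $\int\sup_M|\TT|^2\,\diff t$ is scale-invariant: under parabolic rescaling $|\TT|^2$ scales like the inverse of time, so the rescaled integral over a fixed window $[-A,0]$ equals the original integral over a window $[t_i-A\Lambda_i^{-1},t_i]$ that shrinks to the endpoint $s$ as $i\to\infty$, and hence tends to $0$. Consequently any Cheeger--Gromov limit of the rescaled flows is torsion-free. Being complete, Theorem~\ref{torsion free-hyperkahler} then forces it to be hyperk\"ahler; together with a scale-invariant finite-energy bound (obtained as in Simon's argument, reused in Theorem~\ref{hypersymplectic-bubble}) and Euclidean volume growth, it is a non-trivial ALE gravitational instanton, giving the desired contradiction.

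The main obstacle is producing the limit at all, i.e.\ a non-collapsing estimate. In Theorem~\ref{FY} this comes from Chen's theorem, which requires $\|\partial_t g+2\Ric\|_{C^0}$ to be uniformly bounded; by~\eqref{metric-evolution} and the bounds of the first step this quantity is controlled by $1+\sup_M|\TT|^2$, which we only bound in the integral sense. The hard part is therefore to upgrade non-collapsing to hold under an integral torsion bound---either by an integral refinement of the Perelman/Chen argument, or by exploiting the scale-invariant $L^2$-bound on curvature special to dimension four to preclude collapse directly. Once the rescaled limit is known to exist, the remaining steps---hyperk\"ahlerity via Theorem~\ref{torsion free-hyperkahler} and the exclusion of ALE instantons---go through verbatim.
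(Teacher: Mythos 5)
Your first paragraph of estimates is exactly the first half of the paper's proof: applying the maximum principle to~\eqref{heat-tr} and Gr\"onwall gives $\sup_M\tr Q\leqslant C_0$ with the integral $\int_0^s\sup_M|\TT|^2\,\dd t$ appearing as the exponent, and feeding this into~\eqref{heat-dQ} gives $\sup_M|\diff Q|^2_Q\leqslant C_1$ on $[0,s)$. Up to there you are correct.

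The problem is everything after that. You then launch a full parabolic blow-up analysis and, by your own admission, cannot close it: you flag the non-collapsing estimate under a merely integral torsion bound as ``the hard part'' and offer only two speculative routes around it. That is a genuine gap --- as written, no singularity model is ever produced, so the contradiction is never reached. But the deeper issue is that the entire blow-up apparatus is unnecessary, because you have already won by the end of your first paragraph and do not notice it. The torsion is controlled pointwise by $\diff Q$: Lemma~3.13 of~\cite{FY} gives $|\TT|^2\leqslant\frac{3}{2}|\diff Q|^2_Q$ (this is essentially the inequality $\sum_{i,j}\lambda_j^{-1}|\diff Q_{ij}|^2\geqslant\frac{1}{3}\sum_i|\tau_i|^2$ used elsewhere in the paper, combined with $|\TT|^2=\sum_i\lambda_i^{-1}|\tau_i|^2$). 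Hence your bound $|\diff Q|^2_Q\leqslant C_1$ immediately upgrades the integral hypothesis to a uniform pointwise bound on $|\TT|^2$ on $[0,s)$. Since $\RR(\phi)=-|\TT|^2$ by~\eqref{G2-scalar-curvature}, this is precisely the bounded-scalar-curvature hypothesis of Theorem~\ref{FY}, which then yields the extension past $t=s$ with no further work. The paper's proof is exactly this three-line reduction; you should cite the already-established Theorem~\ref{FY} as a black box rather than re-running (and getting stuck inside) its proof under weaker hypotheses.
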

\begin{proof}
Let $f(t)=\sup_{M\times\{t\}} \tr Q$, then the inequality \eqref{heat-dQ} implies 
\begin{align*}
\frac{\dd}{\dd t} \log f
\leqslant 
\frac{5}{3} \sup_{M\times\{t\} } | \TT|^2 
\end{align*}
Integrating both sides on $[0, s)$ shows that $f$ is bounded on $M\times [0,s)$. Similarly, let $h(t)=\sup_{M\times\{t\}} |\dd Q|_Q^2$, then the inequality \eqref{heat-dQ} leads to 
\begin{align*}
\frac{\dd}{\dd t} \log h
\leqslant 
Cf(t)^{19} \sup_{M\times\{t\} }|\TT|^2
\end{align*}
which implies $h$ is bounded on $[0,s)$. The bound $|\TT|^2 \leqslant \frac{3}{2} |\dd Q|_Q^2 $ (Lemma 3.13 of~\cite{FY}) implies $|\TT|^2$ is uniformly bounded on $[0, s)$. The stated result is now a consequence of Theorem \ref{FY}. 
\end{proof}

Note that the bound $|\TT|^2 = O((s_m - t)^{-1})$ does not meet the hypotheses of Theorem~\ref{extendible}. However, there is the following interesting \emph{gap phenomenon} about this growth rate.

\begin{proposition}\label{gap}
Let $\underline{\omega}$ be a hypersymplectic flow on a compact 4-manifold defined for $t \in [0,s_m)$ where $s_m<\infty $ is maximal. Suppose moreover that $\tr Q <K$ uniformly along the flow. Then
\[
\limsup_{t\to s_m} \left[(s_m-t) \sup_{X \times \{t\}} |\TT|^2\right] \geqslant \epsilon_0
\]
for some constant $\epsilon_0>0$, depending only on $K$.
\end{proposition}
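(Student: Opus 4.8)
The plan is to argue by contradiction, using the extension criterion of Theorem~\ref{extendible}. Suppose the conclusion fails, so that $\limsup_{t\to s_m}\big[(s_m-t)\sup_{M\times\{t\}}|\TT|^2\big] < \epsilon_0$ for a constant $\epsilon_0$ to be fixed below. By the definition of $\limsup$ there is then a time $t_0\in[0,s_m)$ with
\[
\sup_{M\times\{t\}}|\TT|^2 \leqslant \frac{\epsilon_0}{s_m-t}\qquad\text{for all }t\in[t_0,s_m).
\]
The naive estimate this provides is not directly useful, since $\int_{t_0}^{s_m}(s_m-t)^{-1}\,\dd t=\infty$, so Theorem~\ref{extendible} does not apply on the nose. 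The key idea is that this borderline-divergent bound can nonetheless be \emph{bootstrapped} through the evolution inequality~\eqref{heat-dQ} into a strictly integrable one.

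First I would feed the hypothesis into the differential inequality for $h(t):=\sup_{M\times\{t\}}|\dd Q|_Q^2$. Exactly as in the proof of Theorem~\ref{extendible}, applying the maximum principle to~\eqref{heat-dQ} (discarding the favourable terms $-|\hat\nabla\dd Q|_Q^2$ and $-\tfrac1{16}|\dd Q|_Q^4$) and using the standing assumption $\tr Q<K$ to control the factor $(\tr Q)^{19}$, one obtains
\[
\frac{\dd}{\dd t}\log h \leqslant C K^{19}\sup_{M\times\{t\}}|\TT|^2 \leqslant \frac{CK^{19}\epsilon_0}{s_m-t},
\]
where $C$ is the universal constant of~\eqref{heat-dQ}. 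Integrating from $t_0$ to $t$ gives $h(t)\leqslant C_0\,(s_m-t)^{-\beta}$ with $\beta:=CK^{19}\epsilon_0$ and $C_0:=h(t_0)(s_m-t_0)^{\beta}$. Now I would simply fix $\epsilon_0:=\tfrac{1}{2CK^{19}}$, which depends only on $K$, so that $\beta=\tfrac12<1$.

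With the exponent strictly below $1$ the conclusion is immediate: the bound $|\TT|^2\leqslant\tfrac32|\dd Q|_Q^2$ of Lemma~3.13 in~\cite{FY} yields $\sup_{M\times\{t\}}|\TT|^2\leqslant\tfrac32 C_0(s_m-t)^{-1/2}$, whence $\int_{t_0}^{s_m}\sup_{M\times\{t\}}|\TT|^2\,\dd t<\infty$. Theorem~\ref{extendible} then extends the flow across $t=s_m$, contradicting the maximality of $s_m$.

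Two technical points need care, though neither is serious. The differentiation of the spatial suprema is justified, as in Theorem~\ref{extendible}, by Hamilton's trick (these suprema are locally Lipschitz and the inequalities hold for the upper Dini derivative). And to make sense of $\log h$ one must know $h>0$ on $[t_0,s_m)$: if $h(t_1)=0$ for some $t_1$ then $\dd Q\equiv0$ there, hence $\TT=0$, so $\underline\omega(t_1)$ is torsion free and therefore hyperk\"ahler by Theorem~\ref{torsion free-hyperkahler} (the compact $M$ is complete); being a fixed point, the flow would then be stationary and extend for all time, again contradicting $s_m<\infty$. The genuinely new point---and the only real obstacle---is the \textbf{bootstrapping observation} itself: the marginal rate $(s_m-t)^{-1}$ that defeats Theorem~\ref{extendible} directly is, after one pass through~\eqref{heat-dQ}, converted into a power $(s_m-t)^{-\beta}$ whose exponent is linear in the assumed $\epsilon_0$, so that a sufficiently small (but $K$-dependent) $\epsilon_0$ forces $\beta<1$ and restores integrability.
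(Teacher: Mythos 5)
Your proposal is correct and follows essentially the same route as the paper: argue by contradiction, feed the assumed bound $(s_m-t)\sup|\TT|^2<\epsilon_0$ into the heat inequality~\eqref{heat-dQ} with $(\tr Q)^{19}$ controlled by $K^{19}$, fix $\epsilon_0=\tfrac{1}{2CK^{19}}$, deduce $|\dd Q|_Q^2\lesssim (s_m-t)^{-1/2}$, and conclude via $|\TT|^2\leqslant\tfrac32|\dd Q|_Q^2$ and Theorem~\ref{extendible}. The only cosmetic difference is that the paper runs the ODE comparison on the weighted quantity $h(t)=\sup_{X\times\{t\}}(s_m-t)|\dd Q|_Q^2$ rather than on $\log\sup|\dd Q|_Q^2$, arriving at the identical exponent $-1/2$; your extra remarks on Hamilton's trick and on the degenerate case $h=0$ are sound.
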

\begin{proof}
Suppose the claim in this corollary is not correct, for some choice of $\epsilon_0$ which will be specified shortly. Then there exists $s_0<s_m$ such that on $[s_0, s_m)$, 
	\[
	|\TT|^2(s_m - t)<\epsilon_0
	\]
We have the following inequality from \eqref{heat-dQ}:
\begin{align*}
(\partial_t -\Delta)\left( (s_m -t) |\dd Q|_Q^2 \right)
	& =
		- |\dd Q|_Q^2 + (s_m - t) (\partial_t - \Delta) |\dd Q|_Q^2\\
	& \leqslant 
		- | \dd Q|_Q^2 + (s_m - t) \left(- \frac{1}{16}|\dd Q|_Q^4 
		+ C\left(\tr Q \right)^{19}|\TT|^2 |\dd Q|_Q^2 \right)\\
	& \leqslant
		\left( -1+ CK^{19}|\TT|^2(s_m-t) \right) |\dd Q|_Q^2\\
	& \leqslant
		\left( -1+ CK^{19} \epsilon_0 \right) |\dd Q|_Q^2
\end{align*}
Now taking $\epsilon_0 = \frac{1}{2CK^{19}}$ we have that 
\[
(\partial_t -\Delta)\left( (s_m -t) |\dd Q|_Q^2 \right)\leqslant -\frac{1}{2} |\diff Q|^2_Q
\]
Let $h(t)=\sup_{X\times\{t\}} (s_m - t)|\dd Q|_Q^2$, then the above inequality implies 
\[
h'(t)
	\leqslant
		-\frac{1}{2(s_m-t)} h(t) 
\]
This implies 
\[
|\dd Q|_Q^2 (t)
	\leqslant 
		|\dd Q|_Q^2 (s_0) \frac{(s_m - s_0)^\frac{1}{2}}{(s_m - t)^\frac{1}{2}}
\]
From here, Theorem \ref{extendible} implies the flow extends across $s_m$, contradicting the maximality of $s_m$. 
\end{proof}

\begin{remark}
It is interesting to compare Theorem~\ref{extendible} and Proposition~\ref{gap} with what is known for the Ricci flow, where analogous results have been proved with respect to \emph{Ricci} curvature instead of scalar curvature, by Wang \cite{W1, W2}. 

There are also similarities with results in mean curvature flow. The article \cite{li-wang} shows extension of the mean curvature flow of a surface in $\R^3$ provided the mean curvature remains bounded. Meanwhile, \cite{li-wang2} shows an inequality in this setting which is analogous to Proposition~\ref{gap}, with the mean curvature there again playing the role of the torsion here.  

This suggests that it might be worth comparing closely mean curvature flow and $G_2$-Laplacian flow, with the roles of mean curvature and torsion being analogous. On this subject, it is worth pointing out that a concrete link between \emph{spacelike} mean curvature flow and the $G_2$-Laplacian flow is explored in \cite{LL}. In the setting studied there, torsion and mean curvature can be directly identified (remark~1.9 of \cite{LL}.) 
\end{remark}

\subsection{Long-time existence given an initial $C^0$-bound}

The goal of this section is to prove the following result:
\begin{theorem}\label{C0-long-time-existence}
There exists $\epsilon_0>0$ such that if the initial hypersymplectic structure  $\underline{\omega}(0)$ satisfies 
\[
\tr Q(0) < 3 + \epsilon_0
\] 
then the hypersymplectic flow $\underline{\omega}(t)$ starting at $\underline{\omega}(0)$ exists for all time. Moreover, $\tr  Q(t) < 3 +\epsilon_0$ and $|\diff Q|^2_Q(t)<C$ for all $t$. 
\end{theorem}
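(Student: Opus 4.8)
The plan is to run two coupled maximum-principle arguments---one to propagate the $C^0$ bound $\tr Q<3+\epsilon_0$, and one to convert this $C^0$ bound into a bound on $|\diff Q|_Q^2$---and then to feed the resulting torsion bound into Theorem~\ref{extendible} to exclude any finite singular time. The essential difficulty is that the reaction term $C(\tr Q)^{19}|\TT|^2|\diff Q|_Q^2$ in~\eqref{heat-dQ} is \emph{quartic} in $\diff Q$, of exactly the same order as the good term $-\tfrac1{16}|\diff Q|_Q^4$, and carries a large constant (as $(\tr Q)^{19}\geq 3^{19}$ no matter how small $\epsilon_0$ is). Hence no naive maximum principle on $|\diff Q|_Q^2$ alone can succeed; the whole argument hinges on extracting a genuinely dissipative inequality for $\tr Q$ and using it as a weight.

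First I would sharpen~\eqref{heat-tr}. Taking the trace of~\eqref{Q-evolution} directly, and writing $B:=g^{\alpha\beta}\tr\big((\partial_\alpha Q)Q^{-1}(\partial_\beta Q)\big)\geq0$ for the term produced by $\tr(\hat\Delta Q)=\Delta\tr Q-B$, one gets the exact identity
\[
\Big(\frac{\partial}{\partial t}-\Delta\Big)\tr Q=-B+\tr\langle\underline{\tau},\underline{\tau}\rangle-\tfrac13|\TT|^2\tr Q.
\]
The key point is that when $\det Q=1$ and $\tr Q<3+\epsilon_0$ the eigenvalues of $Q$ lie in $[1-c\sqrt{\epsilon_0},\,1+c\sqrt{\epsilon_0}]$, so $B\geq(1-c\sqrt{\epsilon_0})|\diff Q|_Q^2$, while $\tr\langle\underline{\tau},\underline{\tau}\rangle\leq(1+c\sqrt{\epsilon_0})|\TT|^2$ and $\tfrac13|\TT|^2\tr Q\geq|\TT|^2$; together with $|\TT|^2\leq\tfrac32|\diff Q|_Q^2$ this yields, for $\epsilon_0$ small,
\[
\Big(\frac{\partial}{\partial t}-\Delta\Big)\tr Q\;\leq\;-\tfrac12|\diff Q|_Q^2\;\leq\;0.
\]
A standard continuity/maximum-principle argument then shows $\max_M\tr Q$ is non-increasing as long as $\tr Q<3+\epsilon_0$, so this bound---and hence the sharpened inequality itself---persists for as long as the flow exists.

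Second, to bound $|\diff Q|_Q^2$ I would apply the maximum principle to the weighted quantity $F:=|\diff Q|_Q^2\,(A-\tr Q)^{-1}$, where $A:=3+\tfrac1{2a}$ and $a$ is the fixed constant with $-\tfrac1{16}|\diff Q|_Q^4+C(\tr Q)^{19}|\TT|^2|\diff Q|_Q^2\leq a|\diff Q|_Q^4$; one fixes $\epsilon_0<\tfrac1{2a}$ so that $A>\tr Q$ throughout and the weight is bounded above and below. Writing $u=|\diff Q|_Q^2$, $s=\tr Q$, $\phi=(A-s)^{-1}$ and evaluating $(\partial_t-\Delta)F$ at an interior spatial maximum (where $\nabla u=-\tfrac{u\phi'}{\phi}\nabla s$), the quartic terms collect into $u^2(a\phi-\tfrac12\phi')=u^2(A-s)^{-2}\big[a(A-s)-\tfrac12\big]\leq0$, using the dissipation $-\tfrac12u$ from Step~1 together with $A-3=\tfrac1{2a}$; the first-order cross terms collect into $u|\nabla s|^2\big(\tfrac74(\phi')^2/\phi-\phi''\big)=-\tfrac14u(A-s)^{-3}|\nabla s|^2\leq0$, where the improvement by $-\tfrac14(\phi')^2/\phi$ comes from the Hessian term $-|\hat\nabla\diff Q|_Q^2$ in~\eqref{heat-dQ} via Kato's inequality. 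Hence $(\partial_t-\Delta)F\leq0$ at the maximum, $\max_MF$ is non-increasing, and $|\diff Q|_Q^2(t)\leq\tfrac1{2a}\max_MF(0)=:C$ for all $t$ (with $C$ depending on the initial data, which is permitted). I expect this weighted maximum principle to be the main obstacle: one must balance the quartic reaction (which forces $A-3$, and hence $\epsilon_0$, small) against the gradient cross terms (which force the weight to be only mildly convex---this is exactly why $(A-s)^{-1}$ works whereas $e^{\lambda s}$ does not), and the Kato term is needed precisely to close this balance.

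Finally, the uniform bound $|\diff Q|_Q^2<C$ gives $|\TT|^2\leq\tfrac32C$ uniformly, so on any finite interval $[0,s)$ one has $\int_0^s\sup_M|\TT|^2\,\diff t\leq\tfrac32Cs<\infty$; Theorem~\ref{extendible} then excludes a singularity at any finite $s$, the flow is immortal, and the two bounds $\tr Q<3+\epsilon_0$ and $|\diff Q|_Q^2<C$ hold for all time as claimed. The constant $\epsilon_0$ is fixed at the outset as the smaller of the thresholds demanded by Step~1 (smallness of $c\sqrt{\epsilon_0}$) and Step~2 ($\epsilon_0<\tfrac1{2a}$).
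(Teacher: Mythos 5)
Your argument is correct, but the way you close the maximum principle is genuinely different from the paper's. Both proofs rest on the same key mechanism: once $\tr Q$ is pinched near $3$ the eigenvalues of $Q$ are pinched near $1$, and the heat inequality for $\tr Q$ acquires a dissipative term of size $-c\,|\diff Q|^2_Q$, which is the only thing capable of absorbing the quartic reaction $C(\tr Q)^{19}|\TT|^2|\diff Q|^2_Q$ in~\eqref{heat-dQ} (your Step 1 is essentially the paper's refined trace lemma with $\eta=1$ and the $|\TT|^2$ terms folded into $|\diff Q|^2_Q$ via $|\TT|^2\leqslant\tfrac32|\diff Q|^2_Q$). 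The paper exploits this dissipation \emph{additively}: it runs the maximum principle on $\tr Q+|\diff Q|^2_Q$, which requires the extra hypothesis $|\diff Q|^2_Q(0)\leqslant\epsilon_0$ so that the coefficient $C(\tr Q)^{19}|\diff Q|^2_Q$ is beaten by the $-\tfrac{1}{12}|\TT|^2$ from the trace inequality (Proposition~\ref{almost-there}), and then removes that hypothesis by the rescaling $\underline{\omega}_0\mapsto K^2\underline{\omega}_0$, which fixes $Q$ and scales $|\diff Q|^2_Q$ by $K^{-2}$. You exploit it \emph{multiplicatively}, via the weight $(A-\tr Q)^{-1}$ with $A-3=\tfrac{1}{2a}$: the dissipation $-\tfrac12 u$ of $s=\tr Q$ becomes $-\tfrac12 u^2\phi'$, which dominates $a u^2\phi$ precisely because $A-s\leqslant\tfrac{1}{2a}$, and this works for arbitrary initial $|\diff Q|^2_Q$, so no rescaling step is needed. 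The trade-off is that the paper's computation is lighter but needs the scaling observation, while yours is self-contained and gives the explicit bound $|\diff Q|^2_Q(t)\leqslant\tfrac{1}{2a}\sup_M\bigl[|\diff Q|^2_Q(0)\,(A-\tr Q(0))^{-1}\bigr]$. One correction of emphasis only: for $\phi=(A-s)^{-1}$ one has $\phi''=2(\phi')^2/\phi$ identically, so the first-order cross terms at the spatial maximum cancel exactly and the Kato/Hessian contribution is \emph{not} needed to close the balance---it merely supplies the extra margin $-\tfrac14(\phi')^2/\phi$ that you record.
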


Note that, since $\det Q = 1$, we have $\tr Q \geqslant 3$ for any hypersymplectic structure, with equality if and only if $Q$ is the identity matrix. The hypothesis of Theorem~\ref{C0-long-time-existence} says that the initial symplectic forms $\omega_i(0)$ are $C^0$-close to being point-wise orthogonal.

The first step in the proof of Theorem~\ref{C0-long-time-existence} is to show that the bound on $\tr Q$ is preserved along the flow.

\begin{lemma}
Let $\underline{\omega}(t)$ be a hypersymplectic flow on a compact 4-manifold. If $\tr Q(0) < 2^{5/3}$ then $\tr Q(t) < 2^{5/3}$ at all later times.
\end{lemma}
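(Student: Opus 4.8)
The plan is to apply the parabolic maximum principle to the scalar $u=\tr Q$, but the inequality~\eqref{heat-tr} is too weak for this purpose: its reaction term $\frac{5}{3}|\TT|^2\tr Q$ is non-negative, so on its own it cannot trap $\tr Q$ below a fixed constant. Instead I would return to the \emph{exact} evolution obtained by tracing~\eqref{Q-evolution},
\[
\left(\frac{\del}{\del t}-\Delta\right)\tr Q
= -E + \tr\langle\underline{\tau},\underline{\tau}\rangle - \tfrac13|\TT|^2\,\tr Q,
\]
where $E:=\langle \dd Q_{ik}Q^{km},\dd Q_{mi}\rangle\ge 0$ is the non-negative correction relating $\tr\hat{\Delta} Q$ to $\Delta\tr Q$. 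The point is that the harmonic-map term $-E$ is genuinely stabilising and must be retained, rather than absorbed into the right-hand side as in~\eqref{heat-tr}.

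The heart of the argument is to convert the right-hand side into a multiple of $|\dd Q|_Q^2$ whose coefficient has a sign governed by the eigenvalues of $Q$. Writing $\lambda_1\ge\lambda_2\ge\lambda_3>0$ for the eigenvalues of $Q$ and using (i) $\tr\langle\underline{\tau},\underline{\tau}\rangle\le\lambda_1|\TT|^2$ (from $|\TT|^2=\tr(Q^{-1}\langle\underline{\tau},\underline{\tau}\rangle)$ and the positive semi-definiteness of $\langle\underline{\tau},\underline{\tau}\rangle$), (ii) $E\ge\lambda_3|\dd Q|_Q^2$, and (iii) the torsion bound $|\TT|^2\le\frac32|\dd Q|_Q^2$ (Lemma~3.13 of~\cite{FY}), together with the fact that $\lambda_1-\frac13\tr Q\ge0$, I would obtain
\[
\left(\frac{\del}{\del t}-\Delta\right)\tr Q
\;\le\;
\Big(\lambda_1-\tfrac12\lambda_2-\tfrac32\lambda_3\Big)\,|\dd Q|_Q^2 .
\]

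It then remains to prove the purely algebraic statement that, on the constraint set $\{\det Q=1,\ \tr Q\le 2^{5/3}\}$, the coefficient $P(Q):=\lambda_1-\frac12\lambda_2-\frac32\lambda_3$ is $\le 0$. Using $\lambda_1\lambda_2\lambda_3=1$ one checks that the maximum of $P$ over the slice $\{\tr Q=2^{5/3}\}$ is attained at the degenerate configuration $\lambda_2=\lambda_3$, where $\lambda_2=\lambda_3=2^{-1/3}$, $\lambda_1=2^{2/3}$ and $P=0$ exactly, while $\tr Q<2^{5/3}$ forces $P<0$. Granting this, wherever $\tr Q\ge 2^{5/3}$ we have $(\del_t-\Delta)\tr Q\le 0$, so $\tr Q$ is a subsolution there, and the maximum principle on the compact manifold $M$ forbids $\max_M\tr Q$ from ever increasing through the value $2^{5/3}$ from below. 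Hence $\tr Q(0)<2^{5/3}$ forces $\tr Q(t)<2^{5/3}$ for all later $t$.

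I expect the main obstacle to be the bookkeeping of sharp constants in the second paragraph: one must combine the three estimates so that their losses align to produce exactly the coefficient $\lambda_1-\frac12\lambda_2-\frac32\lambda_3$, and then carry out the constrained optimisation in the third paragraph so that the resulting threshold is precisely $2^{5/3}$ rather than a weaker constant (the clean match between the exponent $5/3$ and the coefficient $\frac53$ of~\eqref{heat-tr} is a useful sanity check here). A secondary technical point is to make the maximum-principle step rigorous at the first touching time, for instance via Hamilton's trick for the evolution of $\max_M\tr Q$ or a small perturbation such as $\tr Q-\delta t$; this is routine once the differential inequality with the correct sign is established.
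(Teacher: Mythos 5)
Your reduction to the differential inequality $(\del_t-\Delta)\tr Q\le\bigl(\lambda_1-\tfrac12\lambda_2-\tfrac32\lambda_3\bigr)|\dd Q|_Q^2$ does follow from your estimates (i)--(iii) together with $\lambda_1\ge\tfrac13\tr Q$, but the argument then breaks at the constrained optimisation: the claim that $P=\lambda_1-\tfrac12\lambda_2-\tfrac32\lambda_3\le0$ on $\{\det Q=1,\ \tr Q\le 2^{5/3}\}$, with the maximum attained at $\lambda_2=\lambda_3$, is false. The locus $\lambda_2=\lambda_3$ is an endpoint of the feasible curve, not where $P$ is maximised. For instance $(\lambda_1,\lambda_2,\lambda_3)\approx(1.5577,\,0.9171,\,0.7000)$ satisfies $\lambda_1\lambda_2\lambda_3=1$ and $\lambda_1+\lambda_2+\lambda_3=2^{5/3}$, yet $P\approx 0.049>0$. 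So the coefficient on the right-hand side of your inequality need not be non-positive at a first touching time, and the maximum principle does not close. The underlying problem is that the chain (i)--(iii) is collectively too lossy: bounding $\tr\langle\underline\tau,\underline\tau\rangle$ by $\lambda_1|\TT|^2$ is sharp only when the torsion concentrates in the largest eigendirection, while $E\ge\lambda_3|\dd Q|_Q^2$ and $|\TT|^2\le\tfrac32|\dd Q|_Q^2$ decouple the harmonic-map energy from the torsion through worst-case constants; the product of these losses exceeds the margin available at $\tr Q=2^{5/3}$.

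The paper's proof avoids this by never lumping the torsion components together. In the frame diagonalising $Q$ one has the exact identity $(\del_t-\Delta)\tr Q=-\sum_{i,j}\lambda_j^{-1}|\dd Q_{ij}|^2+\sum_i|\tau_i|^2-\tfrac{1}{3}\tr Q\sum_i\lambda_i^{-1}|\tau_i|^2$, and the inequality $\sum_{i,j}\lambda_j^{-1}|\dd Q_{ij}|^2\ge\tfrac13\sum_i|\tau_i|^2$ (from the proof of Lemma~3.13 of \cite{FY}) pairs the energy term directly against the torsion, giving $(\del_t-\Delta)\tr Q\le\sum_i\bigl(\tfrac23-\tfrac{\tr Q}{3\lambda_i}\bigr)|\tau_i|^2$. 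Each coefficient is non-positive precisely when $2\lambda_i\le\tr Q$, i.e.\ when the eigenvalues satisfy the triangle inequalities, and the elementary optimisation showing that $\det Q=1$ and $\tr Q<2^{5/3}$ force these triangle inequalities is exactly where the threshold $2^{5/3}$ comes from (it degenerates at $(2^{2/3},2^{-1/3},2^{-1/3})$, the same configuration you identified). To salvage your route you would need to keep track of which eigendirections carry the torsion, rather than passing through $|\dd Q|_Q^2$ with uniform constants.
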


\begin{proof}
Diagonalising $Q=(\lambda_1, \lambda_2, \lambda_3)$ gives
\begin{equation}\label{max-principle}
\begin{split}
(\partial_t - \Delta) \tr Q
	& = 
		-\sum_{i, j=1}^3 \lambda_j^{-1}|\dd Q_{ij}|^2 
		+ \sum_{i=1}^3 |\tau_i|^2 
		-\frac{1}{3}\sum_{i=1}^3 \lambda_i^{-1}|\tau_i|^2 \tr Q\\
	&\leqslant 
		\sum_{i=1}^3 (\frac{2}{3} -\frac{\tr Q}{3\lambda_i} ) |\tau_i|^2\\
\end{split}
\end{equation}
When $Q = I$, 
the coefficients on the right-hand side are strictly negative. This means that there is a neighbourhood of $I$ for which the right-hand side remains negative and for which we can apply the maximum principle. We now measure this neighbourhood in terms of $\tr Q$. To get an explicit expression for its size, we make the following observation:
for any $A\in [3, 2^{5/3})$ and $x, y, z>0$ satisfying $x+y+z=A,\; xyz=1$, we have
\begin{align*}
	x+y & >z\\
	y+z & >x\\
	z+x & >y
\end{align*}
The argument for this is as follows: assume (without loss of generality) that $x\leqslant y\leqslant z$, then it suffices to show the inequality
\begin{equation}
x+y- (A- (x+y))>0
\label{trace-ineq}\end{equation}
under the assumption that $x + y + x^{-1}y^{-1}=A$. Fix $x$, then smaller $y$ gives bigger $z$, and thus smaller $x+y - z$. Thus, it suffices to consider the case $y=x$. This critical value $x_0$ must solve 
\begin{equation}
2x_0 + x_0^{-2}=A
\label{x0-critical}
\end{equation}
(where $x_0$ is the root of this equation on $(0,1)$). Meanwhile, the critical value of $A$ occurs when~\eqref{trace-ineq} is saturated, i.e., when $x_0 = A/4$. Substituting into~\eqref{x0-critical}, we see that the critical value of $A$ is $A= 2^{5/3}$. It is simple to check that for any $A < 2^{5/3}$ the solution $x_0$ of~\eqref{x0-critical} satisfies $x_0 > \frac{A}{4}$  and so for any $A<2^\frac{5}{3}$~\eqref{trace-ineq} holds.

Now let $Q\in \mathcal{P}$ with $\det Q=1$ and $\tr Q<2^{5/3}$. By the previous paragraph, the three eigenvalues $\lambda_1, \lambda_2, \lambda_3$ of $Q$ satisfy $2\lambda_i - \tr Q <0$ which makes the right-hand side of the evolution equation for $\tr Q$ non-positive. The maximum principle implies that  $\tr Q$ is non-increasing, and so the bound $\tr Q<2^{5/3}$ holds for all time. 
\end{proof}

Given $\epsilon >0$ we define $\delta(\epsilon)>0$ to be the infimum of all $\delta>0$ such that $\tr Q < 3 + \epsilon$ implies $\min\{\lambda_1, \lambda_2, \lambda_3\} \geqslant 1 - \delta$ and $\max\{\lambda_1, \lambda_2, \lambda_3\} \leqslant 1 + \delta$. 

\begin{lemma}
Suppose that $\tr Q(t) < 3 +\epsilon$. Then for any choice of $\eta \in [0,1]$, we have
\[
\left(\del_t - \Delta\right) \tr Q
	\leqslant 
		-\eta (1 - \delta(\epsilon))|\diff Q|^2_Q
		- \frac{1}{3} \big(
					\tr Q - (2+ \eta)(1+\delta(\epsilon))
					\big) |\TT|^2
\]
\end{lemma}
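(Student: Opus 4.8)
The plan is to run a purely pointwise computation starting from the diagonalised evolution identity for $\tr Q$ recorded in the first line of~\eqref{max-principle}. Fixing a point of $M$ and diagonalising $Q=\mathrm{diag}(\lambda_1,\lambda_2,\lambda_3)$ there, we have
\[
\left(\del_t-\Delta\right)\tr Q
=
-\sum_{i,j}\lambda_j^{-1}|\dd Q_{ij}|^2
+\sum_i|\tau_i|^2
-\tfrac13\Big(\sum_k\lambda_k^{-1}|\tau_k|^2\Big)\tr Q .
\]
The strategy is to estimate the Dirichlet-type term and the torsion terms separately using the eigenvalue pinching $1-\delta(\epsilon)\leqslant\lambda_i\leqslant1+\delta(\epsilon)$, which is exactly what the hypothesis $\tr Q<3+\epsilon$ and the definition of $\delta(\epsilon)$ provide, and then to use the free parameter $\eta$ to split the available Dirichlet negativity between the two terms of the claimed bound. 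The final inequality is frame-independent (it only involves $|\dd Q|^2_Q$, $|\TT|^2$ and $\tr Q$), so working in the diagonalising frame is harmless.

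The two elementary inputs are as follows. Since $\lambda_i\geqslant1-\delta(\epsilon)$ and $|\dd Q|^2_Q=\sum_{i,j}\lambda_i^{-1}\lambda_j^{-1}|\dd Q_{ij}|^2$, writing $\lambda_j^{-1}=\lambda_i\,(\lambda_i^{-1}\lambda_j^{-1})$ gives $\sum_{i,j}\lambda_j^{-1}|\dd Q_{ij}|^2\geqslant(1-\delta(\epsilon))|\dd Q|^2_Q$; and since $\lambda_i\leqslant1+\delta(\epsilon)$ and $|\TT|^2=\sum_k\lambda_k^{-1}|\tau_k|^2$, one gets $\sum_i|\tau_i|^2\leqslant(1+\delta(\epsilon))|\TT|^2$. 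With these the endpoint $\eta=1$ is immediate: the Dirichlet term contributes at most $-(1-\delta(\epsilon))|\dd Q|^2_Q$ and the torsion terms at most $-\tfrac13\big(\tr Q-3(1+\delta(\epsilon))\big)|\TT|^2$, which is precisely the claim at $\eta=1$, with no further input needed.

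For general $\eta\in[0,1]$ the idea is to keep only an $\eta$-fraction of the Dirichlet negativity in the first term of the bound, retaining a surplus $-(1-\eta)(1-\delta(\epsilon))|\dd Q|^2_Q$ to help control the torsion. After inserting the two eigenvalue bounds and subtracting the target right-hand side, the remaining inequality collapses to
\[
(1-\eta)\Big(\tfrac{1+\delta(\epsilon)}{3}|\TT|^2-(1-\delta(\epsilon))|\dd Q|^2_Q\Big)\leqslant0 .
\]
As $1-\eta\geqslant0$, this reduces to the pointwise comparison $\tfrac{1+\delta(\epsilon)}{3}|\TT|^2\leqslant(1-\delta(\epsilon))|\dd Q|^2_Q$. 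Here I would invoke the pointwise bound $|\TT|^2\leqslant\tfrac32|\dd Q|^2_Q$ (Lemma 3.13 of~\cite{FY}): it closes the estimate provided $\tfrac32\cdot\tfrac{1+\delta(\epsilon)}{3}\leqslant1-\delta(\epsilon)$, that is, provided $\delta(\epsilon)\leqslant\tfrac13$, which holds throughout the range of $\epsilon$ relevant to Theorem~\ref{C0-long-time-existence}.

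The main obstacle — or rather, the one genuinely nontrivial ingredient — is exactly this last trade. The two pinching bounds alone only yield the $\eta=1$ endpoint, since for $\eta<1$ the torsion terms do not carry enough negativity on their own; one is then forced to spend part of the Dirichlet term, and the only mechanism for doing so is a genuine pointwise comparison between $|\TT|^2$ and $|\dd Q|^2_Q$. Thus the whole family of inequalities hinges on combining the cheap eigenvalue-pinching estimates with the harder comparison $|\TT|^2\leqslant\tfrac32|\dd Q|^2_Q$, together with the compatibility condition $\delta(\epsilon)\leqslant\tfrac13$ that makes the exchange favourable.
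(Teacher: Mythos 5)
Your overall architecture matches the paper's: both arguments start from the diagonalised identity in~\eqref{max-principle}, both use the two pinching bounds $\sum_{i,j}\lambda_j^{-1}|\diff Q_{ij}|^2\geqslant(1-\delta(\epsilon))|\diff Q|^2_Q$ and $\sum_i|\tau_i|^2\leqslant(1+\delta(\epsilon))|\TT|^2$, your $\eta=1$ endpoint is right, and your algebraic reduction of the general-$\eta$ case to the single pointwise inequality $\tfrac{1+\delta(\epsilon)}{3}|\TT|^2\leqslant(1-\delta(\epsilon))|\diff Q|^2_Q$ checks out. The divergence is in how that residual inequality is closed. You invoke the bound $|\TT|^2\leqslant\tfrac32|\diff Q|^2_Q$, which forces the compatibility condition $\delta(\epsilon)\leqslant\tfrac13$. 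The paper instead reaches back to the inequality established \emph{inside} the proof of Lemma~3.13 of \cite{FY}, namely $\sum_{i,j}\lambda_j^{-1}|\diff Q_{ij}|^2\geqslant\tfrac13\sum_i|\tau_i|^2$, and splits the Dirichlet term as the convex combination $\eta(1-\delta(\epsilon))|\diff Q|^2_Q+\tfrac13(1-\eta)\sum_i|\tau_i|^2$ \emph{before} any pinching is applied to it. Because that intermediate inequality carries no factor of $\delta(\epsilon)$, the paper's argument is valid for every $\epsilon$.

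This matters because the lemma is stated with no restriction on $\epsilon$: as written, your proof only establishes it when $\delta(\epsilon)\leqslant\tfrac13$. Your claim that a pointwise comparison between $|\TT|^2$ and $|\diff Q|^2_Q$ is ``the only mechanism'' for trading Dirichlet negativity against torsion is the step that leads you into this corner --- the unweighted comparison $\sum_{i,j}\lambda_j^{-1}|\diff Q_{ij}|^2\geqslant\tfrac13\sum_i|\tau_i|^2$ is a second, $\delta$-free mechanism, and it is the one the paper uses. That said, the restriction is harmless for the intended application (Proposition~\ref{almost-there} works with $\delta(\epsilon)\leqslant 1/10$), so this is a correct proof of a slightly weaker statement rather than a broken argument; to recover the lemma in full, replace the $\tfrac32$ bound by the sharper intermediate inequality from the proof of Lemma~3.13 of \cite{FY} and perform the $\eta$-split on the Dirichlet term directly.
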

\begin{proof}
Consider the first term in the heat inequality~\eqref{max-principle} for $\tr Q$. In the proof of Lemma~3.13 of~\cite{FY}, the following inequality is established for this term:
\[
\sum_{i,j} \lambda_j^{-1} |\diff Q_{ij}|^2 \geqslant \frac{1}{3} \sum |\tau_i|^2
\]
With this in hand we have
\begin{align*}
\sum_{i,j}\lambda_j^{-1} |\diff Q_{ij}|^2 
	&\geqslant
		\eta (1- \delta(\epsilon)) 
		\sum_{i,j} \lambda_i^{-1} \lambda_j^{-1} |\diff Q_{ij}|^2
		+
		\frac{1}{3}(1 - \eta)\sum_{i}|\tau_i|^2
\end{align*}
We now have
\begin{align*}
(\del_t - \Delta) \tr Q
	&=
		- \sum_{i,j} \lambda_j^{-1} |\diff Q_{ij}|^2
		+ \sum_i |\tau_i|^2
		- \frac{1}{3} \tr Q \sum_{i} \lambda_i^{-1} |\tau_i|^2\\
	&\leqslant
		-\eta(1-\delta(\epsilon)) |\diff Q|^2_Q
		+
		\sum_i\left\{\left(
		1 -\frac{1 - \eta}{3} -\frac{\tr Q}{3\lambda_i}
		\right)|\tau_i|^2\right\}
\end{align*}
Now we recall that 
\[
|\TT|^2 = \sum_{i} \lambda_i^{-1} |\tau_i|^2 \geqslant (1+ \delta(\epsilon))^{-1} \sum |\tau_i|^2
\]
From here the stated inequality follows.
\end{proof}

\begin{proposition}\label{almost-there}
There exists $\epsilon_0 >0$ such that if the initial hypersymplectic structure $\underline{\omega}(0)$ has 
\begin{equation}
\tr Q + |\diff Q|^2_Q \leqslant 3 + \epsilon_0
\label{tr-dQ-bound}
\end{equation}
then the hypersymplectic flow starting at $\underline{\omega}(0)$ exists for all time. 
\end{proposition}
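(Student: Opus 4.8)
The plan is to show that the combined quantity $F = \tr Q + |\diff Q|^2_Q$ satisfies a favourable heat inequality $(\del_t - \Delta)F \leqslant 0$ whenever it is small, so that the pointwise bound $F \leqslant 3 + \epsilon_0$ is preserved by the maximum principle. Once this is established, the inequality $\tr Q \geqslant 3$ (from $\det Q = 1$) forces $|\diff Q|^2_Q \leqslant \epsilon_0$ for all time, and then the bound $|\TT|^2 \leqslant \frac{3}{2}|\diff Q|^2_Q$ (Lemma 3.13 of~\cite{FY}) gives a uniform bound on the torsion. Feeding this into Theorem~\ref{FY} (or Theorem~\ref{extendible}) rules out a finite-time singularity and yields long-time existence.

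To run the maximum principle I would estimate $(\del_t - \Delta)F$ at a space-time point where $F$ attains the value $3 + \epsilon_0$; there $\tr Q \leqslant 3 + \epsilon_0 < 2^{5/3}$ and $|\diff Q|^2_Q \leqslant \epsilon_0$, so the eigenvalues of $Q$ are pinched near $1$ with $\delta(\epsilon_0) \to 0$ as $\epsilon_0 \to 0$. For the trace I would apply the preceding Lemma with $\eta$ close to $1$: since $\tr Q$ is within $O(\delta)$ of $3$ and $|\TT|^2 \leqslant \frac{3}{2}|\diff Q|^2_Q$, the possibly unfavourable $|\TT|^2$ term is only $O(\delta)|\diff Q|^2_Q$, giving
\[
(\del_t - \Delta)\tr Q \leqslant -\left(1 - C\delta(\epsilon_0)\right)|\diff Q|^2_Q .
\]
This is the essential gain: a genuinely negative multiple of $|\diff Q|^2_Q$ with coefficient close to $1$, which neither~\eqref{heat-tr} nor the crude form of the trace evolution provides.

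For $|\diff Q|^2_Q$ I would start from~\eqref{heat-dQ}, discard the good $-\frac{1}{16}|\diff Q|^4_Q$ term, and bound the remaining term using $\tr Q < 2^{5/3}$ and $|\TT|^2 \leqslant \frac{3}{2}|\diff Q|^2_Q$ to obtain
\[
(\del_t - \Delta)|\diff Q|^2_Q \leqslant C'' |\diff Q|^4_Q \leqslant C'' \epsilon_0 |\diff Q|^2_Q ,
\]
where the last step is exactly where the smallness $|\diff Q|^2_Q \leqslant \epsilon_0$ is used to turn a quartic term into a linear one with small coefficient. Adding the two estimates yields
\[
(\del_t - \Delta)F \leqslant \left(-1 + C\delta(\epsilon_0) + C''\epsilon_0\right)|\diff Q|^2_Q \leqslant 0
\]
once $\epsilon_0$, and hence $\delta(\epsilon_0)$, is chosen small enough. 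The maximum principle then keeps $F \leqslant 3 + \epsilon_0$, completing the argument as outlined above.

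The main obstacle is the balancing of constants: the bad term in~\eqref{heat-dQ} carries the enormous factor $C(\tr Q)^{19} \sim C\cdot 3^{19}$, and a priori there is no reason for it to be dominated by anything. The point is that it is genuinely \emph{quartic} in $\diff Q$, so the smallness of $|\diff Q|^2_Q$ converts it into a linear term of small coefficient, which the order-one negative term from the trace evolution can absorb. Making this precise requires the refined trace inequality of the previous Lemma together with the quantitative pinching $\delta(\epsilon) \to 0$, and a little care is needed to ensure the signs in that inequality stay favourable (or at worst $O(\delta)$ unfavourable) on the whole region where $F$ is close to $3 + \epsilon_0$, so that the heat inequality $(\del_t - \Delta)F \leqslant 0$ in fact holds on a neighbourhood of the level set $\{F = 3 + \epsilon_0\}$ and the maximum principle genuinely applies.
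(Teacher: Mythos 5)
Your proposal is correct and follows essentially the same route as the paper: the same combined quantity $\tr Q + |\diff Q|^2_Q$, the same refined trace lemma together with~\eqref{heat-dQ}, a maximum-principle/continuity argument to preserve the bound, and then the torsion bound $|\TT|^2 \leqslant \tfrac{3}{2}|\diff Q|^2_Q$ feeding into Theorem~\ref{FY}. The only (harmless) difference is in the bookkeeping: the paper takes $\eta = 1/2$ so as to reserve a term $-\tfrac{1}{12}|\TT|^2$ which absorbs $C(\tr Q)^{19}|\diff Q|^2_Q|\TT|^2$ once $C\,4^{19}\,2\epsilon_0 < \tfrac{1}{12}$, whereas you take $\eta$ near $1$ and instead convert the bad term into $C''\epsilon_0|\diff Q|^2_Q$ to be absorbed by the coefficient $-(1-C\delta)$ from the trace evolution.
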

\begin{proof}
Suppose that the flow exists for $t \in [0,s)$. We will show that the bound \eqref{tr-dQ-bound} holds for all $t\in [0,s)$. From this it will follow that $|\diff Q|^2_Q \leqslant \epsilon_0$ along the flow and so, since $|\TT|^2 \leqslant \frac{3}{2}|\diff Q|^2_Q$, it follows that $|\TT|^2$ is bounded along the flow and so, by Theorem~\ref{FY}, the flow extends past $t=s$. 

To prove that~\eqref{tr-dQ-bound} holds for all $t \in [0,s)$, let
\[
I = \left\{ u \in [0,s) 
    : 
    \sup_{M\times [0,u]} 
    \left(\tr Q + |\diff Q|^2_Q\right) 
      \leqslant 
        3 + \epsilon_0\right\}
\]
By continuity, $I$ is a closed subset of $[0,s)$. We will show that, for $\epsilon_0>0$ sufficiently small, it is also open and hence $I=[0,s)$ as claimed. 

Let $u\in I$, so that \eqref{tr-dQ-bound} holds on $[0,u]$. By continuity, there exists $\delta>0$, such that for all $t \in [0,u+\delta)$, 
\[
\tr Q + |\diff Q|^2_Q < 3 + 2\epsilon_0
\]
Now take $\eta = 1/2$ and $\epsilon = 2\epsilon_0$ in the previous Lemma and choose $\epsilon_0>0$ small enough that $\delta(\epsilon) \leqslant 1/10$. For $t \in [0,u+\delta)$, we have 
\[
\left( \del_t - \Delta\right) \tr Q
	\leqslant
		-\frac{9}{20} |\diff Q|^2_Q - \frac{1}{12} |\TT|^2
\]
Combining this with the heat-inequality~\eqref{heat-dQ} for $|\diff Q|^2_Q$ we see that for all $t \in [0,u+\delta)$, 
\begin{equation}
\left( \del_t - \Delta \right) \left( \tr Q + |\diff Q|^2_Q\right)
\leqslant
\left( C (\tr Q)^{19} |\diff Q|^2_Q - \frac{1}{12} \right) |\TT|^2
-
\frac{9}{20} |\diff Q|^2_Q
\label{heat-trQ-dQ}
\end{equation}
Now we take $\epsilon_0>0$ small enough to also ensure that $C4^{19}2\epsilon_0 < \frac{1}{12}$. With this choice of $\epsilon_0$, we see that for all $t \in [0,u+ \delta)$ the right-hand side of~\eqref{heat-trQ-dQ} is non-positive. It follows that the supremum of $\tr Q + |\diff Q|^2_Q$ is non-increasing on $[0,u+\delta)$ and so~\eqref{tr-dQ-bound}, which a priori holds on $[0,u]$, actually holds on all of $[0,u+\delta)$. This shows that $I$ is open and hence $I=[0,s)$. 
\end{proof}

\begin{proof}[Proof of Theorem~\ref{C0-long-time-existence}]
Let $\underline{\omega}_0$ be an initial hypersymplectic structure with $\tr Q < 3 + \epsilon_0$ where $\epsilon_0$ is that appearing in the hypotheses of Proposition~\ref{almost-there}. Let $K >0$ be a large constant and consider the rescaled hypersymplectic structure $\underline{\omega}_0' = K^2 \underline{\omega}_0$. This leaves $Q$ untouched, but rescales $|\diff Q|^2_Q$ by $K^{-2}$. So if $K$ is chosen large enough, the new starting hypersymplectic structure $\underline{\omega}'_0$ satisfies
\[
\tr Q + |\diff Q|^2_Q < 3 +\epsilon_0
\] 
So the hypersymplectic flow $\underline{\omega}'(t)$ starting at $\underline{\omega}'_0$ exists for all time. Then $\underline{\omega}(t) = K^{-2} \underline{\omega}'(t)$ is the sought-after global solution to hypersymplectic flow starting at $\underline{\omega}_0$. 
\end{proof}

\subsection{Convergence at infinity under assumptions}

The main goal of this subsection is to study the following natural question:

\begin{question}
	Suppose we have a compact global solution of hypersymplectic flow $\underline{\omega}(t)$ for $t \in [0,\infty)$ on a compact $4$-manifold $M$. Assume that
\begin{enumerate}
\item $\sup_{M\times [0,\infty)} \tr Q<\infty$
\item  $\sup_{M\times[0,\infty)} |\TT|^2<\infty$
\end{enumerate}
Do we get convergence as $t \to \infty$?
\end{question}
Examples of such flows are given by Theorem~\ref{C0-long-time-existence} of the previous section. We give two partial answers to this question. The first is:

\begin{theorem}[Convergence with $\Lambda$ and $Q$ bounded]\label{first-convergence}~
Let $(M^4, \underline\omega(t))$ be a compact global solution of the hypersymplectic flow \ref{hypersymplectic-flow}. Assume that
\begin{enumerate}
\item $\sup_{M\times [0,\infty)} \tr Q<\infty$
\item $\sup_{[0,\infty)} \Lambda(\phi(t)) < \infty$
\item $\chi(M) \neq 0$. 
\end{enumerate}
Then for any $t_i\to \infty$, there exists a subsequence $t_{i_k}$ such that 
\[
(M, g_{\underline\omega(t_{i_k})}, \underline\omega(t_{i_k}))\xrightarrow{C^\infty} 
(M, g_{\underline\omega_\infty}, \underline\omega_\infty)
\quad
	\text{ as }k\to \infty
\]
for some hyperk\"ahler structure $(M, g_{\underline\omega_\infty}, \underline\omega_\infty)$ on $M^4$.
\end{theorem}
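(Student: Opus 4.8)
The plan is to extract from the flow a subsequential smooth limit with uniformly controlled geometry, to identify that limit as a torsion-free hypersymplectic structure on $M$, and then to invoke Theorem~\ref{torsion free-hyperkahler} to upgrade torsion-freeness to the hyperk\"ahler condition. The first step is to establish uniform $C^\infty$ control of the four-dimensional geometry along the flow. The hypothesis $\Lambda(\phi(t))\leqslant C$ bounds $|\RRm|$ and $|\bm\nabla\TT|$ uniformly, and the Lotay--Wei Shi-type estimates~\eqref{G2-shi}, applied on each unit time interval (using global existence), then bound all higher derivatives $\bm\nabla^l\RRm$ and $\bm\nabla^{l+1}\TT$. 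Since $\det Q=1$ and $\tr Q$ is bounded, the eigenvalues of $Q$ lie in a fixed compact subinterval of $(0,\infty)$, so $Q$ and $Q^{-1}$ are uniformly bounded; feeding this into the relation $g_\phi=g_{\underline\omega}+Q_{ij}\dd t^i\otimes\dd t^j$ and the associated O'Neill formulas transfers the seven-dimensional bounds to uniform bounds on $\Rm(g_{\underline\omega})$, on $|\diff Q|^2_Q$, and on all of their covariant derivatives. Because $\bm\nabla\phi=\TT*\phi$, the same input also controls $\underline\omega(t)$ in every $C^k$-norm.

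Next I would extract the limit, and here the hypothesis $\chi(M)\neq0$ is the essential ingredient. A sequence of metrics on the fixed compact $M$ with uniformly bounded sectional curvature cannot collapse: bounded-curvature collapse of a closed manifold produces an $F$-structure of positive rank (Cheeger--Fukaya--Gromov), whose existence forces $\chi(M)=0$. Thus $\chi(M)\neq0$ yields a uniform lower bound on the injectivity radius of $g_{\underline\omega(t)}$. (Alternatively, since $\RR(\phi)=-|\TT|^2$ is bounded, Chen's theorem gives $\kappa$-non-collapsing; the topological route via $\chi(M)\neq0$ is cleaner in that it avoids any dependence of $\kappa$ on the length of the time interval.) The volume is moreover pinched between positive constants: from $\tfrac12\langle\omega_i,\omega_j\rangle=Q_{ij}$ one gets $\sum_i[\omega_i]^2=2\int_M\tr Q\,\mu_{\underline\omega}$, which together with $3\leqslant\tr Q\leqslant C$ bounds $\int_M\mu_{\underline\omega}$ above and below. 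With uniform curvature bounds to all orders, a lower injectivity-radius bound, and bounded volume, the Cheeger--Gromov compactness theorem yields, for any $t_i\to\infty$, a subsequence $t_{i_k}$ along which $(M,g_{\underline\omega(t_{i_k})})$ converges in $C^\infty$ to a compact limit diffeomorphic to $M$; the closed forms $\underline\omega(t_{i_k})$ converge to a limit $\underline\omega_\infty$ with $Q_\infty$ non-degenerate, hence to a genuine hypersymplectic structure $(M,g_{\underline\omega_\infty},\underline\omega_\infty)$.

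It then remains to show the limit is torsion-free. For this I would use that the $G_2$-Laplacian flow is the gradient flow of Hitchin's volume functional and increases the volume, so $\mathcal{V}(\phi(t))=\int_M\mu_{\underline\omega(t)}$ is non-decreasing, while~\eqref{volume-bound} bounds it above; hence $\mathcal{V}(\phi(t))$ converges and $\int_0^\infty\tfrac{d}{dt}\mathcal{V}\,\dd t<\infty$. Since $\tfrac{d}{dt}\mathcal{V}$ is a positive multiple of $\int_X|\TT|^2\,\dvol$, we may arrange, after passing to a further subsequence of the $t_{i_k}$, that $\int_X|\TT|^2\,\dvol\to0$. Combined with the uniform bound on $|\nabla|\TT|^2|$ and the non-collapsing, a standard point-concentration argument (if $\sup_M|\TT|^2\geqslant\epsilon$ at some point, then $|\TT|^2\geqslant\epsilon/2$ on a ball of definite radius, of definite volume) forces $\sup_M|\TT|^2(t_{i_k})\to0$, so $\TT_\infty=0$ and $\underline\omega_\infty$ is torsion-free. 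Being torsion-free on the compact, hence complete, limit, Theorem~\ref{torsion free-hyperkahler} shows it is hyperk\"ahler, completing the proof.

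I expect the main obstacle to be the extraction step of the second paragraph: transferring the seven-dimensional bounds into genuine $C^\infty$ control of the four-dimensional pair $(g_{\underline\omega},\underline\omega)$, and, above all, converting $\chi(M)\neq0$ into a uniform non-collapsing bound so that the Cheeger--Gromov limit is a smooth hypersymplectic structure on $M$ rather than a collapsed or lower-dimensional object. By contrast, once a non-collapsed smooth limit is in hand, the passage to torsion-freeness via volume monotonicity and the final appeal to Theorem~\ref{torsion free-hyperkahler} are comparatively routine.
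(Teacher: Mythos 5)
Your overall strategy matches the paper's: Shi-type estimates to get all-order bounds from the $\Lambda$ bound, the Euler characteristic to rule out collapse via Cheeger--Gromov $F$-structures, decay of $\int_M|\TT|^2$ from the monotone, bounded Hitchin volume functional, and finally Theorem~\ref{torsion free-hyperkahler} (or torsion-freeness of the limit $G_2$-structure) to conclude. However, there is a genuine gap at exactly the point you flag as the ``main obstacle''. The Cheeger--Gromov decomposition theorem rules out \emph{total} collapse: the negation of ``admits an $F$-structure of positive rank'' gives, for each $t$, only \emph{one point} $p_t\in M$ with $\mathrm{inj}(M,g_{\underline\omega(t)},p_t)\geqslant\epsilon_0$. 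It does not give the uniform (global) lower bound on the injectivity radius that you assert; bounded curvature and $\chi(M)\neq 0$ are perfectly compatible with the injectivity radius degenerating on part of the manifold (long thin necks), and without more the diameters could be unbounded and the Cheeger--Gromov limit non-compact. The paper closes this gap with a separate contradiction argument that you omit: if $\diam(M,g_{\underline\omega(t_i)})\to\infty$, the pointed limits based at $p_{t_i}$ produce a \emph{complete non-compact} torsion-free hypersymplectic structure of finite volume, whose associated $7$-manifold $M_\infty\times\mathbb{T}^3$ is complete, non-compact, Ricci-flat and of finite volume --- contradicting the Calabi--Yau volume growth lower bound. Only after this diameter bound is in hand does Bishop--Gromov upgrade the single-point non-collapsing to uniform non-collapsing, allowing a compact limit diffeomorphic to $M$.

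A smaller point: you want $\int_X|\TT|^2\to 0$ along a subsequence of the \emph{given} $t_i$, but $L^1$-integrability of $t\mapsto\int_X|\TT|^2$ alone only produces \emph{some} sequence of times along which it tends to zero. The paper's Lemma~\ref{T-to-zero} fixes this by noting that $f(t)=\int_M|\TT|^2\mu_{\underline\omega(t)}$ also has uniformly bounded time derivative (from the evolution equation for $|\TT|^2$ and the $L^4$ bound on $\TT$), whence $f(t)\to 0$ for \emph{all} $t\to\infty$; your point-concentration argument then applies. This is easily repaired, but as written the passage to a further subsequence is not justified.
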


And the second partial answer is

\begin{theorem}[Convergence with Ricci lower bound and diameter upper bound]\label{second-convergence}~
Let $(M^4, \underline\omega(t))$ be a compact global solution of the hypersymplectic flow \ref{hypersymplectic-flow}.  Assume that
\begin{enumerate}
\item
	$\Ric g_{\underline\omega(t)} \geqslant-C$
\item
	$ \sup_{[0, \infty)} \diam(M, g_{\underline\omega(t)})<\infty$
\item
	$\sup_{M\times [0, \infty)} \tr Q < \infty$
\item	
	$\sup_{M\times [0, \infty)} |\TT|^2<\infty$
\end{enumerate}
Then for any $t_i\to \infty$, there exists a subsequence $t_{i_k}$ such that 
\[
(M, g_{\underline\omega(t_{i_k})}, \underline\omega(t_{i_k}))\xrightarrow{C^\infty} 
(M, g_{\underline\omega_\infty}, \underline\omega_\infty)
\quad
	\text{ as }k\to \infty
\]
for some hyperk\"ahler structure $(M, g_{\underline\omega_\infty}, \underline\omega_\infty)$ on $M^4$.\end{theorem}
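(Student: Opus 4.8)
The plan is to run the same blow-up analysis as in the proof of Theorem~\ref{FY}, after first converting the hypotheses (1)--(4) into the a priori bounds needed to produce, and then rule out, a bubble. First I would bound $|\diff Q|^2_Q$ along the flow. Feeding the uniform bounds $\tr Q < K$ and $|\TT|^2 < L$ from (3) and (4) into the heat inequality~\eqref{heat-dQ} and discarding the good Hessian term gives
\[
\left(\del_t - \Delta\right)|\diff Q|^2_Q \leqslant -\tfrac{1}{16}|\diff Q|^4_Q + CK^{19}L\,|\diff Q|^2_Q .
\]
By the maximum principle the quantity $h(t) = \sup_M |\diff Q|^2_Q$ obeys the logistic inequality $h' \leqslant -\tfrac{1}{16}h^2 + CK^{19}L\,h$, whose solutions stay below $\max\{h(0),\,16CK^{19}L\}$; hence $|\diff Q|^2_Q$ is uniformly bounded. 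Using the relation between the $4$- and $7$-dimensional scalar curvatures, which since $\det Q = 1$ reads $R = \RR(\phi) + \tfrac14|\diff Q|^2_Q = \tfrac14|\diff Q|^2_Q - |\TT|^2$ by~\eqref{G2-scalar-curvature} (and specialises to Lemma~3.5 of~\cite{FY} in the torsion-free case), the $4$-dimensional scalar curvature $R$ is bounded. Combined with the lower bound (1), each eigenvalue of $\Ric$ then lies in a fixed interval, so $\Ric$ is bounded on both sides.

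Next I would assemble the global geometric bounds. Because the flow is the gradient flow of Hitchin's volume $\mathcal{V}$, with $\tfrac{\dd}{\dd t}\mathcal{V} = c\int_X |\TT|^2\,\nu_\phi$ for some $c>0$, and $\mathcal{V}$ is non-decreasing and bounded above by~\eqref{volume-bound}, the total volume is pinched between $\mathcal{V}(0)>0$ and the cohomological upper bound, and moreover $\int_0^\infty\!\int_M |\TT|^2\,\mu_{\underline\omega}\,\dd t < \infty$. Refining the given sequence, we may thus assume $\int_M |\TT|^2\,\mu_{\underline\omega} \to 0$. The lower Ricci bound (1), the diameter bound (2) and the volume lower bound yield, via Bishop--Gromov relative volume comparison, a uniform non-collapsing $\Vol B(x,r) \geqslant \kappa r^4$ for radii up to the diameter; and Gauss--Bonnet, together with the two-sided Ricci bound and the volume bound, gives a uniform $\int_M |\Rm|^2 \leqslant C$.

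Finally I would rule out curvature concentration exactly as in Step~2 of the proof of Theorem~\ref{FY}. If $\sup_M |\Rm(g_{\underline\omega(t_{i_k})})|$ were unbounded, then rescaling at points of maximal curvature would, by the non-collapsing, the scale-invariant $L^2$ bound and the two-sided Ricci bound, produce a complete non-flat Ricci-flat $4$-manifold with Euclidean volume growth and finite energy. Since $|\TT|^2$ and $|\diff Q|^2_Q$ are bounded, both scale to zero, so $Q$ becomes constant and the torsion vanishes in the limit: the bubble is a non-trivial ALE gravitational instanton that is hyperk\"ahler, as in Theorem~\ref{hypersymplectic-bubble}. Kronheimer's classification then furnishes an embedded holomorphic $2$-sphere, and the symplectic-class argument of Theorem~\ref{FY}---using $[S_i]^2 = -2$, the finiteness of the admissible classes and the positivity of symplectic area---gives a contradiction. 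Hence $|\Rm|$ stays bounded along the subsequence; with the non-collapsing and diameter bound this gives $C^\infty$ subconvergence on $M$ (Cheeger--Gromov, higher derivatives from Shi-type estimates for the flow) of the metrics and, since $\tr Q$ and $|\diff Q|^2_Q$ are controlled while $\det Q = 1$ keeps $Q$ uniformly positive definite, of the triples $\underline\omega(t_{i_k})$ to a limit $\underline\omega_\infty$. As $\int_M |\TT|^2 \to 0$, the limit is torsion-free, and being compact it is complete, so Theorem~\ref{torsion free-hyperkahler} forces $\underline\omega_\infty$ to be hyperk\"ahler.

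The hard part will be the last step: making the bubbling analysis rigorous at $t \to \infty$ rather than at a finite singular time---in particular verifying that the rescaled forms converge and that the limiting spheres $S_i \subset M$ genuinely carry the hypersymplectic area driving the contradiction---and pinning down the precise scalar-curvature identity and the first-variation formula $\tfrac{\dd}{\dd t}\mathcal{V} = c\int_X |\TT|^2\,\nu_\phi$ used to extract the integral torsion decay.
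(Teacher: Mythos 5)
Your overall strategy is the same as the paper's: establish non-collapsing from Bishop--Gromov, obtain a uniform $L^2$ curvature bound, rule out curvature concentration by producing an ALE hyperk\"ahler bubble and contradicting Kronheimer's classification via the symplectic-area argument, and then pass to a torsion-free (hence, by Theorem~\ref{torsion free-hyperkahler}, hyperk\"ahler) subsequential limit using the decay of $\int_M|\TT|^2$. Two of your steps are genuinely more elementary than the paper's. For the energy bound, the paper runs the integral $\mathcal{Z}$-quantity estimates of Lemma~\ref{energy-bound} (imported from \cite{FY}) to get $\RRic\in L^2$ and only then invokes Chern--Gauss--Bonnet; you instead combine the scalar-curvature identity $R=\tfrac14|\diff Q|^2_Q-|\TT|^2$ with hypothesis (1) to get a \emph{pointwise} two-sided Ricci bound, after which Gauss--Bonnet and the volume bound give $\int_M|\Rm|^2\leqslant C$ directly. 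This is cleaner and suffices here, since the proof of Theorem~\ref{second-convergence} only needs the $L^2$ bound on $\Rm$ (the space-time integral bounds of Lemma~\ref{energy-bound} are not used). Your derivation of $\int_M|\TT|^2\to0$ from monotonicity of Hitchin's volume is essentially the same computation as Lemma~\ref{T-to-zero} (the paper additionally bounds $f'$ to get full, rather than subsequential, decay, but a subsequence is all that is needed).

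The one point that needs repair is the normalisation of the blow-up and the final compactness step. You rescale at points of maximal \emph{four-dimensional} curvature $|\Rm|$ and later assert that boundedness of $|\Rm|$, together with non-collapsing, yields $C^\infty$ subconvergence of the triples. But the Shi-type estimates and the compactness theory for the flow (Lotay--Wei, and the machinery of Section~\ref{survey-FY} that you invoke) are phrased in terms of $\Lambda=\sup\bigl(|\RRm|^2+|\bm\nabla\TT|^2\bigr)^{1/2}$, and a bound on the four-dimensional $|\Rm|$ together with $|\diff Q|^2_Q$ does not control the second derivatives $\hat\nabla\diff Q$ of $Q$, hence does not control $|\RRm|$ or $|\bm\nabla\TT|$. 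The paper therefore runs the contradiction argument for $\Lambda$ itself: choosing $(p_i,t_i)$ where $\Lambda$ achieves its running maximum, rescaling by $\Lambda(p_i,t_i)$, and using the global bounds on $|\TT|^2$ and $\tr Q$ to see the limit flow is static and hyperk\"ahler, with finite energy and Euclidean volume growth, before applying Kronheimer. Once $\Lambda$ (not merely $|\Rm|$) is shown to be uniformly bounded, the smooth subconvergence of $\underline\omega(t_{i_k})$ follows exactly as in Theorem~\ref{first-convergence}. Substituting $\Lambda$ for $|\Rm|$ throughout your last step closes this gap without changing anything else in your argument.
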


\subsubsection*{A priori bounds}
In order to obtain these convergence theorems, we first derive two useful a~priori estimates. 

\begin{lemma}\label{dQ-bound-at-infinity}
	Assume $\sup_{M\times [0,\infty)} \tr Q=K<\infty$ and $\sup_{M\times[0,\infty)} |\TT|^2=\frac{\beta}{2}<\infty$, then there is an explicit upper bound: 
	$$\sup_{M\times [0,\infty)} |\dd Q|_Q^2 \leqslant \max\{ \sup_M |\dd Q|_Q^2(0), 20 CK^{19}\beta\}$$
\end{lemma}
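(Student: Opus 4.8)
The plan is to feed the uniform hypotheses directly into the parabolic inequality \eqref{heat-dQ} and reduce everything to a scalar ODE satisfied by the spatial maximum of $|\dd Q|_Q^2$. Since we are in the setting of a global solution on a \emph{compact} $4$-manifold, the maximum principle is available. First I would discard the manifestly non-positive Hessian term $-|\hat\nabla \dd Q|_Q^2$ on the right-hand side of \eqref{heat-dQ} and substitute the hypotheses $\tr Q \leqslant K$ and $|\TT|^2 \leqslant \beta/2$. Writing $f = |\dd Q|_Q^2$, this yields the clean differential inequality
\[
\left(\del_t - \Delta\right) f \leqslant -\tfrac{1}{16} f^2 + \tfrac{1}{2} C K^{19} \beta\, f
\]
valid on all of $M \times [0,\infty)$.

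Next I would pass to the function $h(t) = \sup_{M \times \{t\}} f$ and apply the maximum principle. Since $M$ is compact, $h$ is locally Lipschitz; by Hamilton's trick (differentiating the evolving maximum and using that $\Delta f \leqslant 0$ at a spatial maximum) one obtains the scalar ODE inequality
\[
h'(t) \leqslant -\tfrac{1}{16} h(t)^2 + \tfrac{1}{2} C K^{19} \beta\, h(t)
\]
for almost every $t$. The right-hand side, viewed as a function of $h$, factors as $h\left(-\tfrac{1}{16} h + \tfrac{1}{2} C K^{19}\beta\right)$ and so is strictly negative precisely when $h > 8 C K^{19}\beta$; the value $h^\ast = 8 C K^{19}\beta$ is thus a stable equilibrium of the associated ODE.

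From here the conclusion is a standard comparison. If $h(0) \leqslant h^\ast$ then $h(t)$ can never exceed $h^\ast$: were it to cross $h^\ast$ at some first time, the inequality above would force $h$ to be strictly decreasing there, a contradiction. If instead $h(0) > h^\ast$ then $h$ is strictly decreasing for as long as it stays above $h^\ast$, so $h(t) \leqslant h(0)$ throughout. In either case $h(t) \leqslant \max\{h(0), 8 C K^{19}\beta\}$, which is dominated by the stated bound $\max\{\sup_M |\dd Q|_Q^2(0), 20 C K^{19}\beta\}$ since $8 < 20$; the slack in the constant leaves comfortable room in every estimate.

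The only point requiring genuine care is the maximum-principle reduction to the scalar ODE: because $h$ is a supremum of a family of smooth functions it need not be everywhere differentiable, so one must invoke Hamilton's trick (or, equivalently, compare $f$ pointwise against spatially constant solutions of the ODE as barriers) in order to legitimately differentiate $h$ and evaluate $\Delta f \leqslant 0$ at the moving maximum point. Everything else—discarding the Hessian term, inserting the two uniform bounds, the equilibrium analysis, and the final comparison—is routine.
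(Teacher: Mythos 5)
Your proposal is correct and follows essentially the same route as the paper: drop the Hessian term in \eqref{heat-dQ}, insert the uniform bounds on $\tr Q$ and $|\TT|^2$ to get $h' \leqslant -\tfrac{1}{16}h^2 + \tfrac{C}{2}K^{19}\beta\, h$ for the spatial supremum, and conclude $h \leqslant \max\{h(0), 8CK^{19}\beta\}$ by an ODE comparison at a first crossing (the paper phrases this as a contradiction at the maximum point of $h$ on $[0,a]$, and likewise obtains the sharper constant $8$ against the stated $20$). Your explicit attention to Hamilton's trick for differentiating the supremum is a point the paper leaves implicit.
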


\begin{proof}
	By~\eqref{heat-dQ} (proved in \cite[Proposition 4.5]{FY}), there exists a constant $C$ such that
	
	\[
	(\del_t -\Delta) |\diff Q|^2_Q
	\leq
	-|\hat{\nabla}\diff Q|^2_Q
	-\frac{1}{16} |\diff Q|^4_Q
	+ C (\tr Q)^{19} |\mathbf{T}|^2 |\diff Q|^2_Q
	\]
	Let $f(t)=\sup_{M} |\dd Q|_Q^2(\cdot, t)$, then 
	we have 
	\begin{equation}\label{fdecreasing}
	f'\leqslant -\frac{1}{16} f^2 + \frac{C}{2} K^{19}\beta f 
	\leqslant -\frac{1}{16}(f - 4 CK^{19} \beta)^2 + C^2 K^{38} \beta^2 
	\end{equation}
Now, the maximum principle applied to (\ref{fdecreasing}) shows that 
	\begin{equation}
	\sup_{[0,\infty)}f \leqslant \max\{f(0), 8 CK^{19}\beta\}
	\end{equation}
Indeed if we suppose that $\sup_{[0,\infty)} f > \max\{ f(0), 8 CK^{19}\beta\}$, then there exists $a\in (0, \infty)$ such that $f(a)>\max\{ f(0), 8 CK^{19}\beta\}$. 
	Thus $f'(a) < 0$ by the inequality. Therefore, at the maximal point $b$ of $f$ on $ (0,a)$,  
	$$0= f'(b) \leqslant - \frac{1}{16} (f(b) - 4 CK^{19}\beta)^2 + C^2 K^{19} \beta^2 <0$$
	which gives the contradiction.
\end{proof}

\begin{lemma}[Energy bound]\label{energy-bound}
	Assume $\sup_{M\times [0,\infty)} \tr Q=K<\infty$ and $\sup_{M\times[0,\infty)} |\TT|^2=\frac{\beta}{2}<\infty$, then 
	$$\sup_{[0,\infty)} \int_M |\Rm|^2\leqslant C\left(\beta, K, |\dd Q_0|_{Q_0}^2, ||\Rm(g_0)||_{L^2}\right)$$
	and for any $\alpha>0$, 
	$$\int_\alpha^{\alpha+1}\int_{M^4} |\Ric|^4 + |\hat\Delta Q|_Q^4 + |\nabla\Ric|^2 + |\nabla\hat\Delta Q|_Q^2 + |\hat\nabla\dd Q|_Q^2+
	|\Ric|^2|\hat\nabla \dd Q|_Q^2 + |\hat\Delta Q|_Q^2|\hat\nabla\dd Q|_Q^2< G$$
	for some constant $G= G(\beta, K, |\dd Q_0|_{Q_0}, ||\Rm(g_0)||_{L^2})$.
\end{lemma}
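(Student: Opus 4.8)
The plan is to adapt Simon's strategy (the one behind Theorem~\ref{simon-L2-curvature} for Ricci flow) to the coupled system \eqref{Q-evolution}--\eqref{metric-evolution}, treating the pair $(\Ric,\hat\Delta Q)$ as the ``velocity'' of a Ricci/harmonic-map flow. Before anything else I would record three pointwise facts. First, Lemma~\ref{dQ-bound-at-infinity} gives $|\dd Q|^2_Q\leqslant C$, and together with $\tr Q\leqslant K$ and $|\TT|^2\leqslant\beta/2$ this bounds the four-dimensional scalar curvature $R=R(g_{\underline\omega})$ pointwise: one has $R=\tfrac14|\dd Q|^2_Q$ in the torsion-free case (compare \eqref{Ric-equation}), and in general the correction is algebraic and controlled by $|\TT|^2$ and $|\dd Q|^2_Q$ through the relation $\RR(\phi)=-|\TT|^2$ between the $4$- and $7$-dimensional scalar curvatures. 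Second, since the flow preserves the cohomology classes of the $\omega_i$, the volume bound \eqref{volume-bound} is uniform in $t$. Third, in dimension four the Gauss--Bonnet integrand lets me trade full curvature for Ricci curvature, $\int_M|\Rm|^2=32\pi^2\chi(M)+\int_M(4|\Ric|^2-R^2)\,\dvol$, so that with $R$ and the volume controlled, a uniform bound on $\int_M|\Ric|^2$ is equivalent to the first assertion.

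The first genuine estimate is a \emph{space-time} $L^2$ bound on the velocities, obtained by integrating quantities that are already controlled. Integrating the evolution of the total scalar curvature $\int_M R\,\dvol$ under \eqref{metric-evolution} (where $\partial_t g=-2\Ric$ plus bounded $\TT,\dd Q$-terms) over a unit interval $[\alpha,\alpha+1]$, every contribution is either a total divergence, a bounded boundary term $\int_M R\,\dvol$ evaluated at the endpoints, or a term controlled by the pointwise bounds, while the reaction produces $+2\int|\Ric|^2$; this yields $\int_\alpha^{\alpha+1}\!\int_M|\Ric|^2\leqslant C$ uniformly in $\alpha$. Running the same argument on $\int_M|\dd Q|^2_Q$ through the Bochner formula \eqref{bochner} (whose good term is $|\hat\nabla\dd Q|^2_Q$ and whose curvature term $K_{\mathcal P}\leqslant 0$ is discarded) and on the trace of \eqref{Q-evolution}, and using that $|\dd Q|^2_Q$ is pointwise bounded, gives likewise $\int_\alpha^{\alpha+1}\!\int_M\big(|\hat\nabla\dd Q|^2_Q+|\hat\Delta Q|^2_Q\big)\leqslant C$. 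This already secures the $|\hat\nabla\dd Q|^2_Q$ term of the claimed inequality.

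Next I would upgrade to the uniform-in-time bound. Set $y(t)=\int_M\big(|\Ric|^2+\lambda|\hat\Delta Q|^2_Q\big)$ for a coupling constant $\lambda$ chosen so that the cross terms between the Ricci and harmonic-map reactions cancel, as in Buzano's coupled flow \cite{Muller}. Differentiating and using the Bianchi-type evolution $\partial_t\Ric=\Delta\Ric+\Rm*\Rm+(\text{torsion})$ and the evolution of $\hat\Delta Q$, the Laplacian terms integrate by parts to the good terms $-\int\big(|\nabla\Ric|^2+|\nabla\hat\Delta Q|^2_Q\big)$, while the reaction terms are cubic or quartic in the velocities. A \emph{uniform} four-dimensional Sobolev inequality lets me absorb a small multiple of $\int|\nabla(\cdot)|^2$ and reduce to $y'\leqslant-\tfrac12\int\big(|\nabla\Ric|^2+|\nabla\hat\Delta Q|^2_Q\big)+Cy^2+B(t)$ with $\int_\alpha^{\alpha+1}B\leqslant C$. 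The decisive trick is to read the quadratic $Cy^2$ as $A(t)\,y$ with $A=Cy$, whose unit-interval integral $\int_\alpha^{\alpha+1}A=C\int_\alpha^{\alpha+1}y$ is bounded \emph{by the previous paragraph}; choosing $\tau\in[t-1,t]$ with $y(\tau)\leqslant C$ (possible since $\int_{t-1}^t y\leqslant C$) and applying Gronwall on $[\tau,t]$ gives $\sup_t y<\infty$, which with Gauss--Bonnet is the bound $\sup_t\int_M|\Rm|^2\leqslant C$. Feeding this back and integrating the good terms over $[\alpha,\alpha+1]$ bounds $\int_\alpha^{\alpha+1}\!\int\big(|\nabla\Ric|^2+|\nabla\hat\Delta Q|^2_Q\big)$, and a final use of the Sobolev inequality converts these, together with $\sup_t y$, into the remaining space-time bounds on $|\Ric|^4$, $|\hat\Delta Q|^4_Q$ and the mixed products $|\Ric|^2|\hat\nabla\dd Q|^2_Q$ and $|\hat\Delta Q|^2_Q|\hat\nabla\dd Q|^2_Q$.

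The main obstacle is the critical cubic reaction term $\int|\Rm|^3$: in dimension four it is scale invariant and cannot be absorbed without a \emph{uniform-in-time} Sobolev inequality. This is precisely where the scalar curvature bound pays off: since $\partial_t g+2\Ric$ equals the bounded correction in \eqref{metric-evolution}, Chen's theorem \cite{Chen} gives $\kappa$-non-collapsing, which with the uniform volume bound \eqref{volume-bound} produces a uniform Sobolev constant along the flow, exactly as in the bounded-scalar-curvature theory underlying Theorem~\ref{simon-L2-curvature}. The secondary difficulty, relative to pure Ricci flow, is the bookkeeping of the torsion and $Q$-correction terms throughout; all of these are lower order and controlled by the pointwise bounds on $\tr Q$, $|\dd Q|^2_Q$ and $|\TT|^2$, but tracking the powers of $\tr Q$ (as in the factor $(\tr Q)^{19}$ of \eqref{heat-dQ}) requires care.
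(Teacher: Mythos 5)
Your overall architecture (pointwise bounds from Lemma~\ref{dQ-bound-at-infinity}, the uniform volume bound~\eqref{volume-bound}, Chern--Gauss--Bonnet to trade $\Rm$ for $\Ric$, and a Simon-style integral argument for the coupled system) matches the paper's intent, but the central analytic step fails as written. You propose to derive $y'\leqslant-\tfrac12\int\bigl(|\nabla\Ric|^2+|\nabla\hat\Delta Q|^2_Q\bigr)+Cy^2+B(t)$ by using a uniform Sobolev inequality to ``absorb a small multiple of $\int|\nabla(\cdot)|^2$''. In dimension four this absorption is not available: the reaction term is of the type $\int|\Rm|\,|\Ric|^2\leqslant\|\Rm\|_{L^2}\|\Ric\|_{L^4}^2$, and the critical embedding $W^{1,2}\hookrightarrow L^4$ gives $\|\Ric\|_{L^4}^2\leqslant C_S\bigl(\|\nabla\Ric\|_{L^2}^2+\|\Ric\|_{L^2}^2\bigr)$, so the gradient term reappears with coefficient $C_S\|\Rm\|_{L^2}$ --- which is exactly the quantity you are trying to bound and is not small. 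No interpolation in the critical dimension turns this into $Cy^2$ plus an absorbable multiple of the gradient term, and your subsequent (otherwise legitimate) good-slice-plus-Gronwall argument never gets off the ground. This is precisely the difficulty that Simon's weighted functional is designed to defeat.

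The paper's proof instead imports inequality (29) of \cite{FY} for the functional $\mathcal{Z}=\frac{|\RRic|^2}{\RR+\beta}+A_1|\RRic|^2|\dd Q|_Q^2+A_2|\RRic|^2+A_3|\dd Q|_Q^2$: because $\RR=-|\TT|^2\in[-\beta/2,0]$, the denominator satisfies $\beta/2\leqslant\RR+\beta\leqslant\beta$, and differentiating the quotient produces a \emph{negative quartic} good term $-\frac{1}{\beta^2}|\RRic|^4$ with a constant independent of any Sobolev constant. Cauchy--Schwarz then converts $-\int|\RRic|^4$ into $-\frac{1}{\mathrm{Vol}}\bigl(\int|\RRic|^2\bigr)^2$, yielding a Riccati inequality $f'\leqslant-\frac{1}{AV_0}f^2+Ff+F(\chi+V_0)$ whose solutions are automatically uniformly bounded by a simple maximum-principle argument --- no space-time integrability of $y$ or choice of good starting times is needed. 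The only genuinely new input relative to the finite-time argument of \cite{FY} is your Lemma~\ref{dQ-bound-at-infinity}, which replaces the finite-time bound on $|\dd Q|^2_Q$; the space-time bounds on the remaining quantities then come from integrating the same inequality over $[\alpha,\alpha+1]$ after the uniform bound on $\int\mathcal{Z}$ is in hand. To repair your proposal you would need to replace the Sobolev absorption by this weighted-functional mechanism (or an equivalent source of a sign-definite quartic term).
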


\begin{proof}
	We begin by recalling inequality \cite[Equation (29)]{FY} which says that if the flow exists on finite time interval $[0, T_m)$ and $|\TT|^2\leqslant \frac{\beta}{2}$ on this interval, then 
	
	\begin{equation}\label{EvolvingZquantity}
	\begin{split}
	\frac{d}{dt}\int \mathcal{Z}(A_1, A_2, A_3) 
	& \leqslant \int -\frac{1}{\beta^2} \Big( |\RRic|^4 + |\RRic|^2 |\hat\nabla\dd Q|_Q^2 + |\bm\nabla\RRic|^2 + |\hat\nabla\dd Q|_Q^2\Big) \\
	& \qquad + F\Big( \mathcal{Z}(A_1, A_2, A_3) + I +1\Big)
	\end{split}
	\end{equation}
	for some constant $F= F(|\dd Q_0|_{Q_0}, \tr Q_0, \beta, T_m)$. Here 
	$$\mathcal{Z}(A_1, A_2, A_3)
	=\frac{|\RRic|^2}{\RR+\beta}+A_1 |\RRic|^2|\dd Q|_Q^2 + A_2 |\RRic|^2 + A_3 |\dd Q|_Q^2 $$ for some positive constants $A_1, A_2, A_3$ depending only on $K, \beta, |\dd Q_0|_{Q_0}$. 	

The only essential difference between the global solution and the finite time situation in \cite{FY} is that we do not have an a~priori bound of $\tr Q$ in terms of $\beta$ and $T_m$. However, we know 
	$|\dd Q|_Q^2$ is uniformly bounded in terms of $\beta, K$ and $|\dd Q_0|_{Q_0}^2$ according to Lemma \ref{dQ-bound-at-infinity}, which suffices to give the inequality above.  
	
	Using Cauchy-Schwarz inequality, we get
	\begin{equation}\label{EvolvingZquantityA}
	\begin{split}
	\frac{d}{dt}\int \mathcal{Z}(A_1, A_2, A_3)  
	& \leqslant  -\frac{1}{\beta^2\text{Vol}(g_{\underline{\omega}(t)})} \left( \int |\RRic|^2 \right)^2\\
	\\
	&\qquad\qquad
	- \frac{1}{\beta^2}\int \Big( |\RRic|^2 |\hat\nabla\dd Q|_Q^2 + |\bm\nabla\RRic|^2 + |\hat\nabla\dd Q|_Q^2\Big) \\
	& \qquad\qquad\qquad\qquad +
		 F\int \mathcal{Z}(A_1, A_2, A_3)  + F(\chi  + \text{Vol}(g_{\underline{\omega}(t)}))\\
	& \leqslant  -\frac{1}{ A \text{Vol}(g_{\underline{\omega}(t)})} \left( \int \mathcal{Z}(A_1, A_2, A_3) \right)^2\\
	&\qquad\qquad
	- \frac{1}{\beta^2}\int \Big( |\RRic|^2 |\hat\nabla\dd Q|_Q^2 + |\bm\nabla\RRic|^2 + |\hat\nabla\dd Q|_Q^2\Big) \\
	& \qquad\qquad\qquad\qquad
		+ F\int \mathcal{Z}(A_1, A_2, A_3)  + F(\chi  + \text{Vol}(g_{\underline{\omega}(t)}))\\
	\end{split}
	\end{equation}
	
Recall from \eqref{volume-bound} the absolute volume upper bound
$\label{volume-upper-bound-flow}
	\sup_{[0, \infty)}\text{Vol}(g_{\underline{\omega}(t)})\leqslant V_0
$. Denote
	$$f=\int \mathcal{Z}(A_1, A_2, A_3) $$
	then by (\ref{EvolvingZquantityA}) we have on $[0, \infty)$, 
	
	$$f'\leqslant -\frac{1}{AV_0}f^2 + F f + F(\chi + V_0)$$
	An easy maximum principle argument, similar to that used in Lemma \ref{dQ-bound-at-infinity}, shows the following explicit uniform bound:
	\begin{equation}
	\sup_{[0, \infty)} f\leqslant \max\Big\{ f(0), \frac{1}{2} FAV_0 + \sqrt{\frac{1}{4}F^2 A^2 V_0^2 + FAV_0(\chi + V_0)} \Big\}
	\end{equation}
In particular, the 7-dimensional Ricci curvature $\bf{Ric}$ is bounded in $L^2$. By Lemma~3.5 of~\cite{FY} (and using the fact that $|\diff Q|^2_Q$ is uniformly bounded) this gives an $L^2$-bound on the 4-dimensional Ricci curvature. Now, by Chern--Gauss--Bonnet, this translates into an $L^2$-bound on the whole curvature. 
	
	Meanwhile, plugging the bound on $f$ back into inequality (\ref{EvolvingZquantity}) and integrating on $[\alpha, \alpha+1]$, we get 
	\begin{equation}
	\int_\alpha^{\alpha+1} \int |\RRic|^4 + |\RRic|^2 |\hat\nabla\dd Q|_Q^2 + |\nabla\RRic|^2 + |\hat\nabla\dd Q|_Q^2 < G = G(\beta, K, |\dd Q_0|_{Q_0}^2)
	\end{equation}
	
\end{proof}

\begin{corollary}
    Under the assumption of Lemma \ref{dQ-bound-at-infinity}, there is a sequence $t_i\to \infty$ such that $(M, \underline\omega(t_i), g_{\underline\omega(t_i)})$
	has uniform bounds on the following quantities: 
	$$|| \mathrm{R} ||_{L^\infty},\quad ||\Rm||_{L^2},\quad ||\Ric||_{L^4},\quad\Vol$$
\end{corollary}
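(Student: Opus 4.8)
The plan is to observe that three of the four quantities are in fact controlled at \emph{every} time $t$, so that only the $L^4$-bound on $\Ric$ will require a judicious choice of times. First I would record the volume bounds. The upper bound $\Vol(g_{\underline\omega(t)}) \le V_0 := \tfrac16\sum_i[\omega_i]^2$ holds for all $t$ by~\eqref{volume-bound}, the right-hand side depending only on the flow-invariant cohomology classes $[\omega_i]$. For the matching lower bound I would pair the hypothesis with these fixed classes: since $\omega_i\wedge\omega_j = 2Q_{ij}\,\mu_{\underline\omega}$ and $\mu_{\underline\omega}$ is the metric volume form, one has $\tfrac12\sum_i[\omega_i]^2 = \int_M \tr Q\,\mu_{\underline\omega} \le K\,\Vol(g_{\underline\omega(t)})$, whence $\Vol(g_{\underline\omega(t)}) \ge (2K)^{-1}\sum_i[\omega_i]^2 > 0$ uniformly. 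The uniform bound $\int_M|\Rm|^2 \le C$ is exactly the first conclusion of Lemma~\ref{energy-bound}, and again holds at every $t$.

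Next I would treat the scalar curvature, which is also controlled at every time. Here I would invoke the pointwise scalar-curvature formula of \cite[Lemma~3.5]{FY}: the scalar curvature $R$ of $g_{\underline\omega}$ is an algebraic expression in $|\dd Q|_Q^2$ and $|\TT|^2$, of the form $R = \tfrac14|\dd Q|_Q^2 - |\TT|^2$ (reducing to $R = \tfrac14|\dd Q|_Q^2$ when $\TT=0$, as used in the torsion-free section), while the $7$-dimensional scalar curvature is $\RR = -|\TT|^2$. By hypothesis $|\TT|^2 \le \beta/2$, and Lemma~\ref{dQ-bound-at-infinity} provides a uniform bound on $|\dd Q|_Q^2$; hence $\|\mathrm R\|_{L^\infty}$ is uniformly bounded at every $t$, on either interpretation of $\mathrm R$.

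The only quantity not controlled pointwise in time is $\|\Ric\|_{L^4}$: the second conclusion of Lemma~\ref{energy-bound} provides merely the time-\emph{integrated} bound $\int_\alpha^{\alpha+1}\int_M |\Ric|^4 < G$ for every $\alpha \ge 0$, with $G$ independent of $\alpha$. This is where the sequence $t_i$ is forced upon us. Applying the estimate with $\alpha = i \in \N$ and a pigeonhole (mean-value) argument, for each $i$ there must exist $t_i \in [i,i+1]$ with $\int_M |\Ric|^4(t_i) \le G$, since otherwise the spatial integral would exceed $G$ throughout $[i,i+1]$ and violate the bound. Then $t_i \to \infty$ and $\|\Ric\|_{L^4}(t_i) \le G^{1/4}$. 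Combining this with the at-all-times estimates of the previous two paragraphs, all four quantities are uniformly bounded along the sequence $t_i$, which is the assertion.

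I do not expect a serious obstacle: the substantive analytic work is already packaged into Lemmas~\ref{dQ-bound-at-infinity} and~\ref{energy-bound}, and the corollary is largely bookkeeping. The one point that must be got right is precisely why the statement is phrased along a \emph{sequence} $t_i$ rather than for all $t$: the Ricci control available to us is only integral-in-time, so the $L^4$-bound can be extracted only on a well-chosen set of times, whereas volume, $L^2$-curvature, and $L^\infty$-scalar curvature are genuinely uniform in $t$. A secondary subtlety worth flagging is that the volume lower bound is not supplied by~\eqref{volume-bound} alone but relies essentially on the hypothesis $\tr Q \le K$ to prevent the total volume from collapsing.
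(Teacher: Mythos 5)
Your proposal is correct and is essentially the argument the paper intends (the corollary is stated without proof, as an immediate consequence of Lemmas~\ref{dQ-bound-at-infinity} and~\ref{energy-bound}): the volume and $L^2$-curvature bounds hold for all $t$, the $L^\infty$ scalar curvature bound follows from the pointwise control of $|\dd Q|^2_Q$ and $|\TT|^2$, and the $L^4$ Ricci bound is extracted along a sequence by the mean-value/pigeonhole argument applied to the space-time integral on $[\alpha,\alpha+1]$. You correctly identify that the sequential formulation is forced only by the Ricci term, and your volume lower bound via $\tr Q\leqslant K$ is a sensible (if strictly speaking optional) addition.
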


\subsubsection*{Proof of the convergence theorems}

\begin{lemma}\label{T-to-zero}
	For a global solution of the hypersymplectic flow, if $\sup_{[0, \infty)}\int_M |\TT|^{2p}\mu_{\underline\omega(t)}<\infty$ for some $p\geqslant2$, then 
	$$\int_M |\TT|^{2q} \to 0, \;\; \forall q\in [1, p)$$
	In particular, if $\sup_{M\times [0, \infty)}|\TT|^2<\infty$, then 
	$$\int_M |\TT|^{2q} \to 0, \;\; \forall q\in [1, \infty)$$
\end{lemma}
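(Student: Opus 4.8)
The plan is to exploit the gradient-flow structure of the flow to obtain integrability of $\int_M|\TT|^2$ in time, and then to reduce the general statement to the case $q=1$ by interpolation.

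\smallskip
\emph{Step 1 (time-integrability).} Recall from item~3 of \S\ref{background} that the $G_2$-Laplacian flow is the gradient flow of Hitchin's volume functional $\mathcal{V}$, and from item~1 that its stationary points are exactly the torsion-free structures; together with Bryant's pointwise volume-variation formula this yields $\frac{\dd}{\dd t}\mathcal{V}(\phi(t)) = c\int_X|\TT|^2\,\nu_\phi$ for a positive universal constant $c$. Since $\det Q=1$ the $\mathbb{T}^3$-fibres have constant volume, so $\int_X|\TT|^2\nu_\phi$ is a fixed multiple of $\int_M|\TT|^2\mu_{\underline\omega}$. The functional $\mathcal{V}$ is non-decreasing and, by the cohomological bound \eqref{volume-bound}, uniformly bounded above; hence it converges and
\[
\int_0^\infty \Big(\int_M |\TT|^2\,\mu_{\underline\omega(t)}\Big)\,\dd t < \infty .
\]

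\smallskip
\emph{Step 2 (interpolation).} For $1\le q<p$ the interpolation inequality applied to the nonnegative function $|\TT|^2$ gives
\[
\int_M |\TT|^{2q}\,\mu_{\underline\omega}
\le
\Big(\int_M|\TT|^2\,\mu_{\underline\omega}\Big)^{\frac{p-q}{p-1}}
\Big(\int_M|\TT|^{2p}\,\mu_{\underline\omega}\Big)^{\frac{q-1}{p-1}} .
\]
Under the hypothesis the second factor is uniformly bounded, so it suffices to prove $\int_M|\TT|^2\mu_{\underline\omega}\to 0$. The ``in particular'' case then follows, since a pointwise bound on $|\TT|^2$ bounds $\int_M|\TT|^{2p}$ for every $p$.

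\smallskip
\emph{Step 3 (genuine limit, and the main obstacle).} Put $h(t)=\int_M|\TT|^2\mu_{\underline\omega(t)}\ge 0$; by Step~1, $h\in L^1(0,\infty)$, and I would upgrade this to $h(t)\to 0$ by a Barbalat-type argument: a nonnegative $L^1$ function whose derivative is bounded above must tend to $0$. To bound $h'$ from above I differentiate under the integral using the Lotay--Wei evolution equation for the torsion (equivalently for $\RR=-|\TT|^2$): the term $\Delta|\TT|^2$ integrates to zero because $\mu_{\underline\omega}=\dvol_{g_{\underline\omega}}$, the gradient term $-2|\bm\nabla\TT|^2$ has the favourable sign and is discarded, and the remaining reaction terms together with the volume contribution $\int_M|\TT|^4$ are bounded by $C\int_M(|\TT|^4+|\TT|^2|\RRm|)$. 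The first integral is uniformly controlled by H\"older (using $p\ge 2$ and \eqref{volume-bound}), while $\int_M|\TT|^2|\RRm|\le(\int_M|\TT|^4)^{1/2}(\int_M|\RRm|^2)^{1/2}$. The decisive remaining input is thus a uniform bound $\sup_t\int_M|\RRm|^2<\infty$, which is precisely the energy estimate of Lemma~\ref{energy-bound} (obtained from inequality~\eqref{EvolvingZquantity} and Chern--Gauss--Bonnet). The main obstacle is that this estimate was established under the stronger hypotheses $\tr Q<K$ and $|\TT|^2<\beta/2$; I would either check that the integrated differential inequality \eqref{EvolvingZquantity} still closes under the weaker $L^{2p}$-hypothesis on $\TT$, or simply note that in the two settings where this lemma is used---the convergence Theorems~\ref{first-convergence} and~\ref{second-convergence}---both $\tr Q$ and $|\TT|^2$ are uniformly bounded, so that the energy bound and hence the Barbalat argument apply directly.
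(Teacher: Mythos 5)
Your overall strategy---time-integrability of $\int_M|\TT|^2\mu_{\underline\omega(t)}$ from the monotone, cohomologically bounded volume, a Barbalat-type upgrade to a genuine limit, and interpolation against the $L^{2p}$ hypothesis---is exactly the paper's, and Steps 1 and 2 are fine. The gap is in Step 3: you discard the favourable quadratic term in the evolution equation and then bound the remaining reaction terms by $C\int_M(|\TT|^4+|\TT|^2|\RRm|)$, which forces you to invoke a uniform bound on $\int_M|\RRm|^2$. That input is not available under the hypotheses of the lemma: the energy estimate of Lemma~\ref{energy-bound} requires uniform pointwise bounds on $\tr Q$ and $|\TT|^2$, neither of which follows from $\sup_t\int_M|\TT|^{2p}\mu_{\underline\omega(t)}<\infty$ (nor does $\tr Q<K$ follow even in the ``in particular'' case), and it is not at all clear that inequality~\eqref{EvolvingZquantity} closes under only an $L^{2p}$ torsion bound. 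Your fallback of restricting to the settings of Theorems~\ref{first-convergence} and~\ref{second-convergence} would salvage the applications but proves a weaker statement than the lemma.

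The missing idea is that no curvature bound is needed at all. In the evolution equation~\eqref{heat-norm-T2} the good term is $-2|\RRic|^2$, and the dangerous cross term $-4\RR^{ab}\TT_a^{\phantom{a}c}\TT_{cb}$ should be absorbed into it by completing the square:
\[
-2|\RRic|^2-4\RR^{ab}\TT_a^{\phantom{a}c}\TT_{cb}
=-2\left|\RRic_{ab}+\TT_a^{\phantom{a}c}\TT_{cb}\right|^2+2\left|\TT_a^{\phantom{a}c}\TT_{cb}\right|^2
\leqslant 2|\TT|^4 .
\]
Together with the $\tfrac{2}{3}|\TT|^4$ contributions (from the reaction term and from $\del_t\mu=\tfrac{2}{3}|\TT|^2\mu$), and since the divergence term integrates to zero, this gives $f'(t)\leqslant C\int_M|\TT|^4\mu_{\underline\omega(t)}$, which is uniformly bounded by H\"older using $p\geqslant 2$ and the volume bound~\eqref{volume-bound}. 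With that one change your Barbalat argument goes through under the stated hypotheses, and the rest of your write-up is the paper's proof.
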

\begin{proof}
We recall the evolution equations for $|\TT|^2$ and $\mu$ (derived in \cite{LW1}):
\begin{align}
\del_t \mu 
	&= 
		\frac{2}{3} |\TT|^2 \mu\\
\del_t |\TT|^2
	&=
		\Delta |\TT|^2
		+
		4 \boldsymbol{\nabla}^b \boldsymbol{\nabla}^a
			(\TT_a^{\phantom{a}c}\TT_{cb})
		-
		2|\bf{Ric}|^2
		+
		\frac{2}{3}|\TT|^4
		-
		4 \bf{R}^{ab}\TT_a^{\phantom{a}c}\TT_{cb} \label{heat-norm-T2}
\end{align}
To ease notation, let $f(t)=\int_{M} |\TT|^2\mu_{\underline\omega(t)}$. The evolution equation for the volume form gives 
\begin{equation}
f'(t)=\int_M (\partial_t -\Delta)|\TT|^2 + \frac{2}{3}|\TT|^4
\label{f-deriv}
\end{equation}
Now applying \eqref{heat-norm-T2}, and completing the square on the terms involving $\bf{Ric}$, it follows that the uniform upper bound of $\int_M |\TT|^4$ implies a uniform upper bound 
\[
f'(t)
 \leqslant C
 \]
(Note  the term in \eqref{heat-norm-T2} involving derivatives of $\TT$ is a divergence and so vanishes upon integration.) Meanwhile, 
\[
\int_0^T f(t)\, \diff t
=
\Vol(T) - \Vol(0)
\]
and since the $\Vol(T) \leqslant V_{\max}$ is uniformly bounded \eqref{volume-bound} it follows that  $f \in L^1[0,\infty)$. Now since $f(t) \geqslant 0$ and has bounded derivative, it follows that $f(t) \to 0$ as $t\to \infty$. The stated results are then a consequence of H\"older's inequality. 
\end{proof}

\begin{proof}[Proof of Theorem \ref{first-convergence}]
	
	By the Shi-type estimates of \cite{LW1}, the condition that $\Lambda$ is uniformly bounded on $[0, \infty)$ actually implies the uniform bound for 
	\begin{equation}
	\sup_{M\times [0, \infty)} (|\bm\nabla^k\RRm|^2+|\bm\nabla^{k+1}\TT|^2)^\frac{1}{2}\leqslant C_k
	\end{equation}
	for all $k$. 
	
	Lemma \ref{T-to-zero} implies in this situation $\int_{M} |\TT|^2\mu_{\underline\omega(t)} \to 0$ as $t \to \infty$. 
	
	We now invoke the topological assumption $\chi(M)\neq 0$. We claim that it means we can find $p_t\in M$ for all $t\in [0,\infty)$ such that 
	\begin{equation}\label{somewhere-noncollapse}
	\text{inj} (M, g_{\underline\omega(t)}, p_t)\geqslant\epsilon_0
	\end{equation}
	for some positive constant $\epsilon_0$ depending only on $C$, the global upper bound of the Riemannian curvature. If not, Cheeger--Gromov's Decomposition Theorem \cite[Theorem 0.1]{CG} assures the existence of an $F$-structure of positive rank on $M$, which is in contradiction with $\chi(M)\neq 0$.

Next, we claim that 
	\begin{equation}
	\sup_{[0, \infty)}\diam(M, g_{\underline\omega(t)})<\infty
	\end{equation}
To see this we argue by contradiction. If there exists a sequence $t_i\to \infty$ with $\diam(M, g_{\underline\omega(t_i)})\to \infty$, then the condition \eqref{somewhere-noncollapse} implies	

$$({M}, {g}_{\underline\omega(t_i)}, \underline\omega({t_i}), p_i)\
	\xrightarrow{C^\infty\;\; \text{Cheeger--Gromov}}
	({M}_\infty, g_{\underline{\omega}_\infty}, \underline\omega_\infty, p_\infty)$$
(This follows similar reasoning as in Section~\ref{survey-FY} with \eqref{somewhere-noncollapse} replacing Chen's non-collapsing Theorem.) 
	
	The convergence is in the following sense. There is an exhaustion of $M_\infty =\cup_m\Omega_m$ and a sequence of diffeomorphisms $F_m: \Omega_m\to M$ such that 
	$$F_m^*(g_{\underline\omega(t_i)}, \underline{\omega}(t_i)) \xrightarrow{C^{l}} (g_{\underline{\omega}_\infty}, \underline{\omega}_\infty)$$
	on any relatively compact subest $\Omega\subset\subset M_\infty$ and for any $l\in \mathbb{N}$. 
	
	The limit hypersymplectic structure is complete and satisfies
	\begin{itemize}
		\item $\diam({M}_\infty, g_{\underline{\omega}_\infty})=\infty$;
		\item $\Vol({M}_\infty, g_{\underline{\omega}_\infty})\leqslant \lim_{i\to \infty} \int_{{M}} \mu_{\underline\omega(t_i)} < \infty$;
		\item $\int_{{M}_\infty} |\TT_\infty |^2\mu_{\underline{\omega}_\infty}\leqslant \lim_{i\to \infty}\int_{{M}}|\TT(\phi(t_i))|^2\mu_{\underline\omega(t_i)} = 0$. 
	\end{itemize}
	
	This implies the closed $G_2$-structure $\phi_\infty$ associated with the limit hypersymplectic structure $\underline\omega_\infty$ is torsion free and so, in particular, is Ricci-flat. Since $g_{\underline{\omega}_\infty}$ on $M_\infty$ is complete, the metric on $M_\infty\times \mathbb{T}^3$ of the form $g_{\phi_\infty}=g_{\underline\omega_\infty}\oplus Q_\infty$ is also complete. Moreover, $(M_\infty\times\mathbb{T}^3, g_{\phi_\infty})$ has finite volume. 
	This is a contradiction since a non-compact complete Ricci-flat $7$-manifold has infinite volume by Calabi and Yau's volume growth lower bound. (We note in passing that this contradiction could also be derived directly on the 4-manifold via Theorem \ref{torsion free-hyperkahler} stating that torsion free hypersymplectic structure on $4$-manifold whose metric is complete must be Ricci-flat.)
	
	Thus under the assumption of the bound of $\Lambda$, the diameter is uniformly bounded from above and $(M, g_{\underline\omega(t)}, \underline\omega(t))$ is uniformly noncollapsing by the Bishop--Gromov Comparison theorem. 
	
	Take any sequence $t_i\to \infty$, the compactness results outlined in Section~\ref{survey-FY} imply that there is a subsequence $t_{i_k}$, a hyperk\"ahler structure $(M,\underline{\omega}_\infty)$ and a sequence of diffeomorphisms 
$F_k: M\to M$
such that 
\[
F_k^*(\underline{\omega}(t_{i_k})) \xrightarrow{C^\infty} \underline{\omega}_\infty, \;\; \text{ as }k\to \infty \qedhere
\]
\end{proof}


\begin{proof}[Proof of Theorem \ref{second-convergence}]

	If $\diam(M, g_{\underline\omega(t)})$ is bounded above by $D$ and Ricci curvature is uniformly bounded from below, then Bishop--Gromov Comparison Theorem implies non-collapsing, i.e.
	\begin{equation}\label{global-noncollapsing}
	\Vol_{g_{\underline\omega(t)}}
	\left( B_{g_{\underline\omega(t)}}(p, r) \right)
		\geqslant
			\kappa r^4, \; \forall r \in [0, D], \; p\in M, \; t\in [0, \infty)
	\end{equation}
	for some $\kappa$ depending only on $D, C$. We will show that $\Lambda$ is uniformly bounded for the family on $[0,\infty)$. If it were not, we could pick $t_i$ to be such that $\Lambda$ achieves the maximum on $[0, t_i]$ at some point $p_i$ on the time $t_i$-slice. Then a parabolic rescaling of the flow based at $(p_i, t_i)$ by a factor $\Lambda(p_i, t_i)$ will lead to a Cheeger--Gromov limit flow of hypersymplectic structrures which is Ricci flat and thus static (Here we need the global bound of $|\TT|^2$ and $\tr Q$ to ensure the convergence on the $4$-manifold).  Lotay-Wei showed the uniform bounds of
	$(|\bm\nabla^k \RRm |^2 + |\bm\nabla^{k+1}\TT|^2)^\frac{1}{2}$ on $(-\infty, 0]$ for all $k$ for the rescaled sequence of flows, we thus have a smooth convergence of $t=0$. The limit is now hyperk\"ahler with Euclidean volume growth (by the noncollapsing condition \ref{global-noncollapsing}) and $||\Rm||_{L^2}$ finite by Lemma \ref{energy-bound}, and thus must be an ALE gravitational instanton classified by Kronheimer. We get a contradiction just as in Section~\ref{survey-FY} and the proof of the extension result in \cite[Section 6]{FY}.
	
	From here, the result follows from the previous convergence theorem.
\end{proof}

\bigskip

{ \footnotesize
	\noindent\textsc{J.~Fine},\\ D\'epartement de math\'ematiques, Universit\'e libre de Bruxelles, Brussels 1050, Belgium.\\ \textit{E-mail addresses:} \texttt{\href{mailto:joel.fine@ulb.ac.be}{joel.fine@ulb.ac.be}}
	}
\bigskip
	
{\footnotesize	
	\noindent\textsc{C.~Yao},\\ Institute of Mathematical Sciences, ShanghaiTech University, 393 Middle Huaxia Road, Pudong, 
	Shanghai, 201210 China.\\ \textit{E-mail addresses:}  \texttt{\href{mailto:yaochj@shanghaitech.edu.cn}{yaochj@shanghaitech.edu.cn}}
	}

\end{document}